\documentclass[twoside, openany ]{amsart}
\usepackage{amsfonts,amsmath, amssymb}
\usepackage[linktocpage=true, colorlinks=true, linkcolor=RoyalBlue, filecolor=RoyalBlue,urlcolor=RoyalBlue,citecolor=RoyalBlue,hypertexnames=false]{hyperref}
\usepackage{aliascnt}
\usepackage{varioref}
\usepackage{graphicx}
\usepackage{aligned-overset}
\usepackage{float}
\usepackage{srcltx}
\usepackage[all]{xy}
\usepackage{bbm}
\usepackage[T2A, T1]{fontenc}
\usepackage[utf8]{inputenc}	
\usepackage{CJKutf8}
\usepackage{adjustbox}
\usepackage[dvipsnames]{xcolor}
\usepackage[normalem]{ulem}
\usepackage{bm}
\usepackage{pifont}
\usepackage{latexsym}
\usepackage{stmaryrd}
\usepackage[2emode]{psfrag}
\usepackage{yhmath}
\usepackage{array}
\usepackage{dsfont}
\usepackage{mathrsfs}% for \mathscr
\usepackage{tikz-cd}% diagrams
\usepackage{comment}
\usepackage{subcaption}
\usetikzlibrary { decorations.pathmorphing, decorations.pathreplacing, decorations.shapes }
\usetikzlibrary{graphs}
\usepackage{maybemath}
\usepackage[capitalize,noabbrev]{cleveref}
\crefname{section}{\S$\!\!$}{\S\S$\!\!$}
\Crefname{section}{Section}{Sections}
\crefname{subsection}{\S$\!\!$}{\S\S$\!\!$}
\Crefname{subsection}{Section}{Sections}
\crefname{subsubsection}{\S$\!\!$}{\S\S$\!\!$}
\Crefname{subsubsection}{Section}{Sections}
\DeclareSymbolFont{cyrillic}{T2A}{cmr}{m}{n}
\DeclareMathSymbol{\Zh}{\mathalpha}{cyrillic}{198}
\DeclarePairedDelimiter{\points}{<}{>}

\DeclareMathOperator*{\bigast}{\raisebox{-7pt}{\scalebox{2}{*}}\kern -2pt}

\DeclareMathOperator{\otb}{\bar{\otimes}}

\definecolor{Maroon}{rgb}{0.6 0 0}
\definecolor{Prussian}{rgb}{0.05 0 0.6}
\definecolor{Emerald}{rgb}{0 0.5 0.1}
\newtheoremstyle{mytheorem}%
{10.0pt plus 2.0pt minus 2.0pt} % Space above
{10.0pt plus 2.0pt minus 2.0pt} % Space below
{\itshape} % Body font
{} % Indent amount
{\sc} % Theorem head font
{.} % Punctuation after theorem head
{ } % Space after theorem head
{} % Theorem head spec (can be left empty, meaning `normal')

\newtheoremstyle{mydefinition}%
{10.0pt plus 2.0pt minus 2.0pt} % Space above
{10.0pt plus 2.0pt minus 2.0pt} % Space below
{} % Body font
{} % Indent amount
{\sc} % Theorem head font
{.} % Punctuation after theorem head
{ } % Space after theorem head
{} % Theorem head spec (can be left empty, meaning `normal')

\newtheoremstyle{myexample}%
{10.0pt plus 2.0pt minus 2.0pt} % Space above
{10.0pt plus 2.0pt minus 2.0pt} % Space below
{\small} % Body font
{} % Indent amount
{\sc} % Theorem head font
{.} % Punctuation after theorem head
{ } % Space after theorem head
{} % Theorem head spec (can be left empty, meaning `normal')

\newtheoremstyle{myremark}%
{10.0pt plus 2.0pt minus 2.0pt} % Space above
{10.0pt plus 2.0pt minus 2.0pt} % Space below
{} % Body font
{} % Indent amount
{\itshape} % Theorem head font
{.} % Punctuation after theorem head
{ } % Space after theorem head
{} % Theorem head spec (can be left empty, meaning `normal')

\theoremstyle{mytheorem}
\newtheorem{theorem}{Theorem}[section]
\newaliascnt{lemma}{theorem}
\newtheorem{lemma}[lemma]{Lemma}
\aliascntresetthe{lemma}
\newaliascnt{conjecture}{theorem}

\aliascntresetthe{conjecture}
\newaliascnt{corollary}{theorem}
\newtheorem{corollary}[corollary]{Corollary}
\aliascntresetthe{corollary}
\newaliascnt{proposition}{theorem}
\newtheorem{proposition}[proposition]{Proposition}
\aliascntresetthe{proposition}

\theoremstyle{myremark}
\newaliascnt{remark}{theorem}
\newtheorem{remark}[remark]{Remark}
\aliascntresetthe{remark}
\newaliascnt{convention}{theorem}

\aliascntresetthe{convention}
\newaliascnt{notation}{theorem}

\aliascntresetthe{notation}

\theoremstyle{mydefinition}
\newaliascnt{definition}{theorem}
\newtheorem{definition}[definition]{Definition}
\aliascntresetthe{definition}
\newaliascnt{problem}{theorem}
\newtheorem{problem}[problem]{Problem}
\aliascntresetthe{problem}
\newaliascnt{question}{theorem}

\aliascntresetthe{question}

\theoremstyle{myexample}
\newaliascnt{example}{theorem}
\newtheorem{example}[example]{Example}
\aliascntresetthe{example}
\newaliascnt{counterexample}{theorem}

\aliascntresetthe{counterexample}

\newtheoremstyle{myzusatz}
 {10.0pt plus 2.0pt minus 2.0pt} % Space above
{10.0pt plus 2.0pt minus 2.0pt} % Space below
{\itshape} % Body font
{} % Indent amount
{\sc} % Theorem head font
{.} % Punctuation after theorem head
{ } % Space after theorem head
{\thmname{#1}\thmnumber{ #2}\thmnote{ #3}}

\theoremstyle{myzusatz}

\newaliascnt{zusatz}{theorem}

\aliascntresetthe{zusatz}

\definecolor{gray1}{gray}{0.8}
\definecolor{gray2}{gray}{0.6}
\definecolor{gray3}{gray}{0.4}
\definecolor{gray4}{gray}{0.2}
\allowdisplaybreaks
\newcommand{\id}{\mathrm{id}}

\newcommand{\Mm}{\mathscr{M}}

\newcommand{\Nn}{\mathscr{N}}

\newcommand{\Ii}{\mathscr{I}}

\newcommand{\Gg}{\mathscr{G}}

\newcommand{\Hh}{\mathscr{H}}
\newcommand{\Kk}{\mathscr{K}}

\newcommand{\Uu}{\mathscr{U}}

\newcommand{\ot}{\otimes}

\newcommand{\set}[1]{\lbrace #1\rbrace}

\def\Set{{\sf Set}}

\def\SKB{{\sf SKB}}
\def\QSKB{{\sf QSKB}}

\def\Gpd{{\sf Gpd}}
\def\Gpdconn{{\sf Gpd}_{\mathrm{conn}}}
\def\GpdSchur{{\sf Gpd}_{\mathrm{Schur}}}
\def\Gp{{\sf Gp}}

\def\Quiv{{\mathsf{Quiv}}}

\def\source{\mathfrak{s}}
\def\target{\mathfrak{t}}
\def\Source{\mathfrak{S}}
\def\Target{\mathfrak{T}}

\DeclareMathOperator{\Aut}{\mathrm{Aut}}

\newcommand{\Z}{\mathbb{Z}}

\newcommand{\blank}{\raisebox{-2pt}{\text{---}}}

\newcommand{\TT}{\mathrm{T}}

{
\left\{\begin{aligned}}
{\end{aligned}
\right.
}
\newcommand{\St}{\mathrm{St}}

\newcommand{\One}{\mathbbm{1}}
\DeclareMathOperator{\im}{\mathrm{im}}

\begin{document}
\hyphenation{qua-si-grou-po-id}
\hyphenation{qua-si-grou-po-ids}
\hyphenation{grou-po-id}
\hyphenation{grou-po-ids}
\hyphenation{oid-i-fi-ca-tion}
\hyphenation{a-quo-id}
\hyphenation{a-quo-ids}

\def\presub#1#2#3%
  {\mathop{}%
   \mathopen{\vphantom{#2}}_{#1}%
   \kern-\scriptspace%
   {#2}_{#3}}

\newcommand{\fibre}[2]{\presub{#1}{\times}{#2}}

\newcommand{\bicross}[2]{\presub{#1}{{\blacktriangleright\kern -3pt \blacktriangleleft}}{#2}}

\newcommand{\lcross}[2]{\presub{#1}{{\blacktriangleright\kern -3pt <}}{#2}}

\newcommand{\rcross}[2]{\presub{#1}{{> \kern -3pt \blacktriangleleft}}{#2}}

\newcommand{\twfibre}[2]{\presub{#1}{\,\protect\scalebox{0.65}[1]{\protect\ensuremath{\bowtie}}\,}{#2}}

\newcommand{\zappaszep}[2]{\presub{#1}{\,\bowtie\,}{#2}}

\newcommand{\lfibre}[2]{\presub{#1}{\rtimes}{#2}}

\newcommand{\rfibre}[2]{\presub{#1}{\ltimes}{#2}}

\newcommand{\twomapsright}[2]{\,\underset{#2}{\overset{#1}{\rightrightarrows}} \, }
%\title{Unearthing the lost Isomorphism Theorem}
\title{On quiver skew braces, their ideals and products}
\author{Davide Ferri}
\begin{abstract} \textit{Quiver skew braces} or \textit{skew bracoids} are equivalent to braided groupoids, that is, groupoids with a constraint of abelianity. They are the quiver-theoretic version of skew braces, an increasingly studied structure lying in the intersection of group and ring theory.
	
In this paper, we define ideals and quotients for quiver skew braces, with respect to two notions of morphisms. Following the track of a previous work of ours (2025), we define a classical semidirect product \textit{à la} Brown, and a categorical semidirect product \textit{à la} Bourn and Janelidze, for the category of quiver skew braces.
	
It is known that connected groupoids can be expressed as the datum of a group of loops and a set of vertices. We demonstrate how no such decomposition holds for quiver skew braces, which makes their theory richer than the theory of groupoids.
\end{abstract}
\address{%
\parbox[b]{0.9\linewidth}{University of Turin, Department of Mathematics `G.\@ Peano',\\ via
 Carlo Alberto 10, 10123 Torino, Italy.\\
 Vrije Universiteit Brussel, Department of Mathematics and Data Science,\\ Pleinlaan 2, 1050, Brussels, Belgium.}}
 \email{d.ferri@unito.it, Davide.Ferri@vub.be}
\keywords{Groupoid, Quiver Skew Brace, Skew Bracoid, Braided Groupoid, Skew Brace, Ideal, Heap, Semidirect Product, Split Epimorphism, Split Lemma.}
\subjclass[2020]{Primary 16T25; Secondary 20L05}
\maketitle
\tableofcontents 

\section*{Introduction}\label{chapt:intro}
This paper is conceived as a direct sequel of \cite{ferri2025splitandFIT}, and its investigation proceeds in the tracks of \cite{BaiGuoShengWang2025braideddynamicalgroups,ferri2024dynamical,sheng2024postgroupoidsquivertheoreticalsolutionsyangbaxter}. While in \cite{ferri2025splitandFIT} we studied the category of groupoids, here we consider the category of \textit{braided groupoids} (equivalently, what we call \textit{quiver skew braces}, and Sheng, Tang, and Zhu called \textit{skew bracoids}): we define their ideals and quotients, a `classical' semidirect product \textit{à la} Brown, and a categorical semidirect product in the sense of Bourn and Janelidze \cite{BournJanelidzeProtomodularityDescSemProd}. Moreover, using these products, we are able to characterise those quiver skew braces whose subgroupoid of loops is an ideal.

A \textit{braided group} is a group $G$ with a constraint of abelianity $r\colon G\times G\to G\times G$, called a \textit{braiding}, that satisfies properties analogous to the canonical flip $a\times b\mapsto b\times a $ on abelian groups \cite{LYZ}. Indeed, the canonical flip on $G$ is a braiding if and only if $G$ is abelian.

A set is a quiver with a single vertex. Following this idea, the philosophy of `oidification' is concerned with turning set-theoretic (i.e.\@, one-vertex) concepts into quiver-theoretic (multiple-vertices) ones. It is natural to ask for an oidification of braided groups. A \textit{braided groupoid} is a groupoid $\Gg$ with a morphism of quivers $r\colon \Gg\ot \Gg\to \Gg\ot \Gg$, also called a \textit{braiding}, that satisfies analogous axioms \cite{andruskiewitsch2005quiver}. Here $\ot$ denotes the tensor product of quivers over the same set of vertices $\Gg^0$ \cite{matsumotoshimizu}. Interestingly, though, this time there is no sensible notion of an `abelian groupoid' to generalise: if $a$ and $b$ are consecutive arrows, $b$ and $a$ need not be, so $ba$ is generally undefined. For the same reason, the canonical flip on a groupoid is, usually, not even a map of quivers. It is easy to get convinced that the canonical flip on $\Gg$ is a map of quivers if and only if $\Gg$ is a group, and that an abelian groupoid is forced to be just an abelian group. The notion of braiding is thus, in some sense, `more natural' than the notion of commutativity---at least in the context of quivers.

Braided groups are equivalent to \textit{skew braces}. A skew brace is a set with two group operations, satisfying a compatibility \cite{guarnieri2017skew,rump2007braces}. These objects have been classically perceived as having the flavour of a ring (radical rings yield, in fact, examples of skew braces \cite{rump2007braces}), hence they are often handled through ring-theoretic techniques.

Likewise, braided groupoids are equivalent to what we call \textit{quiver skew braces}, which are an oidification of skew braces. These are groupoids $\Gg$ with an extra group structure on each outgoing star $\St_\Gg(\lambda)$. They were introduced by Sheng, Tang, and Zhu \cite{sheng2024postgroupoidsquivertheoreticalsolutionsyangbaxter} with the name \textit{skew bracoids}. Since a distinct object named \textit{skew bracoid} already existed (see Martin-Lyons and Truman \cite{martin2024skew}), we prefer to adopt a different name here, for the sake of clarity.\footnote{New evidence suggests that Martin-Lyons and Truman's skew bracoids and Sheng, Tang, and Zhu's skew bracoids might be closely connected, which provides further motivation to dissipate any confusion around these two terms. It should be noted that I.\@ Martin-Lyons and P.\@ Truman picked the name `skew bracoid' while already  envisioning that it should have had some quiver-theoretic interpretation. The name `quiver skew brace' was proposed by the author together with I.\@ Colazzo.}

In this paper, we investigate the basic theory of braided groupoids, or equivalently quiver skew braces, in line with what was done for groupoids in \cite{BrownBookGroupoids,ferri2025splitandFIT,MetereMontoliSemidirectIntGpd} and many other places. In \cref{sec:QSKB} we recall the notions of groupoids, braided groupoids, and quiver skew braces. In \cref{sec:ideals} we define two notions of ideals for quiver skew braces: \textit{ideals}, corresponding to kernels of morphisms; and \textit{ideal bundles} corresponding to the kernels of strong morphisms. We then study the respective notions of quotients.

Two notions of `semidirect product' have been studied for groupoids. One, which we call Brown's \textit{classical} semidirect products \cite{BrownBookGroupoids}, makes more sense geometrically. The other one, described in \cite{ferri2025splitandFIT}, is Bourn and Janelidze's \textit{categorical} notion of semidirect product, corresponding to split epimorphisms. In \cref{sec:products}, we extend both notions to quiver skew braces.

As observed by Brown \cite{GroupsToGroupoidsBrown}, a (connected) groupoid $\Gg$ can be simply identified with the datum of a set and a group: the set of vertices $\Gg^0$, and any of the (isomorphic) isotropy groups $\Gg_\lambda$, $\lambda\in\Gg^0$. The theory of braided groupoids or quiver skew braces, however, is significantly more complex in this aspect. The datum `set of vertices and isotropy group' is promoted to the datum `heap of vertices and isotropy skew brace': this time, though, not all quiver skew braces are obtained from the datum of a heap and a skew brace. In \cref{sec:harder}, we provide a counterexample. 

In other words, it is not immediate to rewrite quiver skew brace theory in a way that overrides the quiver-theoretic aspect. This prompts the study of decompositions of quiver skew braces into geometrically simpler objects, and the notion of an \textit{unprunable} quiver skew brace---that is, simple with respect to ideal bundles.
\subsection*{Notations and conventions} All notations will be compliant with those of \cite{ferri2025splitandFIT}. We denote quivers $Q$ by $Q = \left( Q^1\twomapsright{\source}{\target} Q^0 \right)$, where $\source$ and $\target$ are the source and target maps respectively. Subscripts such as $\source_Q, \target_Q$ are added when due. Morphisms of quivers are pairs $f =  (f^1,f^0)\colon Q\to R$ with $f^1\colon Q^1\to R^1$ and $f^0\colon Q^0\to R^0$, such that $\source_Rf^1 = f^0\source_Q$ and $\target_R f^1 = f^0\target_Q$. Strong morphisms $f = (f^1, \id_\Lambda)$ over $\Lambda$ are defined as in \cite{ferri2025splitandFIT}. A quiver is \textit{Schurian} if every ordered pair of vertices has at most one arrow connecting them (thus a Schurian quiver $Q$ is equivalent to a relation on $Q^0$). We use the following notations:

\begin{tabular}{ll}
	$\Quiv$ & Category of quivers with all morphisms.\\
	$\Quiv_\Lambda$ & Category of quivers over $\Lambda$ with strong morphisms.\\
	$\Gpd$ & Category of groupoids with groupoid morphisms.\\
	$\Gpd_\Lambda$& Category of groupoids over $\Lambda$ with strong morphisms.\\
	$\GpdSchur$ & Full subcategory of $\Gpd$ consisting of Schurian groupoids.\\
	$\Gpdconn$ & Full subcategory of $\Gpd$ consisting of connected groupoids.
\end{tabular}

For the arrows in $Q$ from $A\subseteq Q^0$ to $B\subseteq Q^0$, we use the notation $Q(A,B)$ as in \cite{ferri2025splitandFIT}, omitting the curly brackets when $A= \{\lambda\}$ or $B=\{\mu\}$ is a singleton, and we denote the set of loops $Q(\lambda,\lambda)$ by $Q_\lambda$. We also introduce the notation $\St_Q(\lambda)= Q(\lambda, Q^0)$ for the \textit{outgoing star} at $\lambda\in Q^0$, following Brown \cite{BrownFibrations}.

The category $\Quiv_\Lambda$ is monoidal, with monoidal product $\ot$ defined as follows (see \cite{matsumotoshimizu}):
\begin{gather*} (Q\ot R)^1 = \{ x\times y \in Q^1\times R^1 \mid \target(x)=\source(y) \},\quad (Q\ot R)^0 = \Lambda = Q^0 = R^0,\\
\source_{Q\ot R}(x\times y) = \source_Q(x),\quad \target_{Q\ot R}(x\times y)=\target_R(y).  \end{gather*}
We write $x\ot y$ for a pair $x\times y \in (Q\ot R)^1$ (we say that $x$ and $y$ are \textit{consecutive}).

As in \cite{ferri2025splitandFIT}, we use the \textit{Leibniz order} $fg= f\circ g$ when writing the composition of functions; but we use the \textit{anti-Leibniz} or \textit{diagrammatic order} for the composition of arrows in a groupoid: $xy$ is defined when $\target(x) = \source(y)$. This is consistent with the usual notations in topology for the concatenation of continuous paths.
\section{Groupoids and braided groupoids}\label{sec:QSKB}
\subsection{Groupoids} A \textit{groupoid} is a quiver $\Gg = \left( \Gg^1\twomapsright{\source}{\target}\Gg^0\right)$ endowed with a multiplication $m\colon \Gg\ot \Gg\to \Gg$, $m(x\ot y)= xy$, defined only for consecutive arrows, and with a family $\One_{\Gg^0} = \{1_\lambda\mid \lambda \in \Gg^0\}$ of loops, such that:
\begin{enumerate}
	\item $m(\id\ot m) = m(m\ot \id)$ (associativity);
	\item $1_\lambda$ is a loop on $\lambda$, satisfying $x1_\lambda = x$ and $1_\lambda y = y$ for all $x\in \Gg(\Gg^0,\lambda)$ and $y\in \Gg(\lambda,\Gg^0)$ (unit loops);
	\item for all $x\in \Gg^1$ there exists $x^{-1}\in \Gg^1$ such that $xx^{-1}= 1_{\source(x)}$ and $x^{-1}x = 1_{\target(x)}$ (inverses).
\end{enumerate}
A morphism of groupoids $f=(f^1, f^0 )\colon \Gg\to \Hh$ is a morphism of quivers satisfying $f^1(xy)= f^1(x)f^1(y)$ for all $x\ot y\in (\Gg\ot \Gg)^1$. A morphism of groupoids is strong (over $\Lambda=\Gg^0=\Hh^0$) if $f^0 = \id_\Lambda$.

\subsection{Heaps} Our approach in this paper pivots around the notion of \textit{heap}, o\-ri\-gi\-na\-ted from the work of Prüfer \cite{prufer1924theorie} (abelian case) and Baer \cite{baer1929einfuhrung}. Despite heaps containing roughly the same information as groups, the language of heaps can be more effective than the language of groups---or vice versa---for specific purposes. 

We follow the exposition of Brzezi\'nski \cite{BRZEZINSKITrussesParagons}.
\begin{definition}
	A \textit{heap} is a set $X$ equipped with a ternary operation $\points*{\blank,\blank,\blank}$ satisfying the \textit{associativity condition}
	\[  \points*{\points*{a,b,c},d,e}= \points*{a,b,\points*{c,d,e}} \]
	and the \textit{Mal'tsev conditions}
	\[  \points*{a,b,b}=a,\quad \points*{a,a,b}=b \]
	for all $a,b,c,d,e\in X$.
\end{definition}
It is often said that heaps are the `affine version' of groups. The following well-known fact explains this viewpoint.
\begin{proposition}
	Let $(G,+)$ be a group (not necessarily abelian), which we denote additively. Then $\points*{a,b,c}= a-b+c$ is a heap structure.
	
	Let $(G,\points*{\blank,\blank,\blank})$ be a heap. For all $b\in G$, then, $a +_b c:= \points*{a,b,c}$ is a group operation, with neutral element $b$.
	
	The two above connections are mutually inverse.
\end{proposition}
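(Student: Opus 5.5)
The plan is to check the two constructions directly against the axioms and then compose them both ways; every step is a short computation.

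\emph{From groups to heaps.} Given a group $(G,+)$, set $\points*{a,b,c}=a-b+c$. Associativity of $\points*{\blank,\blank,\blank}$ follows from associativity of $+$: both $\points*{\points*{a,b,c},d,e}$ and $\points*{a,b,\points*{c,d,e}}$ equal $a-b+c-d+e$. The Mal'tsev conditions follow from $-b+b=0=b-b$, which gives $a-b+b=a$ and $a-a+b=b$. Commutativity is never used, so the argument works for non-abelian groups.

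\emph{From heaps to groups.} Fix $b$ and set $a+_b c=\points*{a,b,c}$.
\begin{itemize}
\item Associativity is exactly the heap associativity axiom with the middle entries both equal to $b$: $(a+_b c)+_b e=\points*{\points*{a,b,c},b,e}=\points*{a,b,\points*{c,b,e}}=a+_b(c+_b e)$.
\item The element $b$ is neutral by the two Mal'tsev identities $\points*{a,b,b}=a$ and $\points*{b,b,c}=c$.
\item For inverses, the candidate is $-_b a:=\points*{b,a,b}$. Then $a+_b(-_b a)=\points*{a,b,\points*{b,a,b}}$. By associativity this equals $\points*{\points*{a,b,b},a,b}=\points*{a,a,b}=b$. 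Symmetrically, $\points*{\points*{b,a,b},b,a}=\points*{b,a,\points*{b,b,a}}=\points*{b,a,a}=b$.
\end{itemize}
Verifying the inverse is the only step needing any insight: one has to guess $\points*{b,a,b}$ and apply associativity in the right direction.

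\emph{Mutual inverseness.} I read the statement as saying that the two passages are inverse bijections between group structures on $G$ and pairs (heap structure on $G$, chosen point $b$).
\begin{itemize}
\item Group to heap to group, taking $b=0$: $a+_0 c=a-0+c=a+c$, so the original group is recovered.
\item Heap to group to heap: the heap operation of $+_b$ is $a-_b c+_b d=\points*{\points*{a,\points*{b,c,b},b},b,d}$, and we must show it equals $\points*{a,c,d}$. First, $\points*{\points*{a,\points*{b,c,b},b},b,d}=\points*{a,\points*{b,c,b},\points*{b,b,d}}=\points*{a,\points*{b,c,b},d}$.
\end{itemize}
This leaves the identity $\points*{a,\points*{b,c,b},d}=\points*{a,c,d}$. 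It needs the standard derived heap identity $\points*{a,\points*{b,c,e},d}=\points*{\points*{a,e,c},b,d}$. Brzezi\'nski \cite{BRZEZINSKITrussesParagons} shows that this identity follows from the axioms.

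I would prove it as follows. Set $u=\points*{b,c,e}$. Then
\[\points*{\points*{a,e,c},b,u}=\points*{a,e,\points*{c,b,\points*{b,c,e}}}=\points*{a,e,\points*{\points*{c,b,b},c,e}}=\points*{a,e,e}=a.\]
Hence $\points*{a,u,d}=\points*{\points*{\points*{a,e,c},b,u},u,d}=\points*{\points*{a,e,c},b,\points*{u,u,d}}=\points*{\points*{a,e,c},b,d}$.

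Applying this with $e=b$ gives $\points*{a,\points*{b,c,b},d}=\points*{\points*{a,b,c},b,d}=\points*{a,b,\points*{c,b,d}}$. Writing $+=+_b$ and $-=-_b$ (the group just constructed), the right-hand side is $a+(c+d)$ and the left-hand side is $a+(-c)+d$. These agree only when $c=-c$, so for a general heap this step fails as written.

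\textbf{Main obstacle and the fix.} The plan's own derived identity, as applied, has produced a contradiction, so the issue is not associativity. The error is in the very first step of the heap-to-group-to-heap check, where I used the right-associated identity $a-_b c=\points*{a,\points*{b,c,b},b}$. The correct move is to collapse $\points*{a,\points*{b,c,b},\blank}$ by the derived identity with $e=b$ but in its correct form, $\points*{a,\points*{b,c,b},d}=\points*{\points*{a,b,c},b,d}$ read with the group $+_b$. This must be rechecked. Concretely, I would redo the derivation: in $+_b$ we have $a-_b c=a+_b\points*{b,c,b}$, so $a-_bc+_bd=\points*{\points*{a,b,\points*{b,c,b}},b,d}$. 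By associativity, $\points*{a,b,\points*{b,c,b}}=\points*{\points*{a,b,b},c,b}=\points*{a,c,b}$. Then $\points*{\points*{a,c,b},b,d}=\points*{a,c,\points*{b,b,d}}=\points*{a,c,d}$.

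So the real difficulty is bookkeeping: bracket every ternary expression carefully so that the Mal'tsev identities can be applied. Done that way, no derived identity is needed at all, and the heap-to-group-to-heap composite is the identity.
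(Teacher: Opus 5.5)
Your final argument is correct. It is the standard direct verification, and the paper gives no proof of its own to compare it with: it quotes the statement as a well-known fact from Brzezi\'nski's exposition. Three pieces together settle everything:
\begin{enumerate}
\item the group-to-heap check;
\item the heap-to-group check, with inverse $-_b a=\points*{b,a,b}$;
\item the closing computation $\points*{\points*{a,b,\points*{b,c,b}},b,d}=\points*{\points*{a,c,b},b,d}=\points*{a,c,d}$.
\end{enumerate}
This includes your reading of ``mutually inverse'' as a bijection between groups and pointed heaps. The base point is recovered as the neutral element $b$ of $+_b$.

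You should delete the middle detour, because it contains false statements. The actual error is the identification $a-_bc=\points*{a,\points*{b,c,b},b}$. The correct expression is $a+_b(-_bc)=\points*{a,b,\points*{b,c,b}}$.

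Your own derived identity shows what $\points*{a,\points*{b,c,b},\blank}$ really is. With $e=d=b$ it gives $\points*{a,\points*{b,c,b},b}=\points*{\points*{a,b,c},b,b}=a+_bc$. With $e=b$ it gives $\points*{a,\points*{b,c,b},d}=(a+_bc)+_bd$. This is exactly $a-(-c)+d$, as group intuition predicts.

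So the left-hand side was never $a+(-c)+d$, and the $c=-c$ ``contradiction'' is an artefact of the misidentification. Nothing fails for a general heap. The sentence about collapsing by the derived identity ``read with the group $+_b$'' should go as well. Once the detour is removed, the para-associativity identity is not needed at all, as you yourself observed.
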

\subsection{Braided groupoids and quiver skew braces}\label{sec:BrGpdandQSKB} %This section summarises the interplay between braided groupoids and quiver skew braces. The entire content of this section can be found in  \cite{ferri2024dynamical,sheng2024postgroupoidsquivertheoreticalsolutionsyangbaxter}.

The following definition was given in explicit terms by Andruskiewitsch \cite{andruskiewitsch2005quiver}, but we reformulate it here compactly using \cite[Lemma 2.9 (a)]{andruskiewitsch2005quiver}. We borrow the term `semistrong action' from \cite{ferri2025splitandFIT}.
\begin{definition}[{\cite[Definition 2.8 and Lemma 2.9 (a)]{andruskiewitsch2005quiver}}]\label{braided_groupoid}
	A \emph{braided groupoid} is the datum of a groupoid $(\Gg,\cdot)$ and of two quiver morphisms $\rightharpoonup,\leftharpoonup\colon \Gg\ot \Gg\to \Gg$, a left and a right semistrong action respectively, satisfying 
	\begin{equation} \label{eq:braidedcomm} (a\rightharpoonup b) (a\leftharpoonup b)= ab  \end{equation}
	for all $a\ot b\in (\Gg\ot \Gg)^1$, and such that the map $a\ot b\mapsto (a\rightharpoonup b)\ot (a\leftharpoonup b)$ is a bijection $(\Gg\ot \Gg)^1\to (\Gg\ot \Gg)^1$.
\end{definition}
The morphism $r(a\ot b) = (a\rightharpoonup b) \ot (a\leftharpoonup b) $ is called a \textit{braiding} on $\Gg$. 	Because of \cite[Lemma 4.2]{ferri2025splitandFIT}, the actions $\rightharpoonup,\leftharpoonup$ can be handled with the same rules as the group actions, except that we have to make sure that every step is well-defined. With the same computations as in \cite[Theorem 2]{LYZ}, then, one can prove that two morphisms $\rightharpoonup,\leftharpoonup\colon \Gg\ot\Gg\to \Gg$ explicitly satisfy
	\begin{align}
		\label{bg1}& a\rightharpoonup bc = (a\rightharpoonup b) ((a\leftharpoonup b)\rightharpoonup c),\\
		\label{bg2}& ab\leftharpoonup c = (a \leftharpoonup (b \rightharpoonup c))(b \leftharpoonup c),\\
		&\label{bgun} a\rightharpoonup 1_{\target(a)} = 1_{\source(a)},\quad 1_{\source(a)}\leftharpoonup a = 1_{\target(a)}.
	\end{align}
	 for all $a\ot b\ot c$---in other words, $(\Gg,\Gg,\rightharpoonup,\leftharpoonup)$ is a matched pair of groupoids in the sense of \cite{andruskiewitsch2005quiver,mackenzie}; see \cite[Lemma 2.9 (a)]{andruskiewitsch2005quiver}.

The inverse of a braiding is also a braiding \cite[Lemma 2.10 (b)]{andruskiewitsch2005quiver}.

\begin{definition}\label{def:morphisms}
	Let $(\Gg, r)$ and $(\Hh, s)$ be braided groupoids. A (\textit{strong}) \textit{morphism} of braided groupoids $f\colon (\Gg,r)\to (\Hh,s)$ is a (strong) morphism of groupoids $f = (f^1 , f^0)$, satisfying the following condition: whenever $f^1(x)\ot f^1(y)$ is defined, $x\ot y$ is also defined, and
	\[  (f^1\times f^1)r^1(x\ot y) = s^1(f^1(x)\ot f^1(y)). \]
\end{definition}
\begin{remark}The definition of a morphism $f$ of braided groupoids implies, in particular, that $\im(f)$ is a subgroupoid of $\Hh$. Indeed, $xy$ is defined whenever $f^1(x)f^1(y)$ is defined, and one has $f^1(xy)= f^1(x)f^1(y)$, whence the fact that $\im(f)$ is closed under compositions. It is clearly closed under units and inverses.

It is important to recall that for a morphism of groupoids $f\colon \Gg\to \Hh$, in general $\im(f)$ might not be a subgroupoid of $\Hh$; see \cite{GroupsToGroupoidsBrown,TilsonCatAsAlg}.
\end{remark}
\begin{definition}\label{def:quasimorphisms}
	Let $(\Gg, r)$ and $(\Hh, s)$ be braided groupoids. A \textit{quasimorphism of braided groupoids} is a morphism of groupoids $f\colon \Gg\to \Hh$ such that
	\[  (f^1\times f^1)r^1(x\ot y) = s^1(f^1(x)\ot f^1(y)) \]
	whenever both sides are defined.
\end{definition}
\Cref{def:quasimorphisms} is weaker than \cref{def:morphisms} as it does not require that, if one side of the equation is defined, the other side is too. In principle, $f^1(x)\ot f^1(y)$ might be defined even though $x\ot y$ is not, and in such cases \cref{def:quasimorphisms} poses no condition on $s^1(f^1(x)\ot f^1(y))$.
\begin{remark}
	There exist quasimorphisms of braided groupoids that are not morphisms. We shall see such a specimen in \cref{ex:quasimorphism}, although some more theory needs to be built in preparation.
	
	Observe that a quasimorphism that is also strong must be a morphism: since $\target(f^1(x)) = \target(x)$ and $\source(f^1(y)) = \source(y)$, if $f^1(x)$ and $f^1(y)$ are consecutive then so are $x$ and $y$.
\end{remark}

The notion of \textit{quiver skew brace} represents the quiver-theoretic version (or `oidification') of Guarnieri and Vendramin's \textit{skew braces} \cite{guarnieri2017skew}. It was introduced by Sheng, Tang, and Zhu \cite{sheng2024postgroupoidsquivertheoreticalsolutionsyangbaxter} with the name \textit{skew bracoid}, which was however already in use (see Martin-Lyons and Truman \cite{martin2024skew}). There is evidence of a close connection between quiver skew braces and Martin-Lyons and Truman's skew bracoids: thus we try to avoid any confusion by provisionally adopting the name `quiver skew braces'.\footnote{We also recall that our paper \cite{ferri2024dynamical} contains a definition of \textit{quiver-theoretic skew braces}, slightly more general than the quiver skew braces defined here: that notion probably needs renaming too.}
\begin{definition}
	A \textit{quiver skew brace} $(\Gg, \{+_\lambda\}_{\lambda\in\Gg^0}, \cdot)$ is a groupoid $(\Gg,\cdot)$ together with, for every vertex $\lambda\in \Gg^0$, a (non necessarily abelian) group operation $+_\lambda$ on the star $\St_\Gg(\lambda)$, such that
	\begin{equation}\label{qtbc} a(b+_{\target(a)}c) = ab -_{\source(a)}a +_{\source(a)} ac  \end{equation}
	holds for all $a\ot b, a\ot c\in (\Gg\ot \Gg)^1$. 
	
	Equivalently\footnote{This equivalent formulation was pointed out to the author by P.\@ Saracco, and it appears to be well-known.}, it is a groupoid $\Gg$ with a heap structure $\points*{\blank,\blank,\blank}_\lambda$ (see \cite{BRZEZINSKITrussesParagons}) on each star $\St_\Gg(\lambda)$, satisfying
	\begin{equation}\label{eq:heapdistr} a\points*{b,c,d}_{\target(a)} = \points*{ab,ac,ad}_{\source(a)} \end{equation}
	for all $a\ot b, a\ot c, a\ot d\in (\Gg\ot \Gg)^1$. We call \eqref{eq:heapdistr} a \textit{left truss distributivity}, after \cite{BRZEZINSKITrussesParagons}.
\end{definition}
The above definition forces $1_\lambda$ to be the neutral element of $+_\lambda$; see \cite{ferri2024dynamical,sheng2024postgroupoidsquivertheoreticalsolutionsyangbaxter}. We call $(\St_\Gg(\lambda), +\lambda)$ the \textit{star-group} at $\lambda$. In the sequel, we shall sometimes write $+$ for $+_\lambda$, since the base vertex can usually be deduced from the context. A (\textit{strong}) \textit{morphism} of quiver skew braces is a (strong) morphism of groupoids that induces group homomorphisms between the additive star-groups. We denote by $\QSKB$, resp.\@ $\QSKB_\Lambda$, the category of quiver skew braces with morphisms, resp.\@ quiver skew braces over $\Lambda$ with strong morphisms.

The compatibility \eqref{qtbc} easily implies, as in \cite{vendramin2023skew}, the relations
\begin{equation}
	\label{eq:minus} a(-b+c)=a-ab+ac,\qquad  a(b-c)=ab-ac+a.
\end{equation}
\begin{proposition}[{\cite[Theorems 5.6, 5.8]{sheng2024postgroupoidsquivertheoreticalsolutionsyangbaxter}}] \label{prop:braided_to_qtsb}
	Given a groupoid $\Gg$, a braided groupoid structure on $\Gg$ is equivalent to a quiver skew brace structure on $\Gg$. Explicitly, given actions $\rightharpoonup,\leftharpoonup$ one defines $a+ b = a(a^{-1}\rightharpoonup b)$; and given a quiver skew brace structure $\{+_\lambda\}_{\lambda\in \Gg^0}$ one defines $a\rightharpoonup b = -a + ab$.
\end{proposition}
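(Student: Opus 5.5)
The plan is to write down both constructions explicitly, check that each lands in the right kind of structure, and then check that they are mutually inverse. Throughout, the group-action rules of \cite[Lemma 4.2]{ferri2025splitandFIT} and the consequences \eqref{bg1}, \eqref{bg2}, \eqref{bgun} of \cref{braided_groupoid} are used freely, always checking that every composite is defined.

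\emph{From a braided groupoid to a quiver skew brace.} Let $\rightharpoonup,\leftharpoonup$ be given, and for $a,b\in\St_\Gg(\lambda)$ set $a+_\lambda b=a(a^{-1}\rightharpoonup b)$.
\begin{enumerate}
\item \emph{Well-definedness.} Since $a^{-1}\ot b$ is consecutive and $\rightharpoonup$ is a quiver morphism, $a^{-1}\rightharpoonup b$ starts at $\target(a)$. Hence $a+_\lambda b\in\St_\Gg(\lambda)$.
\item \emph{Neutral element.} By \eqref{bgun}, $a+1_\lambda=a(a^{-1}\rightharpoonup 1_\lambda)=a$. By the semistrong action property, $1_\lambda+b=1_\lambda\rightharpoonup b=b$.
\item \emph{Associativity.} We need $(a+b)+c=a+(b+c)$, i.e.
\[ a(a^{-1}\rightharpoonup b)\bigl((a(a^{-1}\rightharpoonup b))^{-1}\rightharpoonup c\bigr)= a\bigl(a^{-1}\rightharpoonup b(b^{-1}\rightharpoonup c)\bigr). \]
Expand the right-hand side with \eqref{bg1}. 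This reduces the claim to the identity $(a^{-1}\leftharpoonup b)\rightharpoonup d=\bigl((a^{-1}\rightharpoonup b)^{-1}a^{-1}b\bigr)\rightharpoonup d$. That identity follows from \eqref{eq:braidedcomm}, which gives $a^{-1}\leftharpoonup b=(a^{-1}\rightharpoonup b)^{-1}a^{-1}b$, together with the action property.
\item \emph{Inverses.} The inverse of $b$ is $-b=b\bigl(b^{-1}\rightharpoonup\ \bigr)^{-1}(1)$. Concretely, we solve $b^{-1}\rightharpoonup x=b^{-1}$; this is solvable because $b^{-1}\rightharpoonup\blank$ is a bijection $\St(\target b)\to\St(\lambda)$ with inverse $b\rightharpoonup\blank$. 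Right inverses then suffice in a monoid with this cancellation.
\item \emph{Compatibility \eqref{qtbc}.} The rule $a\rightharpoonup b=-a+ab$ follows from the definition by rewriting $a+(a\rightharpoonup b)=a(a^{-1}\rightharpoonup(a\rightharpoonup b))=ab$. Then $a(b+c)=ab(b^{-1}\rightharpoonup c)$, and we compare it with $ab-a+ac=ab+\bigl((ab)^{-1}\rightharpoonup(-a+ac)\bigr)$. It therefore suffices that $b^{-1}\rightharpoonup c=(ab)^{-1}\rightharpoonup(a\rightharpoonup c)$, which is the action property.
\end{enumerate}

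\emph{From a quiver skew brace to a braided groupoid.} Set $a\rightharpoonup b=-a+ab$ and define $\leftharpoonup$ by \eqref{eq:braidedcomm}, i.e.\ $a\leftharpoonup b=(a\rightharpoonup b)^{-1}ab$.
\begin{enumerate}
\item \emph{Well-definedness.} $-a+ab$ lies in $\St(\source a)$ and ends at $\target(b)$, so it is consecutive with the right factor.
\item \emph{$\rightharpoonup$ is a left semistrong action.} We need $a\rightharpoonup(c\rightharpoonup d)=ac\rightharpoonup d$, i.e. $-a+a(-c+cd)=-ac+acd$. Using \eqref{eq:minus}, $a(-c+cd)=a-ac+acd$, and the identity follows.
\item \emph{$\leftharpoonup$ is a right action.} This is the main obstacle. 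The standard route is to show that $r$ is bijective with inverse built from the opposite quiver skew brace structure $b\cdot' a$, exactly as in the skew brace case \cite{guarnieri2017skew}. The axioms \eqref{bg2} and \eqref{bgun} are then deduced from \eqref{bg1} by the computation of \cite[Theorem 2]{LYZ}.
\item \emph{Bijectivity of $r$.} I would verify it directly. Given $u\ot v$, recover $a=-(u\text{-stuff})$ via $ab=uv$ and $-a+ab=u$, so $a=uv-u$, hence $a=\points*{uv,u,1}$; then $b=a^{-1}uv$. The key check is that this $a$ is well defined in $\St(\source u)$.
\end{enumerate}

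\emph{Mutual inverseness.} One composite is the identity $a+(-a+ab)=ab$. The other is $-a+a(a^{-1}\rightharpoonup b)=a^{-1}$-cancellation back to $b$ via the action property.
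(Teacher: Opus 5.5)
The paper gives no proof of its own here (it cites Sheng, Tang and Zhu), so this review judges your argument on its own terms. Your first direction (braided groupoid $\Rightarrow$ quiver skew brace) is correct. The reduction of associativity via \eqref{bg1} and \eqref{eq:braidedcomm}, the identity $a+(a\rightharpoonup b)=ab$, and the compatibility check all work. There are only slips:
\begin{itemize}
\item $b^{-1}\rightharpoonup\blank$ maps $\St_\Gg(\lambda)\to\St_\Gg(\target(b))$, not the reverse.
\item The additive inverse is simply $-b=b\rightharpoonup b^{-1}$; your expression $b(b^{-1}\rightharpoonup\ )^{-1}(1)$ is not even composable.
\item In the compatibility check, $ab-a+ac=ab\bigl((ab)^{-1}\rightharpoonup(-a+ac)\bigr)$ is a product, not a sum.
\end{itemize}

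\textbf{The genuine gap is step 3 of the converse direction.} You define $a\leftharpoonup b:=(a\rightharpoonup b)^{-1}ab$. After cancelling $abc$, the right-action law $(a\leftharpoonup b)\leftharpoonup c=a\leftharpoonup bc$ is exactly equivalent to \eqref{bg1}. So \eqref{bg1} is precisely what must be proved from the quiver skew brace axioms, and your sketch never proves it. Bijectivity of $r$, whether via the opposite structure or otherwise, is a separate axiom and says nothing about $\leftharpoonup$ being an action. Deducing \eqref{bg2} and \eqref{bgun} ``from \eqref{bg1}'' is circular: in the paper, \eqref{bg1} is only a consequence for objects already known to be braided groupoids.

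The missing ingredients are two facts. First, \eqref{qtbc} makes each $a\rightharpoonup\blank$ additive:
\[
a\rightharpoonup(x+y)=-a+ax-a+ay=(a\rightharpoonup x)+(a\rightharpoonup y).
\]
Second, $xy=x+(x\rightharpoonup y)$ holds trivially. Combining these with your step 2 and $(a\rightharpoonup b)(a\leftharpoonup b)=ab$ gives
\begin{align*}
(a\rightharpoonup b)\bigl((a\leftharpoonup b)\rightharpoonup c\bigr)
&= (a\rightharpoonup b)+\bigl((a\rightharpoonup b)(a\leftharpoonup b)\rightharpoonup c\bigr)
 = (a\rightharpoonup b)+\bigl(a\rightharpoonup(b\rightharpoonup c)\bigr)\\
&= a\rightharpoonup\bigl(b+(b\rightharpoonup c)\bigr) = a\rightharpoonup bc,
\end{align*}
which is \eqref{bg1}. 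Hence
\[
(a\leftharpoonup b)\leftharpoonup c=\bigl((a\leftharpoonup b)\rightharpoonup c\bigr)^{-1}(a\rightharpoonup b)^{-1}abc=(a\rightharpoonup bc)^{-1}abc=a\leftharpoonup bc,
\]
and $a\leftharpoonup 1_{\target(a)}=(-a+a)^{-1}a=a$.

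Two smaller points in the same direction:
\begin{itemize}
\item Your step 1 claims that $-a+ab$ ends at $\target(b)$. This is false in general: on a coarse quiver skew brace, $[a,b]\rightharpoonup[b,c]=[a,\points*{a,b,c}]$. The claim is also unnecessary, since $(a\rightharpoonup b)^{-1}ab$ is always composable and ends at $\target(b)$, which is all the semistrong condition on $\leftharpoonup$ requires.
\item In step 4 your formulas $a=uv-u$ and $b=a^{-1}uv$ are right, but well-definedness of $a$ is not the point to check. What needs checking is that $r(a\ot b)=u\ot v$, namely $-a+ab=-(uv-u)+uv=u$ and $u^{-1}ab=v$. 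Uniqueness also needs a line: $r(a\ot b)=u\ot v$ forces $ab=uv$ and $u=-a+uv$, hence $a=uv-u$.
\end{itemize}
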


\begin{remark}
	Every set $\Lambda$ is associated with a \textit{coarse groupoid} $\widehat{\Lambda}$, having set of arrows $\Lambda\times \Lambda$, and the projections on the two components as source and target map respectively. As in \cite{ferri2025splitandFIT}, for $a,b\in\Lambda$ we denote the unique arrow $a\to b$ as $[a,b]$.
	
	The theory of braidings is easier to handle on coarse groupoids. It is known (see \cite{FerriShibukawa}) that braidings on $\widehat{\Lambda}$ correspond bijectively to heap structures on $\Lambda$. Namely, the heap structure $\points*{\blank,\blank,\blank}$ corresponds to the braiding
	\[ r([a,b]\ot [b,c]) = [a,\points*{a,b,c}]\ot [\points*{a,b,c}, c] , \]
	and to the quiver skew brace structure
	\[  [a,b]+_a [a,c] = [a,\points*{b,a,c}].  \]
\end{remark}

The following remark is crucial for \cref{sec:crossed}.
\begin{remark}\label{rem:fixavertex}
	Let $\Gg$ be a connected quiver skew brace. The left truss distributivity \eqref{eq:heapdistr} implies that every star-group $\St_\Gg(\lambda)$ can be retrieved from a fixed star-group $\St_\Gg(\zeta)$, once we fix a maximal coarse subgroupoid $\Hh$ of $\Gg$ (which is well known to always exist and be wide, see e.g.\@ \cite[Remark 5.6 and Lemma 5.7]{ferri2024dynamical}). If we denote by $[\zeta,\lambda]$ the unique arrow of $\Hh$ from $\zeta$ to $\lambda$, then clearly
	\[ a+_\lambda b = [\zeta,\lambda]^{-1}\points*{[\zeta,\lambda]a, [\zeta,\lambda], [\zeta,\lambda]b}_\zeta,  \]
	which is an expression of $+_\lambda$ in function of $+_\zeta$. This operation of describing the quiver skew brace structure in terms of a single star-group was called `parallelisation' in \cite{ferri2024dynamical}. It is actually functorial between quiver skew braces and zero-symmetric dynamical skew braces \cite[\S 5]{ferri2024dynamical}.
\end{remark}

Quiver skew braces and braided groupoids are essentially the same thing, thus it makes sense to define \textit{quasimorphisms} of quiver skew braces. These will not be needed in the rest of this paper, but before moving on we ought to prove that nontrivial instances of quasimorphisms exist. 

\begin{example}\hspace{-5pt}\footnote{This example was suggested by I.\@ Colazzo.}\label{ex:quasimorphism}
	Using quiver skew braces, we construct a quasimorphism of braided groupoids that is not a morphism. This counterexample is inspired by a counterexample to the First Isomorphism Theorem in $\Gpd$ given by Brown \cite{GroupsToGroupoidsBrown}. Let $\Gg$ be the coarse groupoid $\widehat{2}$, let $\Hh$ be the group $(\Z/4\Z, +)$, with $\Hh^0 = \{\bullet\}$, and define $f\colon \Gg\to \Hh$ as $f^0(0) = f^0(1) = \bullet$, $f^1([i,i])= 0$, $f^1([0,1])= 1$, $f^1([1,0]) = 3$:
	\[\begin{tikzcd}
		0 & 1 & {} & {} & \bullet
		\arrow[from=1-1, to=1-1, loop, in=150, out=210, distance=5mm]
		\arrow[bend left=10, from=1-1, to=1-2]
		\arrow[bend left=10, from=1-2, to=1-1]
		\arrow[from=1-2, to=1-2, loop, in=330, out=30, distance=5mm]
		\arrow[thick, "f", from=1-3, to=1-4]
		\arrow["1", from=1-5, to=1-5, loop, in=55, out=125, distance=10mm]
		\arrow["0", from=1-5, to=1-5, loop, in=60, out=120, distance=5mm]
		\arrow[from=1-5, to=1-5, loop, in=50, out=130, distance=15mm]
		\arrow["2", from=1-5, to=1-5, loop, in=50, out=130, distance=15mm]
		\arrow["3", from=1-5, to=1-5, loop, in=50, out=130, distance=20mm]
	\end{tikzcd}\]
	On $\Gg$, we consider the following braiding:
	\begin{gather*} [0,1]\ot [1,0]\mapsto [0,1]\ot [1,0],\quad [1,0]\ot [0,1]\mapsto [1,0]\ot [0,1],\\
	[a,b]\ot [b,b]\mapsto [a,a]\ot [a,b],\quad [a,a]\ot [a,b]\mapsto [a,b]\ot [b,b]\text{ for all  }a,b\in\{0,1\}. \end{gather*}
	This is given by the quiver skew brace structure 
\[  [a,b]+_a [a,c] =[a, b-a+c \mod 2].  \]
On $\Hh$, we consider the only skew brace structure with multiplicative group isomorphic to $(\Z/4\Z, +)$, and additive group isomorphic to $(\Z/2\Z\times \Z/2\Z, +)$ \cite[Proposition 2.4]{BachillerClassification}, where the isomorphism between the additive group of the skew brace and the group $\Z/2\Z\times \Z/2\Z$ is given by the map
\[ 0\mapsto (0,0),\quad 1\mapsto (0,1),\quad 2\mapsto (1,0),\quad 3\mapsto (1,1). \] 
The associated solution has table
\begin{center}
	\begin{tabular}{c|cccc}
		$s$ & 0   & 1   & 2   & 3   \\ \hline
		 0  &(0,0)&(1,0)&(2,0)&(3,0)\\
		 1  &(0,1)&(3,3)&(2,1)&(1,3)\\
		 2  &(0,2)&(1,2)&(2,2)&(3,2)\\
		 3  &(0,3)&(3,1)&(2,3)&(1,1)\\
	\end{tabular}
\end{center}
If $x\ot y$ exists in $\Gg$, one can check by hand that $s^1(f^1(x)\ot f^1(y)) = (f^1\times f^1) r^1(x\ot y)$. However, the fact that $f^1(x)\ot f^1(y)$ exists in $\Hh$ does not imply that $x\ot y$ exists in $\Gg$ (take for instance $x = y=[1,2]$). Therefore, $f$ is a quasimorphism but not a morphism of braided groupoids.
	\end{example}
\section{Ideals and quotients}\label{sec:ideals} In analogy to skew braces, we give notions of ideals and quotients for quiver skew braces. We give two notions, termed \textit{ideals} and \textit{ideal bundles}, which work well in $\QSKB$ and in $\QSKB_\Lambda$ respectively. 
\begin{definition}
	Let $(\Gg, \set{+_\lambda}_{\lambda\in\Lambda}, \cdot)$ be a quiver skew brace over $\Lambda$. An \textit{ideal} of $\Gg$ is a normal subgroupoid $\Ii$ of $\Gg$ such that $\St_\Ii(\lambda)$ is an additive subgroup of $\St_\Gg(\lambda)$ for all $\lambda\in \Ii^0$, and satisfying
	\begin{align}
		\label{eq:weak_ideal_comm}& a+_{\source(a)}\St_\Ii(\source(a)) = \St_\Ii(\source(a))+_{\source(a)} a\text{ (normality)},\\
		\label{eq:weak_ideal_invar}& a\rightharpoonup \St_\Ii(\target(a))\subseteq \St_\Ii(\source(a)) \text{ (invariance)},
	\end{align}
	for all $a\in \Gg^1$. An ideal $\Ii$ is an \textit{ideal bundle} if it is moreover a subgroup bundle.
\end{definition}
\begin{proposition}\label{prop:quotient_is_QSKB}
	Let $\Gg$ be a quiver skew brace, and $\Ii$ be an ideal of $\Gg$. Then, the quotient $\Gg/\Ii$ inherits a unique quiver skew brace structure such that the canonical projection $\pi\colon \Gg\to \Gg/\Ii$ is a morphism of quiver skew braces. 
	
	This is moreover a strong morphism over $\Gg^0$ if and only if $\Ii$ is an ideal bundle.
\end{proposition}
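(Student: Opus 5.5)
The plan is to take the groupoid quotient $\Gg/\Ii$ of $\Gg$ by the normal subgroupoid $\Ii$ (as in \cite{BrownBookGroupoids,ferri2025splitandFIT}) as the underlying groupoid, and to put heap structures on its stars via the heap formulation \eqref{eq:heapdistr}. Recall the shape of this quotient. Its vertices are the classes of $\Gg^0$ under the relation ``connected by an arrow of $\Ii$''. Its arrows are the classes of $\Gg^1$ under $a\sim iaj$ with $i,j\in\Ii^1$ composable with $a$. The projection $\pi$ is a surjective groupoid morphism, and it is bijective on vertices exactly when $\Ii$ is totally disconnected, i.e.\@ a subgroup bundle. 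This already gives the last sentence of the statement, once the first part is done, since strongness of $\pi$ only concerns $\pi^0$.

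\emph{Step 1: representatives from a fixed vertex.} Fix $\lambda\in\Gg^0$. Every arrow of $\St_{\Gg/\Ii}([\lambda])$ has a representative with source $\lambda$: precompose with a suitable arrow of $\Ii$. I will show that, for $a\in\St_\Gg(\lambda)$, the set of representatives of $[a]$ lying in $\St_\Gg(\lambda)$ is exactly the coset $a+_\lambda\St_\Ii(\lambda)$. Such representatives have the form $iaj$, where $i\in\Ii_\lambda$ is a loop and $j\in\St_\Ii(\target(a))$.

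\begin{itemize}
\item For the right factor, $aj=a+(a\rightharpoonup j)$, and $a\rightharpoonup j\in\St_\Ii(\lambda)$ by invariance \eqref{eq:weak_ideal_invar}.
\item For the left factor, put $b=aj$ and write $ib=b\,(b^{-1}ib)=b+\bigl(b\rightharpoonup(b^{-1}ib)\bigr)$. Here $b^{-1}ib\in\Ii$ by normality of the subgroupoid, and then invariance places $b\rightharpoonup(b^{-1}ib)$ in $\St_\Ii(\lambda)$.
\item Conversely, any $a+k$ with $k\in\St_\Ii(\lambda)$ equals $a(a\rightharpoonup^{-1}k)$. Using $a\rightharpoonup b=-a+ab$, this is $a\,(a^{-1}\rightharpoonup k)$, which lies in $a\,\Ii$ by invariance applied to $a^{-1}$.
\end{itemize}

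Since $\St_\Ii(\lambda)$ is a normal additive subgroup by \eqref{eq:weak_ideal_comm}, this identifies $\pi^{-1}$ of the star with $\St_\Gg(\lambda)$ modulo $\St_\Ii(\lambda)$. So we may define $\points*{[a],[b],[c]}_{[\lambda]}:=[\points*{a,b,c}_\lambda]$ for representatives in $\St_\Gg(\lambda)$, and it is well defined for fixed $\lambda$.

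\emph{Step 2: independence of the base vertex.} This is the step I expect to be the main obstacle. If $\mu$ lies in the same class as $\lambda$, pick $i\in\Ii(\lambda,\mu)$. Left multiplication by $i$ is a bijection $\St_\Gg(\mu)\to\St_\Gg(\lambda)$, and by \eqref{eq:heapdistr} it is a heap isomorphism. It preserves classes, since $ia\sim a$. It also carries $\St_\Ii(\mu)$-cosets to $\St_\Ii(\lambda)$-cosets: by Step 1 applied at $\lambda$, the image $i(a+k)$ is a representative of $[a]$ starting at $\lambda$. Therefore the heap operation computed at $\mu$ agrees with the one computed at $\lambda$.

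\emph{Step 3: conclusion.} Truss distributivity for $\Gg/\Ii$ follows by choosing representatives $a\ot b$, $a\ot c$, $a\ot d$ that are consecutive in $\Gg$. This is possible because, after fixing a representative of $[a]$, we can adjust $b,c,d$ by arrows of $\Ii$ at their source. Then \eqref{eq:heapdistr} in $\Gg$ is pushed forward through $\pi$. By construction $\pi$ is a groupoid morphism restricting to heap (hence group) homomorphisms on stars, so it is a morphism of quiver skew braces. Uniqueness holds because $\pi$ is surjective on each star, and every star of $\Gg/\Ii$ is the image of some $\St_\Gg(\lambda)$, so the operations are forced.
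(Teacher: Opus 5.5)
Your proposal is correct and follows the paper's route. In both, the key point is that within a star $\St_\Gg(\lambda)$ the $\Gg/\Ii$-classes are exactly the additive cosets of $\St_\Ii(\lambda)$, which comes from the identity $y\,i = y + (y\rightharpoonup i)$ together with invariance \eqref{eq:weak_ideal_invar}. You are more thorough than the paper's proof, which checks only the direction ``same class $\Rightarrow$ same coset'' and leaves implicit both the converse (your $a+k = a\,(a^{-1}\rightharpoonup k)$) and the independence of the base vertex within a class of vertices (your Step 2 via \eqref{eq:heapdistr}).
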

\begin{proof}
	Since $\Ii$ is normal in $\Gg$, the operation $\cdot$ induces a groupoid structure on $\Gg/\Ii$ (denoted again by $\cdot $) such that $\pi$ is a morphism of groupoids. Moreover, \eqref{eq:weak_ideal_comm} states that $(\St_\Ii(\lambda), +_\lambda)$ is a normal subgroup of $(\St_\Gg(\lambda), +_\lambda)$ for all $\lambda\in\Gg^0$. We need to check that two arrows $x,y\in\St_\Gg(\lambda)$ are equivalent in $\Gg/\Ii$ if and only if they are equivalent in the quotient group $\St_\Gg(\lambda)/\St_\Ii(\lambda)$. Let $x,y\in\St_\Gg(\lambda)$ be equivalent in $\Gg/\Ii$, thus $x = y  i$ for some $i\in \Gg(\target(y),\target(x))$. We search for an element $j\in\St_\Gg(\lambda)$ such that $x = y+j$, and then we prove that $j$ lies in $\St_\Ii(\lambda)$. By the definition of $\rightharpoonup$ one easily has $y+j = y  (y^{-1}\rightharpoonup j)$, thus the request that $x$ be equal to $ y+j$ is equivalent to the condition $ j =y\rightharpoonup (y^{-1}  x) =  y\rightharpoonup i$. Finally, $y\rightharpoonup i \in \St_\Ii(\lambda)$ holds by \eqref{eq:weak_ideal_invar}. This proves that the quotient is well-defined.
	
	It is immediate to notice that the compatibility \eqref{qtbc} passes to the quotient, hence $\Gg/\Ii$ is a quiver skew brace, and the canonical projection is clearly a morphism.
	
	If $\Ii$ is an ideal bundle, the quotient $\Gg/\Ii$ is a quiver over $\Gg^0$, and $\pi^0$ is the identity map on $\Gg^0$. Conversely if $\pi^0 = \id_{\Gg^0}$ then there is no arrow $x\in \Ii^1$ with $\source(x)\neq \target(x)$, otherwise $\source(x)$ and $\target(x)$ would be identified by $\pi^0$: therefore $\Ii$ is a bundle of loops, and hence an ideal bundle. 
\end{proof}
\begin{proposition}
	The ideals of $\Gg$ are exactly the kernels of morphisms $f\colon \Gg\to \Hh$ in $\QSKB$. The ideal bundles are exactly the kernels of morphisms in $\QSKB_{\Gg^0}$.
\end{proposition}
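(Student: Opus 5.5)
One inclusion is given by \cref{prop:quotient_is_QSKB}: an ideal $\Ii$ is the kernel of the canonical projection $\pi\colon\Gg\to\Gg/\Ii$, which is a morphism in $\QSKB$. For an ideal bundle, $\pi$ is strong over $\Gg^0$, so it is a morphism in $\QSKB_{\Gg^0}$. It remains to check that $\ker\pi=\Ii$. Here I take the kernel of a groupoid morphism $f$ to be the subgroupoid of arrows sent to identity loops, as in \cite{ferri2025splitandFIT}. By construction, an arrow $x$ is identified with a unit in $\Gg/\Ii$ exactly when $x$ is equivalent to $1_{\source(x)}$, that is, when $x\in\Ii$.

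For the converse, let $f\colon\Gg\to\Hh$ be a morphism of quiver skew braces and set $\Ii=\ker f=\{x\in\Gg^1\mid f^1(x)=1_{f^0(\source(x))}\}$. The plan is to verify the axioms of an ideal in order.

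\emph{Normal subgroupoid.} The kernel of a groupoid morphism is a normal subgroupoid; this is standard, since it is wide and closed under conjugation $g^{-1}ig$.

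\emph{Additive subgroups.} The map $f$ restricts to a group homomorphism $\St_\Gg(\lambda)\to\St_\Hh(f^0\lambda)$. Since $1_\lambda$ is the neutral element of $+_\lambda$, the set $\St_\Ii(\lambda)$ is exactly the kernel of this homomorphism. Hence it is a normal additive subgroup, which gives \eqref{eq:weak_ideal_comm}.

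\emph{Invariance.} Since $f$ preserves both $\cdot$ and $+$, it preserves $a\rightharpoonup b=-a+ab$. For $a\ot i$ with $i\in\St_\Ii(\target(a))$ we get
\[
f(a\rightharpoonup i)=-f(a)+f(a)1=-f(a)+f(a)=1_{f^0(\source(a))},
\]
so $a\rightharpoonup i\in\St_\Ii(\source(a))$.

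\emph{Bundle case.} If $f$ is strong, every $x\in\ker f$ satisfies $\source(x)=f^0\source(x)=f^0\target(x)=\target(x)$. So $\ker f$ consists of loops and is a subgroup bundle, hence an ideal bundle.

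I expect the main obstacle to be definitional rather than computational. For general (non-strong) morphisms in $\QSKB$, the kernel contains non-loop arrows $x$ with $f^0\source(x)=f^0\target(x)$. So I must make sure the notion of kernel used matches this (arrows mapped to units), and that $\St_\Ii(\lambda)$, which includes such non-loop arrows, is still exactly the additive kernel. I would also check that in the first inclusion the equivalence relation defining $\Gg/\Ii$ on vertices creates no extra identifications. This holds because $\pi^0$ identifies $\lambda$ with $\mu$ precisely when $\Ii(\lambda,\mu)\neq\emptyset$.
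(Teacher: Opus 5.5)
Your proof is correct and follows essentially the same route as the paper. One inclusion comes from the projection $\pi\colon\Gg\to\Gg/\Ii$ of \cref{prop:quotient_is_QSKB}; the converse identifies $\St_{\ker f}(\lambda)$ with the kernel of the additive homomorphism $\St_\Gg(\lambda)\to\St_\Hh(f^0(\lambda))$ and computes $f^1(a\rightharpoonup k)=-f^1(a)+f^1(a)=1$. The paper leaves implicit the extra details you spell out: that $\ker\pi=\Ii$, that $\ker f$ is a normal subgroupoid, and that strongness forces $\ker f$ to consist of loops.
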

\begin{proof}
	Every ideal $\Ii$ is the kernel of the projection $\pi\colon \Gg\to \Gg/\Ii$. By \cref{prop:quotient_is_QSKB}, $\pi$ is a morphism in $\QSKB$, and in case $\Ii$ is an ideal bundle, the same projection is a morphism in $\QSKB_{\Gg^0}$.
	
	Conversely, let $f\colon \Gg\to\Hh$ be a morphism in $\QSKB$, and $\Ii$ be its kernel. For every $\lambda\in\Gg^0$, the morphism $f$ induces a homomorphism between the additive groups $\St_\Gg(\lambda)$ and $\St_\Hh(f^0(\lambda))$, whose kernel $\St_\Ii(\lambda) = \St_\Ii(\lambda)$ is a normal subgroup: thus $\Ii$ satisfies \eqref{eq:weak_ideal_comm}. As for \eqref{eq:weak_ideal_invar}, for $a\in \Gg^1$ and $k\in \St_\Ii(\target(a))$ one has
	\[ f^1(a\rightharpoonup k) = f^1(-a+ak) = -f^1(a) + f^1(a)1 =1, \]
	thus $a\rightharpoonup k \in \St_\Ii(\source(a))$. If moreover $f$ is a morphism in $\QSKB_{\Gg^0}$, then its kernel is a bundle of loops, and hence an ideal bundle.
\end{proof}

\begin{corollary}\label{lem:right_invariant}
	Let $\Ii$ be an ideal in $\Gg$. Then $\Ii$ is invariant under $\leftharpoonup$.
\end{corollary}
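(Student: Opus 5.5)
The plan is to read the statement as: whenever $k\in\Ii^1$ and $a\in\Gg^1$ are consecutive, $k\leftharpoonup a\in\Ii^1$. We deduce this from the preceding proposition rather than from the defining axioms of an ideal. By that proposition (or directly by \cref{prop:quotient_is_QSKB}), $\Ii$ is the kernel of the canonical projection $\pi\colon\Gg\to\Gg/\Ii$, which is a morphism of quiver skew braces. So it suffices to show that $\pi^1(k\leftharpoonup a)$ is a unit loop.

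The first step is to write $\leftharpoonup$ in terms of the quiver skew brace operations. By \cref{prop:braided_to_qtsb} we have $a\rightharpoonup b=-a+ab$, and \eqref{eq:braidedcomm} then gives
\[ a\leftharpoonup b=(a\rightharpoonup b)^{-1}ab=(-a+ab)^{-1}ab. \]
A morphism of quiver skew braces preserves the groupoid product, inverses, and the additive star-group operations, including additive inverses. Hence it commutes with $\rightharpoonup$, and therefore with $\leftharpoonup$. This gives
\[ \pi^1(k\leftharpoonup a)=\pi^1(k)\leftharpoonup\pi^1(a). \]
Here we use that, since $\pi$ is a morphism of groupoids, $\pi^1(k)\ot\pi^1(a)$ is defined, so the right-hand side makes sense.

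Next, $k\in\Ii^1$ means that $\pi^1(k)$ is a unit loop $1_\mu$. By \eqref{bgun}, $1_\mu\leftharpoonup\pi^1(a)=1_{\target(\pi^1(a))}$. One can also check this directly from the formula above: $1\rightharpoonup x=-1+x=x$, because $1_\mu$ is the neutral element of $+_\mu$, and so $1\leftharpoonup x=x^{-1}x=1$. Therefore $\pi^1(k\leftharpoonup a)$ is a unit, and $k\leftharpoonup a$ lies in the kernel $\Ii$.

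The only point needing care is the claim that morphisms of quiver skew braces commute with $\leftharpoonup$. This requires that additive inverses in star-groups are preserved, which holds because each induced map on stars is a group homomorphism. It also requires that every expression involved is well defined, which follows because $\pi$ is a morphism of quivers compatible with sources and targets. Everything else is immediate.
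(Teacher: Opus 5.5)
Your proof is correct and follows essentially the paper's route. You realise $\Ii$ as a kernel, use that morphisms of quiver skew braces commute with $\rightharpoonup$ (and hence, via \eqref{eq:braidedcomm}, with $\leftharpoonup$), and conclude from \eqref{bgun}. The paper instead applies $f^1$ directly to $(i\rightharpoonup g)(i\leftharpoonup g)=ig$ and cancels $f^1(g)$ using $1\rightharpoonup f^1(g)=f^1(g)$.
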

\begin{proof}
	Let $\Ii$ be the kernel of $f$. Let $i\in\Ii^1$ and $g\in\Gg^1$ such that $i\ot g$ is defined. Then
	\begin{align*}&f^1((i\rightharpoonup g)(i\leftharpoonup g)) = (f^1(i)\rightharpoonup f^1(g))f^1(i\leftharpoonup g)\\& = (1\rightharpoonup f^1(g))f^1(i\leftharpoonup g)= f^1(g)f^1(i\leftharpoonup g),\end{align*}
	while on the other hand
	\[ f^1((i\rightharpoonup g)(i\leftharpoonup g))= f^1(ig) =f^1(i)f^1(g), \]
	whence $f^1(i\leftharpoonup g) = 1$.
\end{proof}

\section{Products}\label{sec:products}

\subsection{Classical semidirect products \textit{à la} Brown} Given a group $G$, a set $\Lambda$, and an action $\triangleright$ of the groupoid $\widehat{\Lambda}$ on $G$, we recall e.g.\@ from Brown \cite{BrownFibrations,GroupsToGroupoidsBrown} that the set $G\times \widehat{\Lambda}^1$ can be given a groupoid structure, which is denoted by $G\rtimes \widehat{\Lambda}$ in \cite{ferri2025splitandFIT}, by setting
\begin{gather*} \source\Big(g\times [a,b]\Big) = a, \qquad \target\Big(g\times [a,b]\Big) = b,\\
	\Big(g\times [a,b]\Big) \Big(h\times [b,c]\Big) = g([a,b]\triangleright h)\times [a,c]. \end{gather*} 
	
	Using the same nomenclature as in \cite{ferri2025splitandFIT}, we also say `$\Gg$-module' meaning a quiver $Q$ with a semistrong action of the groupoid $\Gg$ on $Q$. We say that a small category $\Mm$ is a `$\Gg$-module algebra' if the quiver $\Mm$ is a $\Gg$-module with the action $\triangleright$, and moreover $g\triangleright (ab) = (g\triangleright a) (g\triangleright  b)$ for $g\in \Gg^1$, $a,b\in \Mm^1$.
	
	We can define a skew brace-by-heap semidirect product, giving rise to a quiver skew brace. We do not prove the following proposition immediately, since it will be a special case of \cref{prop:classicalsemidirect}.
\begin{proposition}
	Let $(G,+,\cdot)$ be a skew brace, $(\Lambda, \points*{\blank,\blank,\blank})$ be a heap, and $\triangleright$ be an action of the groupoid $\widehat{\Lambda}$ on $G$. On the groupoid $(G,\cdot)\rtimes \widehat{\Lambda}$ define the operation
	\[ (g\times [a,b])+(h\times[a,c]) = (g([a,b]\triangleright h))\times [a,\points*{b,a,c}].  \]
	This restricts to group operations for all stars $\St_{G\rtimes \widehat{\Lambda}}(a) = G\times \St_{\widehat{\Lambda}}(a)$. It makes $G\rtimes \widehat{\Lambda}$ into a quiver skew brace if and only if $(G,+)$ is a $\widehat{\Lambda}$-module algebra.
\end{proposition}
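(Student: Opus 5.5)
The plan is to check the two claims directly from the formulas, splitting every computation into a $\widehat{\Lambda}$-component and a $G$-component. The $\widehat{\Lambda}$-component is always handled by the Remark on coarse groupoids. There, $[a,b]+_a[a,c]=[a,\points*{b,a,c}]$ is a quiver skew brace structure on $\widehat{\Lambda}$, so every identity we need holds automatically in the second coordinate. All the content is therefore in the $G$-coordinate. I will read the first coordinate of the sum in the way that makes $(G,+)$ relevant, namely as the additive combination of $g$ and $[a,b]\triangleright h$. If the multiplicative reading is the intended one, the same bookkeeping applies with $\cdot$ in place of $+$ in the group-axiom step.

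First I show that each star $G\times\St_{\widehat{\Lambda}}(a)$ is a group. The neutral element should be $1_G\times[a,a]$, using $[a,a]\triangleright h=h$ and $\points*{a,a,c}=c$. Inverses are found by solving for the second coordinate in the heap (this gives $[a,\points*{a,b,a}]$) and then for the first coordinate in $G$. Associativity is the delicate part. Expanding both bracketings, the second coordinates agree by heap associativity. The first coordinates require $[a,b]\triangleright(-)$ to be a homomorphism of the relevant group structure on $G$, together with a compatibility of the form
\[ [a,b]\triangleright\big([a,c]\triangleright k\big)\;\text{versus}\;[a,\points*{b,a,c}]\triangleright k. \]
This compatibility has to be rewritten using the groupoid action law, via $[a,\points*{b,a,c}]=[a,b]\,[b,\points*{b,a,c}]$, so that both sides become actions of composable arrows. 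I expect this step to need the most care, and I would organise it by reducing everything to actions of arrows starting at $a$.

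Next I verify the brace compatibility \eqref{qtbc}, or more conveniently the truss distributivity \eqref{eq:heapdistr}. Take $x=g\times[a,b]$ and arrows $y,z,w$ out of $b$ with second components $[b,c],[b,d],[b,e]$. Compute $x\points*{y,z,w}_b$ and $\points*{xy,xz,xw}_a$ using the groupoid product $(g\times[a,b])(h\times[b,c])=g([a,b]\triangleright h)\times[a,c]$. The second coordinates coincide by the coarse case. In the first coordinate, $g$ multiplies on the left in both expressions, and the skew brace identity of $(G,+,\cdot)$ (in its heap form $g\points*{u,v,t}=\points*{gu,gv,gt}$) lets me pull $g$ out. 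The remaining condition is that $[a,b]\triangleright\points*{h,k,l}=\points*{[a,b]\triangleright h,[a,b]\triangleright k,[a,b]\triangleright l}$ for all $h,k,l\in G$. This is exactly the statement that each $[a,b]\triangleright$ is a homomorphism of $(G,+)$, i.e.\@ that $(G,+)$ is a $\widehat{\Lambda}$-module algebra. This gives sufficiency.

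For necessity, I specialise the distributivity identity to $g=1_G$ and suitable choices of $c,d,e$, for instance all equal to $b$. Then every other contribution becomes trivial, and the identity collapses to additivity of $[a,b]\triangleright$ on $(G,+)$. Since this must hold for all $a,b$, the module algebra condition follows. Finally, I note that the whole construction is the one-vertex-fibre case of the general classical semidirect product, so these computations can later be subsumed by \cref{prop:classicalsemidirect}.
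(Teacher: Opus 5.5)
There is a genuine gap, and it comes from the reading you choose for the sum. The paper obtains this statement as the one-vertex case of \cref{prop:classicalsemidirect}. There the star operation is componentwise, $(g\times[a,b])+(h\times[a,c])=(g+h)\times[a,\points*{b,a,c}]$. The factor $g([a,b]\triangleright h)$ in the displayed formula is the multiplication of $G\rtimes\widehat{\Lambda}$, carried over by mistake. With the componentwise sum, each star is the direct product of $(G,+)$ with the group $(\Lambda,\points*{\blank,a,\blank})$. So the neutral element $1\times[a,a]$, the inverse $(-g)\times[a,\points*{a,b,a}]$ and associativity are immediate, and $\triangleright$ plays no role.

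Under your reading $g+[a,b]\triangleright h$ (and equally under the literal multiplicative one), associativity requires $[a,b]\triangleright([a,c]\triangleright k)=[a,\points*{b,a,c}]\triangleright k$. This is exactly the step you defer, and the action law cannot deliver it. Your rewriting through $[a,\points*{b,a,c}]=[a,b]\,[b,\points*{b,a,c}]$ only turns it into $[a,c]\triangleright k=[b,\points*{b,a,c}]\triangleright k$, which is an extra invariance hypothesis on $\triangleright$. It already fails in the following example. Take $G=\Z/7\Z$ as a trivial brace, so both operations are addition and the two readings coincide. Take $\Lambda=\Z/2\Z$ and $[a,b]\triangleright k=\beta_a\beta_b^{-1}(k)$, with $\beta_0=\id$ and $\beta_1$ multiplication by $2$. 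Then $\big((0\times[0,1])+(0\times[0,1])\big)+(k\times[0,1])=k\times[0,1]$, whereas $(0\times[0,1])+\big((0\times[0,1])+(k\times[0,1])\big)=2k\times[0,1]$. Here $\triangleright$ acts by brace automorphisms, so the module algebra condition holds, and yet the stars are not even groups.

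Your distributivity step, by contrast, silently uses the componentwise sum. You take the first coordinate of $\points*{y,z,w}_b$ to be $\points*{h,k,l}$, which is false under your twisted reading, since there $k$ and $l$ would pick up further twists by $\triangleright$. With the componentwise sum that step is correct and is essentially the paper's argument, written in heap form rather than via \eqref{qtbc}. You pull $g$ out via $g\points*{u,v,t}=\points*{gu,gv,gt}$ in $G$ and cancel it. This reduces the question to $[a,b]\triangleright$ preserving the heap of $(G,+)$, which is equivalent to additivity since $[a,b]\triangleright 1=1$. Setting $g=1$ then gives necessity. So the fix is to adopt the componentwise sum and drop the compatibility between $[a,b]\triangleright([a,c]\triangleright k)$ and $[a,\points*{b,a,c}]\triangleright k$ altogether: it is neither needed nor true.
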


When $\triangleright$ is the trivial action, we call this a \textit{direct product}, and denote it by $G\times \widehat{\Lambda}$.

The skew brace-by-heap semidirect product can be slightly generalised, to a quiver skew brace-by-heap semidirect product. This will lead us to finding a quiver skew brace that has nontrivial ideals but only trivial ideal bundles (\cref{ex:unprunable_nonsimple}).

	Let $\Gg$ be a quiver skew brace, and $(\Lambda, \points*{\blank,\blank,\blank}$) be a heap. Let $\triangleright$ be an action of the groupoid $\widehat{\Lambda}$ on $\Gg$. Consider the semidirect product of groupoids $\Gg\rtimes_\triangleright \widehat{\Lambda}$ as in \cite[Definition 4.6]{ferri2025splitandFIT}, with underlying quiver
	\begin{gather*} (\Gg^1\times\widehat{\Lambda}^1 )\twomapsright{\Source}{\Target} (\Gg^0\times \Lambda)  \\ \Source(g\times [a,b]) = \source(g)\times a,\qquad \Target(g\times [a,b]) = ([b,a]\triangleright \target(g))\times b, \end{gather*}
	and with multiplication and units
	\[  (g\times [a,b])\cdot (h\times [b,c])  = g([a,b]\triangleright h) \times [a,c],\qquad 1_{\lambda, a}= 1_\lambda\times 1_a. \]

\begin{proposition}\label{prop:classicalsemidirect}
	Let $\Gg,\Lambda, \triangleright$ be as above. On the semidirect product $\Gg \rtimes_\triangleright\widehat{\Lambda}$, define an additive structure as
	\[  (g\times [a,b]) +_a (h\times [a,c]) = (g+_{\lambda} h)\times [a, \points*{b,a,c}]   \]
	for $\lambda\in\Gg^0$, $g,h \in \St_\Gg(\lambda)$. This is a quiver skew brace structure if and only if the bundle of additive star-groups $\bigsqcup_{\lambda\in \Gg^0} \St_\Gg(\lambda)$ is a $\widehat{\Lambda}$-module algebra.
\end{proposition}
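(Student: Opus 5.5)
We need to verify three things for the proposed additive structure on $\Gg\rtimes_\triangleright\widehat{\Lambda}$: that it is well defined on stars, that each star is a group, and that the left truss distributivity \eqref{qtbc} holds exactly when the additive star-groups form a $\widehat{\Lambda}$-module algebra.

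First we fix notation for stars. The source of $g\times[a,b]$ is $\source(g)\times a$, so the star at the vertex $\lambda\times a$ is $\St_\Gg(\lambda)\times\St_{\widehat\Lambda}(a)$. For $g,h\in\St_\Gg(\lambda)$, the element $g+_\lambda h$ again lies in $\St_\Gg(\lambda)$. Likewise $[a,\points*{b,a,c}]$ lies in $\St_{\widehat\Lambda}(a)$. So the operation stays inside the star.

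The star is then the direct product of the group $(\St_\Gg(\lambda),+_\lambda)$ with the group $(\St_{\widehat\Lambda}(a),+_a)$, where $[a,b]+_a[a,c]=[a,\points*{b,a,c}]$ is the heap-induced group with neutral element $[a,a]$. Hence it is a group with neutral element $1_\lambda\times 1_a$, which is the unit loop, as required.

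The heart of the argument is the distributivity \eqref{qtbc}. Take $x=g\times[a,b]$ with target $\mu\times b$, where $\mu=[b,a]\triangleright\target(g)$. Take $y=h\times[b,c]$ and $z=k\times[b,d]$ with $h,k\in\St_\Gg(\mu)$. The two sides are
\[ x(y+z)= g\bigl([a,b]\triangleright(h+_\mu k)\bigr)\times[a,\points*{c,b,d}], \]
\[ xy-x+xz = \Bigl(g([a,b]\triangleright h)-g+g([a,b]\triangleright k)\Bigr)\times[a,\points*{c,a,\points*{a,a,\ldots}}]. \]
Note that $[a,b]\triangleright h$ and $[a,b]\triangleright k$ lie in $\St_\Gg(\target(g))$, because the action is semistrong.

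In the $\widehat\Lambda$ component the right-hand side is
\[ \points*{\points*{c,a,b},a,\ldots}=\points*{c,b,d}, \]
and equality follows from heap associativity and the Mal'tsev laws. This is exactly the fact that the coarse braiding from a heap is a quiver skew brace, so this component is automatic.

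In the $\Gg$ component we apply \eqref{qtbc} in $\Gg$ to $g$ and the consecutive pair $g\ot([a,b]\triangleright h)$, $g\ot([a,b]\triangleright k)$. This gives
\[ xy-x+xz = g\bigl(([a,b]\triangleright h)+([a,b]\triangleright k)\bigr). \]
Left-cancelling $g$ in the groupoid, the distributivity therefore holds for all $x,y,z$ if and only if
\[ [a,b]\triangleright(h+_\mu k)=([a,b]\triangleright h)+([a,b]\triangleright k) \]
for all $a,b$ and all $h,k\in\St_\Gg(\mu)$.

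This is precisely the statement that each $[a,b]$ acts by group homomorphisms between the star-groups. In other words, the bundle $\bigsqcup_\lambda\St_\Gg(\lambda)$, viewed as a category via $+$, is a $\widehat\Lambda$-module algebra.

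For the "only if" direction we must be able to choose $g$ freely. We take $g=1_{\lambda'}$ at an arbitrary vertex $\lambda'$, with $\mu=[b,a]\triangleright\lambda'$. Since the action on vertices is bijective, every $\mu$ is reached this way, so the homomorphism condition is forced at every star.

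The main point needing care is bookkeeping of vertices. We must check that the targets in the semidirect quiver match, so that the expressions are defined. We must also use that the action is semistrong, so that $[a,b]\triangleright$ maps $\St_\Gg(\mu)$ into $\St_\Gg([a,b]\triangleright\mu)$ with $[a,b]\triangleright\mu=\target(g)$. Both follow from \cite[Definition 4.6]{ferri2025splitandFIT} and \cite[Lemma 4.2]{ferri2025splitandFIT}.
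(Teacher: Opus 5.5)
Your proposal is correct and follows the paper's proof. You treat each star as a componentwise group, and the $\widehat{\Lambda}$-component of the compatibility reduces to heap identities. Applying \eqref{qtbc} in $\Gg$ rewrites the first component of $xy-x+xz$ as $g\bigl([a,b]\triangleright h+[a,b]\triangleright k\bigr)$, and cancelling $g$ leaves exactly the module algebra condition. Your explicit choice $g=1_{[a,b]\triangleright\mu}$ for the converse makes precise what the paper leaves implicit.

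One slip to fix: since $-[a,b]=[a,\langle a,b,a\rangle]$, the intermediate heap expression in the $\widehat{\Lambda}$-component should be $\langle\langle c,b,a\rangle,a,d\rangle$, not $\langle\langle c,a,b\rangle,a,\ldots\rangle$. Your conceptual justification via the coarse quiver skew brace on $\widehat{\Lambda}$ already settles that component, so the argument is unaffected.
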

\begin{proof}
	We check that $+$ is associative:
	\begin{align*}
		&\big((g\times [a,b])+(h\times[a,c])\big)+ (k\times[a,d])\\
		&= (g+h+k)\times [a, \points*{\points*{b,a,c},a,d}]\\
		&=(g+h+k)\times [a, \points*{b,a,\points*{c,a,d}}]\\
		&= (g\times [a,b])+\big((h\times[a,c])+ (k\times[a,d])\big).
	\end{align*}
	Clearly $1\times [a,a]$ is a neutral element for $\St_{\Gg\rtimes \widehat{\Lambda}}(a)$, and the additive inverse of $g\times [a,b]$ is $(-g)\times[a, \points*{a,b,a}]$. We check the compatibility between $\cdot$ and $+$: on one hand
	\begin{align*}&(g\times [a,b])\cdot \Big( h\times [b,c] + k\times [b,d] \Big)\\
		&=	g([a,b]\triangleright (h+k))\times [a, \points*{c,b,d}].
	\end{align*} 
	On the other hand,
	\begin{align*}
		&(g\times [a,b])(h\times [b,c]) - (g\times [a,b]) + (g\times [a,b])(k\times{b,d})\\
		&= \Big( g([a,b]\triangleright h)-g + g([a,b]\triangleright k)  \Big)\times [a, \points*{ \points*{c,a,\points*{a,b,a}},a,d}]\\
		&= \Big( g([a,b]\triangleright h)-g + g([a,b]\triangleright k)  \Big)\times [a, \points*{ \points*{\points*{c,a,a},b,a},a,d}]\\
		&=\Big( g([a,b]\triangleright h)-g + g([a,b]\triangleright k)  \Big)\times [a, \points*{ \points*{c,b,a},a,d}]\\
		&=\Big( g([a,b]\triangleright h)-g + g([a,b]\triangleright k)  \Big)\times [a, \points*{ c,b,d}]\\
		&=\big( g([a,b]\triangleright h+ [a,b]\triangleright k)  \big)\times [a, \points*{ c,b,d}].
	\end{align*}
	The first entries are equal for all $g\in \Gg(\Lambda, [a,b]\triangleright \source(h)) = \Gg(\Lambda, [a,b]\triangleright \source(k))$ if and only if  
	\[[a,b]\triangleright (h+k) = [a,b]\triangleright h+ [a,b]\triangleright k \]
	holds for all $\lambda\in\Gg^0$, $h,k\in \St_\Gg(\lambda)$, which is the module algebra condition on $\triangleright$.
\end{proof}

\begin{proposition}\label{prop:ideal_of_prod}
	Let $\Gg\rtimes_\triangleright \widehat{\Lambda}$ be a semidirect product of quiver skew braces. Then $\Gg\times \One_{\widehat{\Lambda}}= \bigsqcup_{a\in \Lambda} \Gg \times [a,a]$ is an ideal. In particular, it is a sub-quiver skew brace, isomorphic to $\bigsqcup_{a\in \Lambda} \Gg$.
\end{proposition}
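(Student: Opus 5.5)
The quickest route is to exhibit $\Ii:=\Gg\times\One_{\widehat{\Lambda}}$ as the kernel of a morphism of quiver skew braces and then invoke the proposition identifying ideals with kernels of morphisms in $\QSKB$. The candidate is the projection $p\colon \Gg\rtimes_\triangleright\widehat{\Lambda}\to\widehat{\Lambda}$, with
\[ p^1(g\times[a,b])=[a,b],\qquad p^0(\lambda\times a)=a, \]
where $\widehat{\Lambda}$ carries the quiver skew brace structure $[a,b]+_a[a,c]=[a,\points*{b,a,c}]$ coming from the heap $\Lambda$.

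I would carry this out in three steps.
\begin{enumerate}
\item Check that $p$ is a morphism of quivers: $\Source$ and $\Target$ project onto $a$ and $b$ respectively, whatever the $\Gg^0$-component is.
\item Check multiplicativity: $p((g\times[a,b])(h\times[b,c]))=[a,c]=[a,b][b,c]$.
\item Check additivity on stars: $p^1$ sends $(g\times[a,b])+_a(h\times[a,c])$ to $[a,\points*{b,a,c}]=[a,b]+_a[a,c]$. Hence $p$ restricts to a group homomorphism on each star-group.
\end{enumerate}
So $p$ is a morphism in $\QSKB$. The units of $\widehat{\Lambda}$ are exactly the arrows $[a,a]$, so the kernel of $p$ consists of the arrows $g\times[a,a]$ for arbitrary $g\in\Gg^1$. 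Its vertex set is all of $\Gg^0\times\Lambda$, which is consistent with how kernels of groupoid morphisms are taken in \cite{ferri2025splitandFIT}. Thus $\ker p=\Ii$, and $\Ii$ is an ideal.

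For the second claim, an ideal is closed under multiplication, units, inverses and the star-additions, so it is a sub-quiver skew brace. To identify it, fix $a\in\Lambda$. The products in $\Gg\times[a,a]$ are
\[ (g\times[a,a])(h\times[a,a])=g([a,a]\triangleright h)\times[a,a]=gh\times[a,a], \]
since $[a,a]=1_a$ acts trivially. Likewise the sums are $(g+h)\times[a,[a,a,a]]=(g+h)\times[a,a]$. Source and target are $\source(g)\times a$ and $\target(g)\times a$, again because $1_a$ acts as the identity on $\Gg^0$. So $g\mapsto g\times[a,a]$ is an isomorphism from $\Gg$ onto the $a$-th component. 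Different $a$ give disjoint vertex sets $\Gg^0\times\{a\}$, so $\Ii$ is isomorphic to $\bigsqcup_{a\in\Lambda}\Gg$.

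The step I expect to be the main obstacle is the kernel convention: one must confirm that the kernel of a morphism which is not strong is taken over the full vertex set, and that the proposition on kernels applies to $p$ even though $p^0$ collapses vertices. If that is doubtful, I would fall back on a direct verification.
\begin{itemize}
\item Normality of $\Ii$ as a subgroupoid: conjugating $g\times[a,a]$ by $x\times[c,a]$ gives an arrow whose second component is $[c,c]$.
\item Additive normality \eqref{eq:weak_ideal_comm}: the second component of a sum is computed in the heap, and $\points*{b,a,a}=b=\points*{a,a,b}$.
\item Invariance \eqref{eq:weak_ideal_invar}: $x\rightharpoonup i=-x+xi$, and its second component is $[a,\points*{\points*{a,b,a},a,b}]=[a,a]$.
\end{itemize}
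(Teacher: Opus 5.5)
Your proof is correct, but it follows a different route from the paper's.

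The paper argues directly. It notes that each star of $\Gg\times\One_{\widehat{\Lambda}}$ is closed under the sum because $\points*{a,a,a}=a$, and that it is a subgroupoid because units of $\widehat{\Lambda}$ act trivially. It then checks invariance by the explicit computation $-(g\times[a,b])+(g\times[a,b])(h\times[b,b])=(-g+g([a,b]\triangleright h))\times[a,a]$, which is exactly your fallback third bullet. Normality of the subgroupoid and the additive normality \eqref{eq:weak_ideal_comm} are left implicit there.

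You instead realise $\Gg\times\One_{\widehat{\Lambda}}$ as the kernel of the projection $p\colon\Gg\rtimes_\triangleright\widehat{\Lambda}\to\widehat{\Lambda}$ onto the coarse quiver skew brace of the heap. All the ideal axioms then come at once from the kernel characterisation proved earlier in the section: normal subgroupoid, normal additive substars, and invariance under $\rightharpoonup$. Your argument is therefore more complete than the paper's, at the price of relying on that proposition and on the heap-induced structure on $\widehat{\Lambda}$.

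You need not worry about the kernel convention. Morphisms in $\QSKB$ are defined simply as groupoid morphisms that are additive on each star, not necessarily strong. Kernels are wide. The proof of the kernel proposition uses only multiplicativity and additivity on stars, so it applies to $p$ even though $p^0$ collapses $\lambda\times a$ onto $a$. In general $p$ is not a morphism of braided groupoids in the stricter sense of \cref{def:morphisms}, since $p^1(x)\ot p^1(y)$ can be defined without $x\ot y$ being defined. This is harmless for the same reason.

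The identification with $\bigsqcup_{a\in\Lambda}\Gg$ is the same in both arguments: the trivial action of $1_a$ makes $g\mapsto g\times[a,a]$ an isomorphism onto each component.
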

\begin{proof}
	Since $\points*{a,a,a} = a$ for all $a$, it is clear that every star of $\Gg\rtimes \One_{\widehat{\Lambda}}$ is closed under the sum. Moreover, one has
	\[ (g\times [a,a]) \cdot (h\times [a,a]) = g([a,a]\triangleright h) \times [a,a]  = gh\times [a,a]  \]
	because every unit $[a,a] = 1_a$ in $\widehat{\Lambda}$ must act trivially (see e.g.\@ \cite[Lemma 4.2 (\textit{ii})]{ferri2025splitandFIT}), hence $\Gg \rtimes \One_{\widehat{\Lambda}}$ is a subgroupoid. Observe that, since every $1_a$ acts trivially, as a sub-quiver skew brace one has indeed $\Gg\rtimes \One_{\widehat{\Lambda}} = \Gg\times \One_{\widehat{\Lambda}} \cong \bigsqcup_{a\in\Lambda }\Gg$.
	
	We finally observe that $\Gg\times \One_{\widehat{\Lambda}}$ is stable under $\rightharpoonup$. One has
	\begin{align*}
		&- (g\times [a,b]) + (g\times [a,b])(h\times [b,b])\\
		&= (-g)\times [a,\points*{a,b,a}]  +  g([a,b]\triangleright h)\times [a,b]\\
		&= (-g+g([a,b]\triangleright h)) \times [a,\points*{\points*{a,b,a},a,b}]\\
		&= (-g+g([a,b]\triangleright h))\times [a,a],
		\end{align*}
		which lies in $\Gg\times \One_{\widehat{\Lambda}}$.	
\end{proof}

\subsection{Categorical semidirect products \textit{à la} Bourn and Janelidze}\label{sec:crossed} The categorical notion of semidirect product, in the sense of Bourn and Janelidze \cite{BournJanelidzeProtomodularityDescSemProd}, is a universal object corresponding to split epimorphisms. In the category $\SKB$ of skew braces, the semidirect product was defined by Facchini and Pompili \cite{FacchiniPompiliSemidirect}. In the category $\Gpd$ of groupoids, a categorical semidirect product was described in \cite{ferri2025splitandFIT}, where it was named `crossed product' to avoid confusion with the classical (and more geometrically motivated) notion of semidirect product defined by Brown \cite{BrownBookGroupoids}. A semidirect product in $\QSKB$ should truss the two respective semidirect products of $\SKB$ and of $\Gpd$. This is what we pursue here. 

\begin{definition}
	A digroup $(A,+,\cdot)$ is the datum of two group structures on the same set, with same unit $1$.
\end{definition}

Skew braces are in particular digroups.

\begin{definition}[{Facchini and Pompili \cite{FacchiniPompiliSemidirect}}] Let $(N,+, \cdot)$ and $(H,+,\cdot)$ be digroups. Suppose given:
	\begin{enumerate}
		\item a right action $\triangleleft$ of $(H,\cdot)$ on $N$, by group automorphisms of $(N,\cdot)$;
		\item a right action $\varphi$ of $(H,+)$ on $N$, by group automorphisms of $(N,+)$;
		\item a morphism of pointed sets $\gamma \colon (H, 1)\to (\Aut_{\Set^*}(N,1), \id)$, $h\mapsto \gamma_h$, where $\Set^*$ is the category of pointed sets.
	\end{enumerate}
The \textit{semidirect product} $H\ltimes N$ is defined as the set $H\times N$, with group operations
\begin{align*} (h,a)\cdot (k, b) & = (hk, a(b\triangleleft h)),\\
(h,a)+ ( k,b) & = (h + k,\gamma_{h+k}^{-1}( \varphi_{k}\gamma_{h}(a) + \gamma_{k}(b))).
  \end{align*}
\end{definition}

It can be proven that this is a digroup \cite[Theorem 4.1]{FacchiniPompiliSemidirect}, and it is the semidirect product in the category of digroups \cite[Theorem 3.1]{FacchiniPompiliSemidirect}. If $N$ and $H$ are skew braces, then $H\ltimes N$ is moreover a skew brace if and only if $\gamma\colon (N,\cdot)\to \Aut_{\Gp}(H, +)$ is a group homomorphism with image in the additive automorphisms of $H$, and the additional compatibility
\begin{equation}\label{eq:FPcomp}\begin{split}
	&\gamma_{k_1k_2 - k_1 + k_1k_3}^{-1}\!\Big(\! \varphi_{-k_1+k_1k_3}\big(\! \gamma_{k_1k_2}(\!(a_1\!\triangleleft\! k_2)a_2) - \gamma_{k_1}\!(a_1)  \!\big)+ \gamma_{k_1k_3}(\!(a_1\!\triangleleft\! k_3)a_3)  \!\Big)\\
	&= (a_1\triangleleft (k_2+k_3))\gamma_{k_2+k_3}^{-1}\Big( \varphi_{k_3}\gamma_{k_2}(a_2) + \gamma_{k_3}(a_3) \Big)
\end{split}\end{equation}
holds for all $a_i\in N$, $k_i\in H$ \cite[Proposition 4.2]{FacchiniPompiliSemidirect}. The condition \eqref{eq:FPcomp} arises naturally from requiring the skew brace compatibility. However, from our quiver-theoretic perspective, we will shed some new light on this rather technical condition (see e.g.\@ \cref{lem:FPcomp_heap_simplified} and \cref{lem:crossed_welldefined}).

Hereafter, we assume that both $N$ and $H$ are skew braces, and $\gamma$ is a homomorphism from $(N,\cdot)$ into the additive automorphisms of $H$.
\begin{remark}
	The inverses of $(h,a)$ with respect to the two group operations of $H\ltimes N$ are, respectively:
	\begin{align*}
		(h,a)^{-1}&= (h^{-1},a^{-1}\triangleleft h^{-1}), \\
		-(h,a)&= \big(-h,\, - \gamma_{-h}^{-1}\left( \varphi_{h}^{-1}\gamma_h(a) \right)\big).
	\end{align*}
	The former is simply the inverse of an element in a semidirect product of groups, while the latter is verified to be the additive inverse via an immediate computation.
\end{remark}

For the groupoidal setting, using a heap-theoretic formulation of these semidirect products will be inevitable, as we shall see in \cref{rem:inevitable}. Thus we first rewrite the additive side of Facchini and Pompili's semidirect product in the heap-theoretic form.
\begin{lemma}\label{lem:facchinipompili_heap}
	The heap structure induced by the group $(H\ltimes N, +)$ reads as follows:
	\begin{align*}&
		\points*{(h_1, a_1), ( h_2, a_2), ( h_3, a_3)} =\\
		& \Big( \points*{h_1, h_2, h_3},\; \gamma_{\points*{h_1, h_2, h_3}}^{-1}\left(  \points*{ \varphi_{h_3}\varphi_{h_2}^{-1}\gamma_{h_1}(a_1),\; \varphi_{h_3}\varphi_{h_2}^{-1}\gamma_{h_2}(a_2), \;\gamma_{h_3}(a_3) } \right) \Big),
	\end{align*}
	where we denote with the same brackets $\points*{h_1, h_2, h_3} = h_1 - h_2 + h_3$ also the heap structure of $(H,+)$.
\end{lemma}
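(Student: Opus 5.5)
Since the heap induced by a group is $\points*{x,y,z}=x-y+z$, the plan is to compute $(h_1,a_1)-(h_2,a_2)+(h_3,a_3)$ directly from the definition of $+$ on $H\ltimes N$ and the formula for $-(h,a)$ in the preceding remark, and then rewrite the result in the stated form. The first component is immediate: it is $h_1-h_2+h_3=\points*{h_1,h_2,h_3}$, because the first coordinate of $+$ is just $+$ in $H$. All the work is in the second component.

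First compute $(h_1,a_1)+\big(-(h_2,a_2)\big)$, with $-(h_2,a_2)=(-h_2,\,b)$ and $b=-\gamma_{-h_2}^{-1}\varphi_{h_2}^{-1}\gamma_{h_2}(a_2)$. By the definition of $+$, its second coordinate is
\[
c=\gamma_{h_1-h_2}^{-1}\big(\varphi_{-h_2}\gamma_{h_1}(a_1)+\gamma_{-h_2}(b)\big).
\]
Since $\gamma_{-h_2}$ is additive, $\gamma_{-h_2}(b)=-\varphi_{h_2}^{-1}\gamma_{h_2}(a_2)$. Because $\varphi$ is an action of $(H,+)$, we have $\varphi_{-h_2}=\varphi_{h_2}^{-1}$. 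Hence
\[
\gamma_{h_1-h_2}(c)=\varphi_{h_2}^{-1}\big(\gamma_{h_1}(a_1)-\gamma_{h_2}(a_2)\big),
\]
using that $\varphi_{h_2}^{-1}$ is additive.

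Then add $(h_3,a_3)$. The second coordinate is
\[
\gamma_{\points*{h_1,h_2,h_3}}^{-1}\big(\varphi_{h_3}\gamma_{h_1-h_2}(c)+\gamma_{h_3}(a_3)\big).
\]
Substituting the expression above gives
\[
\gamma_{\points*{h_1,h_2,h_3}}^{-1}\big(\varphi_{h_3}\varphi_{h_2}^{-1}\gamma_{h_1}(a_1)-\varphi_{h_3}\varphi_{h_2}^{-1}\gamma_{h_2}(a_2)+\gamma_{h_3}(a_3)\big).
\]
The middle expression is exactly the heap bracket $\points*{\cdot,\cdot,\cdot}$ of $(N,+)$ applied to the three terms in the statement, which finishes the computation.

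The only delicate point is the bookkeeping of conventions. Whether $\varphi$ is a right action decides whether the composite is written $\varphi_{h_3}\varphi_{h_2}^{-1}$ or in the reverse order. I will fix the convention from the definition of $+$, namely $\varphi_{k}\gamma_h(a)$ in the sum $(h,a)+(k,b)$, and use only $\varphi_{-h}=\varphi_h^{-1}$ together with additivity of each $\varphi_h$ and $\gamma_h$. No composition law for $\varphi$ beyond inverses is needed, so the order issue should not actually arise. I will also double-check the additive inverse formula against the definition of $+$ before using it.
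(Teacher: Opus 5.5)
Your computation is correct and is essentially the paper's proof. The paper also applies the definition of $+$ twice: it writes out a general triple sum $(k_1,b_1)+(k_2,b_2)+(k_3,b_3)$ and then substitutes $(k_2,b_2)=-(h_2,a_2)$, whereas you perform the two additions in sequence. The only facts you use are additivity of each $\gamma_h$, additivity of each $\varphi_h$, and $\varphi_{-h}=\varphi_h^{-1}$, all of which are covered by the paper's standing assumptions.
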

\begin{proof}
	Using the definition of $(H\ltimes N, +)$ twice, it is immediate to compute
	\begin{align*}
		& (k_1,b_1) + (k_2,b_2) + (k_3,b_3) = \\
		& \Big( k_1+k_2+k_3,\; \gamma^{-1}_{k_1+k_2+k_3}\big( \varphi_{k_3}\varphi_{k_2}\gamma_{k_1}(b_1) + \varphi_{k_3}\gamma_{k_2}(b_2) + \gamma_{k_3}(b_3) \big) \Big).
	\end{align*} One concludes by substituting $(k_1, b_1) = (h_1, a_1)$,  $(k_2, b_2) = -(h_2, a_2)$,  $(k_3, b_3) = (h_3, a_3)$.
\end{proof}
Before moving on, let us rewrite the compatibility \eqref{eq:FPcomp} in a way that will be more useful in the groupoidal context. In order to lighten the notation, let us set
\[ \TT_{h_1, h_2, h_3}(b_1, b_2, b_3) = \gamma^{-1}_{\points*{h_1, h_2, h_3}}\points*{\varphi_{h_3}\varphi_{h_2}^{-1}\gamma_{h_1}(b_1), \varphi_{h_3}\varphi_{h_2}^{-1}\gamma_{h_2}(b_2), \gamma_{h_3}(b_3)}. \]
\begin{lemma}
	The compatibility \eqref{eq:FPcomp} can be rewritten as
	%\begin{equation}\label{eq:FPcomp_heap}\begin{split}
			%&\gamma^{-1}_{\points*{h_1, h_2, h_3}}\Big(\points*{ \varphi_{h_3}\varphi_{h_2}^{-1}\gamma_{h_1}(( \ell \triangleleft h_2^{-1}h_1)b_1) , \varphi_{h_3}\varphi_{h_2}^{-1}\gamma_{h_2}(\ell), \gamma_{h_3}((\ell\triangleleft h_2^{-1}\!h_3)b_3)}\Big)\\
			%&= \Big(\! \ell \triangleleft \points*{h_2^{-1}\!h_1, 1, h_2^{-1}h_3} \!\Big)\gamma^{-1}_{\points*{h_2^{-1}\!h_1, 1, h_2^{-1}\!h_3}} \Big(\!\points*{\varphi_{h_2^{-1}\!h_3}\gamma_{h_2^{-1}\!h_1}(b_1), 1, \gamma_{h_2^{-1}\!h_3}(b_3)}\!\Big).
	%\end{split}\end{equation}
	\begin{equation}\label{eq:FPcomp_heap}\begin{split}
		& \TT_{h_1, h_2, h_3}\Big((m\triangleleft h_2^{-1}h_1)b_1, m, (m\triangleleft h_2^{-1}h_3)b_3)  \Big)\\
		&= (m\triangleleft \points*{h_2^{-1}h_1, 1, h_2^{-1}h_3})\TT_{h_2^{-1}h_1, 1, h_2^{-1}h_3}(b_1, 1, b_3)
			\end{split}\end{equation}
for all $h_i \in H$, $m, b_1, b_3 \in N$.
\end{lemma}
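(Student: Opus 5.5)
The plan is to avoid manipulating \eqref{eq:FPcomp} directly. By \cite[Proposition 4.2]{FacchiniPompiliSemidirect}, \eqref{eq:FPcomp} is exactly the skew brace compatibility $x(y+z)=xy-x+xz$ in the digroup $H\ltimes N$. By the equivalence recalled in the definition of quiver skew braces (one vertex case), this compatibility is equivalent to the left truss distributivity
\[ x\points*{y,z,w} = \points*{xy,xz,xw} \qquad \text{for all } x,y,z,w\in H\ltimes N. \]
So it suffices to show that this heap identity, written in coordinates using \cref{lem:facchinipompili_heap}, is \eqref{eq:FPcomp_heap} after a bijective change of variables.

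\emph{Step 1: reduce to a fixed middle entry.} Left multiplication by a fixed element is a bijection of $H\ltimes N$, and heap distributivity holds for all quadruples iff it holds for those with $z=1$. Indeed, write a general quadruple as $x'=xz$, $y=z^{-1}y'$, $z=1$, $w=z^{-1}w'$. Then $x'\points*{z^{-1}y',1,z^{-1}w'}$ can be compared with $\points*{x'z^{-1}y',x',x'z^{-1}w'}$. Conversely, every instance of the general identity arises this way. Equivalently, the identity $x\points*{y,1,w}=\points*{xy,x,xw}$ for all $x,y,w$ is just $x(y+w)=xy-x+xw$. So I will check the identity for the quadruple $(x,y,1,w)$ with
\[ x=(h_2,m),\quad y=(h_2^{-1}h_1,b_1),\quad w=(h_2^{-1}h_3,b_3). \]
The products $xy$, $x$, $xw$ have first coordinates $h_1,h_2,h_3$, and second coordinates of the shape $(m\triangleleft \cdot)\,b_i$ (resp.\ $m$). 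If the convention for $\cdot$ places the action on the other factor, I will instead choose the second coordinate of $x$ as the appropriate twist of $m$ so that exactly the arguments appearing in \eqref{eq:FPcomp_heap} come out. Since $(h_1,h_2,h_3,m,b_1,b_3)$ ranges over all of $H^3\times N^3$ as $(x,y,w)$ does, nothing is lost.

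\emph{Step 2: compute both sides with \cref{lem:facchinipompili_heap}.} The right-hand side $\points*{xy,x,xw}$ has first coordinate $\points*{h_1,h_2,h_3}$ and second coordinate $\TT_{h_1,h_2,h_3}\big((m\triangleleft h_2^{-1}h_1)b_1,m,(m\triangleleft h_2^{-1}h_3)b_3\big)$. For the left-hand side, first $\points*{y,1,w}=\big(\points*{h_2^{-1}h_1,1,h_2^{-1}h_3},\TT_{h_2^{-1}h_1,1,h_2^{-1}h_3}(b_1,1,b_3)\big)$, using $\gamma_1=\varphi_1=\id$. Multiplying by $x$ then gives first coordinate $h_2\points*{h_2^{-1}h_1,1,h_2^{-1}h_3}$. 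This equals $\points*{h_1,h_2,h_3}$ by the heap distributivity of the skew brace $H$. The second coordinate is $(m\triangleleft\points*{h_2^{-1}h_1,1,h_2^{-1}h_3})\,\TT_{h_2^{-1}h_1,1,h_2^{-1}h_3}(b_1,1,b_3)$.

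\emph{Step 3: conclude.} The first coordinates always agree, so the heap identity holds iff the second coordinates agree, and that is precisely \eqref{eq:FPcomp_heap}. Combined with Step 1, this gives the equivalence with \eqref{eq:FPcomp}.

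The main obstacle I expect is bookkeeping in Step 1 and Step 2. I need to match the multiplication convention of $H\ltimes N$, namely which factor the action $\triangleleft$ is applied to, with the exact arguments $(m\triangleleft h_2^{-1}h_i)b_i$ in the statement. I also need to make sure the substitution is genuinely bijective. I would first try the choice above literally. If the twist lands on the wrong side, I would replace $m$ by $m\triangleleft h_2^{\pm1}$ in $x$, or swap the roles of the coordinates, until the products reproduce the stated arguments.
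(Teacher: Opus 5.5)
Your argument is correct and is the paper's proof: your choice $x=(h_2,m)$, $y=(h_2^{-1}h_1,b_1)$, $w=(h_2^{-1}h_3,b_3)$ is exactly the paper's substitution $k_1=h_2$, $k_2=h_2^{-1}h_1$, $k_3=h_2^{-1}h_3$, $a_1=m$, $a_2=b_1$, $a_3=b_3$. The two sides of \eqref{eq:FPcomp} are the second coordinates of $xy-x+xw$ and $x(y+w)$, so the detour through general heap distributivity in Step 1 is unnecessary; the paper simply simplifies these coordinates directly rather than going through \cref{lem:facchinipompili_heap}. The multiplication convention is forced by \eqref{eq:FPcomp} itself: it must be $(h,a)(k,b)=(hk,(a\triangleleft k)b)$, not the $(hk,a(b\triangleleft h))$ displayed in the definition, because the term $\gamma_{k_1k_2}((a_1\triangleleft k_2)a_2)$ is the second coordinate of $(k_1,a_1)(k_2,a_2)$. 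So your literal first choice is the right one, and the fallback of twisting $m$ can be dropped.
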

\begin{proof}
	Immediate computations, by substituting $k_1 = h_2$, $k_2 = h_2^{-1}h_1$, $k_3 = h_2^{-1}h_3$, $a_1 = m$, $a_2 = b_1$, $a_3 = b_3$.
\end{proof}
The above relation can be further simplified.
\begin{lemma}\label{lem:TTiskinvariant}
	Under the assumption of \eqref{eq:FPcomp}, or equivalently \eqref{eq:FPcomp_heap}, one has $ \TT_{kh_1, kh_2, kh_3} = \TT_{h_1, h_2, h_3} $ for all $h_i, k\in H$.
\end{lemma}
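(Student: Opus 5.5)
The plan is to read \eqref{eq:FPcomp_heap} as saying that $\TT_{h_1,h_2,h_3}$, evaluated at an arbitrary triple, depends on $h_1,h_2,h_3$ only through the ``relative positions'' $h_2^{-1}h_1$ and $h_2^{-1}h_3$. These are invariant under left multiplication by $k$, since $(kh_2)^{-1}(kh_i)=h_2^{-1}h_i$.

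\textbf{Step 1: free the outer arguments.} In \eqref{eq:FPcomp_heap} the outer arguments are twisted by factors involving $m$. To remove this, fix $h_1,h_2,h_3\in H$ and $m\in N$, and take arbitrary $c_1,c_3\in N$. Substitute
\[ b_1=(m\triangleleft h_2^{-1}h_1)^{-1}c_1,\qquad b_3=(m\triangleleft h_2^{-1}h_3)^{-1}c_3 \]
into \eqref{eq:FPcomp_heap}. Because $(N,\cdot)$ is a group, left multiplication by a fixed element is a bijection of $N$. Hence, as $b_1,b_3$ range over $N$, so do $c_1,c_3$. The left-hand side becomes $\TT_{h_1,h_2,h_3}(c_1,m,c_3)$, and \eqref{eq:FPcomp_heap} gives
\[ \TT_{h_1,h_2,h_3}(c_1,m,c_3)=\big(m\triangleleft \points*{u_1,1,u_3}\big)\,\TT_{u_1,1,u_3}\big((m\triangleleft u_1)^{-1}c_1,\,1,\,(m\triangleleft u_3)^{-1}c_3\big), \]
where $u_1=h_2^{-1}h_1$ and $u_3=h_2^{-1}h_3$.

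\textbf{Step 2: observe the dependence.} The right-hand side involves $h_1,h_2,h_3$ only through $u_1$ and $u_3$. The remaining data are the fixed arguments $m,c_1,c_3$ and the fixed structure maps $\triangleleft,\gamma,\varphi$.

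\textbf{Step 3: compare with $kh_i$.} Apply the same identity to the triple $(kh_1,kh_2,kh_3)$ with the same $m,c_1,c_3$. The relative positions are $(kh_2)^{-1}kh_1=u_1$ and $(kh_2)^{-1}kh_3=u_3$, so the right-hand sides coincide. Therefore $\TT_{kh_1,kh_2,kh_3}(c_1,m,c_3)=\TT_{h_1,h_2,h_3}(c_1,m,c_3)$ for all $c_1,m,c_3\in N$, which is the claimed equality of maps.

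There is no real obstacle. The only point needing care is Step 1: the middle argument of $\TT$ in \eqref{eq:FPcomp_heap} is the same $m$ that appears in the twists, so $m$ is arbitrary. The reparametrisation of $b_1,b_3$ is a bijection of $N$, so every triple $(c_1,m,c_3)$ is attained and no generality is lost. One should also confirm that the lemma's hypothesis allows \eqref{eq:FPcomp_heap} to be used for all $h_i\in H$ and $m,b_1,b_3\in N$, which is how it is stated.
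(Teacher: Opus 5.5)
Your proof is correct, but it takes a different route from the paper's.

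The paper never manipulates \eqref{eq:FPcomp_heap} directly. It reads $\TT_{h_1,h_2,h_3}(b_1,b_2,b_3)$ as $\points*{h_1,h_2,h_3}^{-1}\points*{h_1b_1,h_2b_2,h_3b_3}$, computed inside $H\ltimes N$. It then invokes Facchini--Pompili's result that \eqref{eq:FPcomp} makes $H\ltimes N$ a skew brace. Finally it uses left truss distributivity $\points*{kx,ky,kz}=k\points*{x,y,z}$, in both $H$ and $H\ltimes N$, so that the factor $k^{-1}k$ cancels.

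You instead reparametrise $b_1,b_3$ using bijectivity of left multiplication in $(N,\cdot)$. This exhibits $\TT_{h_1,h_2,h_3}$ explicitly as a function of $h_2^{-1}h_1$ and $h_2^{-1}h_3$ alone. Your route has three advantages:
\begin{enumerate}
\item It is self-contained: it does not rely on the external theorem that the compatibility yields a skew brace, nor on the heap form of the brace axiom with a nontrivial middle term.
\item It gives an explicit reduction formula: $\TT$ is determined by its values $\TT_{u_1,1,u_3}(c_1,1,c_3)$.
\item It makes clear that the lemma is just \eqref{eq:FPcomp_heap} rephrased.
\end{enumerate}
The paper's route gives a conceptual reason instead: $\TT$ is the $N$-component of the heap operation, and left multiplication by $H$ is a heap endomorphism. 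That viewpoint of writing heap expressions as products is the one reused in the groupoid setting, e.g.\ in the proof of \cref{thm:is_semidirect}.
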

\begin{proof} By definition of the heap structure in $H\ltimes N$, one has 
	\[  \TT_{h_1, h_2, h_3}(b_1, b_2, b_3) = \points*{h_1, h_2, h_3}^{-1}\points*{h_1b_1, h_2b_2, h_3b_3}. \]
	Since \eqref{eq:FPcomp} implies that $H\ltimes N$ is a skew brace---and hence the product distributes with respect to the heap operation---one has
	\begin{align*}
		&\TT_{kh_1, kh_2, kh_3}(b_1, b_2, b_3)\\
		&= \points*{kh_1, kh_2, kh_3}^{-1}\points*{kh_1b_1, kh_2b_2, kh_3b_3}\\
		&= \points*{h_1, h_2, h_3}^{-1}k^{-1}k\points*{h_1b_1, h_2b_2, h_3b_3}\\
		&= \TT_{h_1, h_2, h_3}(b_1, b_2, b_3).\qedhere
	\end{align*}
\end{proof}
\begin{lemma}\label{lem:FPcomp_heap_simplified}
	The relation \eqref{eq:FPcomp_heap}, and hence \eqref{eq:FPcomp}, is equivalent to 
	\begin{equation}\label{eq:FPcomp_heap_simplified}
		\begin{split}&\TT_{h_1, h_2, h_3}\Big((\ell\triangleleft h_1)b_1, \ell\triangleleft h_2, (\ell\triangleleft h_3) b_3\Big)\\ &= (\ell\triangleleft \points* {h_1, h_2, h_3}) \TT_{h_1, h_2, h_3}(b_1, 1, b_3). \end{split}
	\end{equation}
\end{lemma}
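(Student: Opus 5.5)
The plan is to pass between \eqref{eq:FPcomp_heap} and \eqref{eq:FPcomp_heap_simplified} with the substitution $m=\ell\triangleleft h_2$, combined with the $k$-invariance of $\TT$ from \cref{lem:TTiskinvariant}. Since $\triangleleft$ is a right action of $(H,\cdot)$, we have
\[ m\triangleleft h_2^{-1}h_i=(\ell\triangleleft h_2)\triangleleft h_2^{-1}h_i=\ell\triangleleft h_i, \]
and similarly $m\triangleleft \points*{h_2^{-1}h_1,1,h_2^{-1}h_3}=\ell\triangleleft h_2\points*{h_2^{-1}h_1,1,h_2^{-1}h_3}$. The map $\ell\mapsto m$ is a bijection of $N$, so quantifying over all $m$ is the same as quantifying over all $\ell$.

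For the direction \eqref{eq:FPcomp_heap}$\Rightarrow$\eqref{eq:FPcomp_heap_simplified}, I take $H\ltimes N$ to be a skew brace, so that \cref{lem:TTiskinvariant} applies.
\begin{itemize}
\item With $k=h_2^{-1}$, the lemma gives $\TT_{h_2^{-1}h_1,1,h_2^{-1}h_3}=\TT_{h_1,h_2,h_3}$.
\item Since $(H,+,\cdot)$ is a skew brace, the left truss distributivity gives $h_2\points*{h_2^{-1}h_1,1,h_2^{-1}h_3}=\points*{h_1,h_2,h_3}$. This is the one-vertex case of \eqref{eq:heapdistr}.
\end{itemize}
Substituting both facts into \eqref{eq:FPcomp_heap} with $m=\ell\triangleleft h_2$ turns it literally into \eqref{eq:FPcomp_heap_simplified}.

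For the converse, the main obstacle is that \cref{lem:TTiskinvariant} was derived assuming \eqref{eq:FPcomp}, so I cannot use it without circularity. Instead I would obtain the invariance $\TT_{h_2^{-1}h_1,1,h_2^{-1}h_3}=\TT_{h_1,h_2,h_3}$ from \eqref{eq:FPcomp_heap_simplified} itself, in two steps.
\begin{enumerate}
\item Apply \eqref{eq:FPcomp_heap_simplified} to the triple $(h_2^{-1}h_1,1,h_2^{-1}h_3)$ with $\ell$ replaced by $\ell\triangleleft h_2$. This rewrites the right side of \eqref{eq:FPcomp_heap} as
\[ \TT_{h_2^{-1}h_1,1,h_2^{-1}h_3}\big((\ell\triangleleft h_1)b_1,\;\ell\triangleleft h_2,\;(\ell\triangleleft h_3)b_3\big). \]
\item This agrees with the left side of \eqref{eq:FPcomp_heap} provided $\TT$ is invariant under left translation by $h_2^{-1}$.
\end{enumerate}

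To get that invariance, I would first try specialising \eqref{eq:FPcomp_heap_simplified} at $b_1=b_3=1$. This gives $\TT_{h_1,h_2,h_3}(\ell\triangleleft h_1,\ell\triangleleft h_2,\ell\triangleleft h_3)=\ell\triangleleft\points*{h_1,h_2,h_3}$, which expresses how $\TT$ transforms under the diagonal action. Combining it with the definition of $\TT$ through $\gamma$ and $\varphi$ might then yield the translation invariance. If this fails, the fallback is to read the lemma as an equivalence under the standing hypotheses: the skew brace structure of $H$ and $\gamma$ being a homomorphism into additive automorphisms. In that reading I would argue that both conditions characterise $H\ltimes N$ being a skew brace, with \eqref{eq:FPcomp_heap_simplified} read as truss distributivity $\ell\cdot\points*{x,y,z}=\points*{\ell x,\ell y,\ell z}$ for pure elements $(1,\ell)$ and general $(h_i,b_i)$. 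Together with distributivity for elements $(k,1)$, which holds by the $H$-brace structure and $\gamma$, this recovers full distributivity because every element factors as $(h,1)(1,b)$.
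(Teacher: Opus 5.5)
\paragraph{Assessment.} Your direction \eqref{eq:FPcomp_heap}$\Rightarrow$\eqref{eq:FPcomp_heap_simplified} is the paper's argument. You substitute $m=\ell\triangleleft h_2$, then use \cref{lem:TTiskinvariant} with $k=h_2^{-1}$ together with $h_2\points*{h_2^{-1}h_1,1,h_2^{-1}h_3}=\points*{h_1,h_2,h_3}$. For the converse, your step (1) correctly reduces everything to the invariance $\TT_{h_1,h_2,h_3}=\TT_{h_2^{-1}h_1,1,h_2^{-1}h_3}$, and you are right that \cref{lem:TTiskinvariant} cannot be borrowed. The gap is that neither of your proposed sources for this invariance works.

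Each instance of \eqref{eq:FPcomp_heap_simplified} has the same $\TT_{h_1,h_2,h_3}$ on both sides. It therefore constrains how $\TT$ interacts with left multiplication by $N$, not how it behaves under $h_i\mapsto kh_i$. In your fallback, the claim that the elements $(k,1)$ distribute ``by the $H$-brace structure and $\gamma$'' is false. Their distributivity is exactly $\TT_{kh_1,kh_2,kh_3}=\TT_{h_1,h_2,h_3}$. Using $\gamma_{kh}=\gamma_k\gamma_h$ and $\points*{kh_1,kh_2,kh_3}=k\points*{h_1,h_2,h_3}$, this amounts to
\[ \gamma_k^{-1}\varphi_{kh_3}\varphi_{kh_2}^{-1}\gamma_k=\varphi_{h_3}\varphi_{h_2}^{-1}\qquad\text{for all } k,h_2,h_3\in H. \]
This is a compatibility between $\varphi$ and $\gamma$ that none of the standing hypotheses supply. (Also, \eqref{eq:FPcomp_heap_simplified} gives distributivity of $(1,\ell)$ only over triples whose middle entry has the form $(h_2,1)$.)

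\paragraph{The gap cannot be closed.} The implication \eqref{eq:FPcomp_heap_simplified}$\Rightarrow$\eqref{eq:FPcomp_heap} is false under the standing hypotheses. Consider the following data.
\begin{itemize}
\item $H=(\Z/2\Z)^2$ with componentwise addition and $(a,b)(c,d)=(a+c+bd,\,b+d)$. This is a skew brace with multiplicative group $\Z/4\Z$.
\item $N=\Z/3\Z$ as a trivial brace, written additively, so the unit $1$ is $0$.
\item $\triangleleft$ and $\gamma$ trivial, and $\varphi_h$ multiplication by $\chi(h)$, where $\chi(a,b)=(-1)^a$.
\end{itemize}
Then $\TT_{h_1,h_2,h_3}(x,y,z)=\chi(h_2)\chi(h_3)(x-y)+z$. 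Since $N$ is abelian, \eqref{eq:FPcomp_heap_simplified} holds, and even \eqref{eq:FPcomp_TRUE} does. However, \eqref{eq:FPcomp_heap} with $m=b_3=0$ and $b_1=1$ reads $\chi(h_2)\chi(h_3)=\chi(h_2^{-1}h_3)$. This fails for $h_2=(0,1)$, $h_3=(0,0)$, because $h_2^{-1}=(1,1)$.

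So only the direction you proved holds as stated. The paper's sentence ``it is clear that \eqref{eq:FPcomp_heap_simplified} implies \eqref{eq:FPcomp_heap}'' passes over exactly the invariance you flagged. If translation invariance of $\TT$ is added as a hypothesis, your step (1) does complete the converse.
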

\begin{proof}
	It is clear that \eqref{eq:FPcomp_heap_simplified} implies \eqref{eq:FPcomp_heap}. Conversely, assuming \eqref{eq:FPcomp_heap}, \cref{lem:TTiskinvariant} holds true. In order to obtain \eqref{eq:FPcomp_heap_simplified} from \eqref{eq:FPcomp_heap}, then, it suffices to substitute $\ell = m\triangleleft h_2^{-1}$ and apply \cref{lem:TTiskinvariant}.
\end{proof}
\begin{remark}
	The relation
	\begin{equation}\label{eq:FPcomp_TRUE}
		\TT_{h_1, h_2, h_3} \Big( (\ell\triangleleft h_1)b_1, (\ell\triangleleft h_2)b_2, (\ell\triangleleft h_3)b_3 \Big) = (\ell\triangleleft \points*{h_1, h_2, h_3}) \TT_{h_1, h_2, h_3}(b_1, b_2, b_3)
	\end{equation}
	clearly implies \eqref{eq:FPcomp_heap_simplified}, and hence \eqref{eq:FPcomp_heap} and \eqref{eq:FPcomp}.
\end{remark}

We now briefly recall the description of the categorical semidirect product in $\Gpd$. Let $\Nn$ and $\Hh$ be groupoids, such that $\Hh^0\subseteq \Nn^0$ and every connected component of $\Nn$ contains exactly one vertex of $\Hh$. Let $\underline{\Nn} = \Nn^\circlearrowright(\Hh^0, \Hh^0)$, let $\triangleright$ be a left semistrong action of $\Hh$ on $\underline{\Nn}$ (see \cite[\S 4.1]{ferri2025splitandFIT}), let $\underline{a}\triangleleft h = h^{-1}\triangleright \underline{a}$. For clarity, we reserve the underline notation for the elements of $\underline{\Nn}$. With respect to these two actions $\triangleright$ and $\triangleleft$, suppose that $\underline{\Nn}$ satisfies the module bialgebra conditions.
\begin{definition}
	We define the \textit{balanced tensor product} $\Nn\otb \Hh\otb \Nn$ as the quiver
	\[ (\Nn\ot \Hh\ot \Nn ) / (\sim, =) \]
	where $\sim$ is generated by the relations
	\[ a\underline{b}\ot h\ot \underline{c}d \sim a\underline{b}(h\triangleright \underline{c})\ot h\ot d\sim a\ot h\ot (\underline{b}\triangleleft h)\underline{c}d .\]
\end{definition}
\begin{definition}
	The \textit{categorical semidirect product}, or \textit{crossed product} of $\Nn$ and $\Hh$ is the groupoid $\Nn\rcross{}{}\Hh\lcross{}{}\Nn$ having underlying quiver $\Nn\otb \Hh\otb \Nn$, multiplication
	\begin{align*} (a_1\otb h_1\otb b_1)(a_2\otb h_2\otb b_2) &= a_1 (h_1\triangleright \underline{b_1a_2})\otb h_1h_2\otb b_2\\
		&= a_1\otb h_1h_2 \otb (\underline{b_1a_2}\triangleleft h_2)b_2, \end{align*}
	and units
	\[ 1_\lambda=  a\otb 1_\mu\otb a^{-1} , \]
	where $\mu$ is the unique element $\mu\in \Hh^0$ that lies in the same connected component of $\Nn$ as $\lambda$, and $a$ is any arrow in $\Nn(\lambda,\mu)$. It can be proven that $1_\lambda$ does not depend on the choice of $a$.
\end{definition}

We now merge the previous notions in the following definition.

\begin{definition}\label{def:crossed}
	Let $\Nn$ and $\Hh$ be quiver skew braces (with the usual notations for the additive and multiplicative structures). Let $\Hh^0 \subseteq \Nn^0$, such that every connected component of $\Nn$ contains exactly one vertex of $\Hh$. Define $\underline{\Nn}$ as above. Suppose that there are:
	\begin{enumerate}
		\item a left semistrong action $\triangleright$ of $(\Hh,\cdot)$ on $(\underline{\Nn}, \cdot)$ by groupoid homomorphisms (so that, with the right action $\triangleleft$ defined as above, $\underline{\Nn}$ becomes a bimodule algebra);
		\item right actions $\varphi$ by group automorphisms of $(\St_\Hh(\lambda), +)$ on $(\St_\Nn(\lambda), +)$ for all $\lambda \in \Hh^0$;
		\item a semistrong action $\gamma$ of $(\Hh,\cdot)$ on $\underline{\Nn}$, such that $\gamma_h\colon \St_\Nn(\target(h))\to\St_\Nn(\source(h))$ is an additive homomorphism for all $h\in\Hh^1$. 
	\end{enumerate}
	Suppose moreover that the compatibility \eqref{eq:FPcomp_TRUE} holds for all $h_1, h_2, h_3\in\St_\Hh(\lambda)$, $\lambda\in\Hh^0$, $b_i\in \St_\Nn(\target(h_i))$, and $\ell \in \Nn_\lambda$.
	
	The \textit{categorical semidirect product} $\Nn\rcross{}{}\Hh\lcross{}{}\Nn$ is the groupoid $\Nn\rcross{}{}\Hh\lcross{}{}\Nn$ with the additive structure 
	\begin{align*}
		&\points*{a_1\otb h_1\otb b_1 , \;a_2\otb h_2\otb b_2,\; a_3\otb h_3\otb b_3}= \\
		&a_1 \otb \points*{h_1, h_2, h_3} \otb \gamma_{\points*{h_1, h_2, h_3}}^{-1}\Big(\Big.\\
		& \quad \Big.\points*{ \varphi_{h_3}\varphi_{h_2}^{-1}\gamma_{h_1}(b_1),\, \varphi_{h_3}\varphi_{h_2}^{-1}\gamma_{h_2}((\underline{a_1^{-1}a_2}\triangleleft h_2)b_2),\, \gamma_{h_3}((\underline{a_1^{-1}a_3}\triangleleft h_3)b_3)   }\Big)\\
		&=a_1\otb \points*{h_1, h_2, h_3}\otb \TT_{h_1, h_2, h_3} \Big(b_1,\; (\underline{a_1^{-1}a_2}\triangleleft h_2)b_2, \; (\underline{a_1^{-1}a_3}\triangleleft h_3)b_3 \Big),
	\end{align*}
	where the notation $\TT_{h_1, h_2, h_3}(x,y,z)$ is defined as before, and we observe that its definition makes as much sense in the quiver-theoretic context.
\end{definition}

\Cref{def:crossed} comes with many verifications to be done.

\begin{lemma}\label{lem:independentofa_1}
	The ternary operation of the semidirect product was written by choosing $a_1$ at the beginning, but we could have given an equivalent definition using $a_2$ or $a_3$. Indeed, one has
	\begin{align*}
		& a_1\otb \points*{h_1, h_2, h_3}\otb \TT_{h_1, h_2, h_3} (b_1, \; (\underline{a_1^{-1}a_2}\triangleleft h_2)b_2,\; (\underline{a_1^{-1}a_3}\triangleleft h_3)b_3)\\
		&= a_2\otb \points*{h_1, h_2, h_3}\otb \TT_{h_1, h_2, h_3} ((\underline{a_2^{-1}a_1}\triangleleft h_1)b_1, \; b_2,\; (\underline{a_2^{-1}a_3}\triangleleft h_3)b_3)\\
		&= a_3\otb \points*{h_1, h_2, h_3}\otb \TT_{h_1, h_2, h_3} ((\underline{a_3^{-1}a_1}\triangleleft h_1)b_1, \; (\underline{a_3^{-1}a_2}\triangleleft h_2)b_2,\; b_3).
	\end{align*}
\end{lemma}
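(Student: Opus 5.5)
It suffices to prove the first equality; the second follows by the same argument with the roles of $a_2$ and $a_3$ exchanged, or by applying the first equality twice.

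The plan is to move the prefactor $\underline{a_2^{-1}a_1}$ across the balanced tensor product using the relation $\sim$ defining $\Nn\otb\Hh\otb\Nn$. Since the $a_i$ appear as first tensor factors of elements of the same star $\St(\lambda)$ at a common source vertex, and each connected component of $\Nn$ contains exactly one vertex of $\Hh$, the arrow $\underline{a_1^{-1}a_2}$ is a loop in $\underline{\Nn}$ at the vertex $\mu=\source(h_i)\in\Hh^0$. Write $\ell=\underline{a_2^{-1}a_1}\in\Nn_\mu$ and $h=\points*{h_1,h_2,h_3}$. Then $a_1 = a_2\ell$, and by the defining relation of $\sim$ we get
\[ a_1\otb h\otb X = a_2\ell\otb h\otb X = a_2\otb h\otb (\ell\triangleleft h)X \]
for any admissible $X$. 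So the whole claim reduces to the identity
\[ (\ell\triangleleft h)\,\TT_{h_1,h_2,h_3}\big(b_1,(\ell^{-1}\triangleleft h_2)b_2,(\underline{a_1^{-1}a_3}\triangleleft h_3)b_3\big) = \TT_{h_1,h_2,h_3}\big((\ell\triangleleft h_1)b_1, b_2, (\underline{a_2^{-1}a_3}\triangleleft h_3)b_3\big). \]

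This identity should follow from the compatibility \eqref{eq:FPcomp_TRUE}, which is assumed in \cref{def:crossed}. Apply it with this $\ell$ and the triple
\[ \big(b_1,\ (\ell^{-1}\triangleleft h_2)b_2,\ (\underline{a_1^{-1}a_3}\triangleleft h_3)b_3\big). \]
The left-hand side of \eqref{eq:FPcomp_TRUE} then has entries $(\ell\triangleleft h_1)b_1$, $(\ell\triangleleft h_2)(\ell^{-1}\triangleleft h_2)b_2 = b_2$, and $(\ell\triangleleft h_3)(\underline{a_1^{-1}a_3}\triangleleft h_3)b_3 = (\underline{a_2^{-1}a_1a_1^{-1}a_3}\triangleleft h_3)b_3 = (\underline{a_2^{-1}a_3}\triangleleft h_3)b_3$. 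These simplifications use that $\triangleleft$ acts by groupoid homomorphisms on $\underline{\Nn}$, i.e.\ the module algebra property from item (1) of \cref{def:crossed}. This is exactly the required identity.

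The main obstacle is well-definedness bookkeeping rather than algebra. One has to check that:
\begin{enumerate}
\item each $\underline{a_i^{-1}a_j}$ really lies in $\underline{\Nn}$, as a loop at the correct vertex of $\Hh^0$;
\item all the products such as $(\ell\triangleleft h_2)b_2$ are composable, using semistrongness of $\triangleleft$;
\item the relation $\sim$ applies with the underlined factor on the correct side.
\end{enumerate}
There is also a possible subtlety: the representative $a_1$ of a class in the balanced tensor product is not unique. The argument above shows that the formula transforms correctly when $a_1$ is changed by an element of $\underline{\Nn}$, and this same computation should serve to prove independence of representatives, presumably in the next lemma.
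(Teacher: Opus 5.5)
Your proposal is correct and is essentially the paper's proof. You rewrite $a_1 = a_2\underline{a_2^{-1}a_1}$, slide the loop $\ell=\underline{a_2^{-1}a_1}$ across the balanced tensor product to obtain the prefactor $\ell\triangleleft\points*{h_1,h_2,h_3}$ on the third factor, and absorb it into $\TT_{h_1,h_2,h_3}$ via \eqref{eq:FPcomp_TRUE}, handling the $a_3$ case the same way. Your explicit checks that $(\ell\triangleleft h_2)(\underline{a_1^{-1}a_2}\triangleleft h_2)b_2=b_2$ and $(\ell\triangleleft h_3)(\underline{a_1^{-1}a_3}\triangleleft h_3)b_3=(\underline{a_2^{-1}a_3}\triangleleft h_3)b_3$ just spell out a step the paper leaves implicit.
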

\begin{proof}
	This result makes crucial use of \eqref{eq:FPcomp_TRUE}. One has
	\begin{align*}
		& a_1\otb \points*{h_1, h_2, h_3}\otb \TT_{h_1, h_2, h_3} (b_1, \; (\underline{a_1^{-1}a_2}\triangleleft h_2)b_2,\; (\underline{a_1^{-1}a_3}\triangleleft h_3)b_3)\\
		&= a_2 \underline{a_2^{-1}a_1}\otb \points*{h_1, h_2, h_3}\otb \TT_{h_1, h_2, h_3} (b_1, \; (\underline{a_1^{-1}a_2}\triangleleft h_2)b_2,\; (\underline{a_1^{-1}a_3}\triangleleft h_3)b_3)\\
		&= a_2 \otb \points*{h_1, h_2, h_3}\otb\\ &\hspace{2em} (\underline{a_2^{-1}a_1}\triangleleft \points*{h_1, h_2, h_3})\TT_{h_1, h_2, h_3} \big(b_1, \; (\underline{a_1^{-1}a_2}\triangleleft h_2)b_2,\; (\underline{a_1^{-1}a_3}\triangleleft h_3)b_3\big)\\
		\overset{\eqref{eq:FPcomp_TRUE}}&{=}a_2\otb \points*{h_1, h_2, h_3}\otb \TT_{h_1, h_2, h_3} ((\underline{a_2^{-1}a_1}\triangleleft h_1)b_1, \; b_2,\; (\underline{a_2^{-1}a_3}\triangleleft h_3)b_3),
	\end{align*}
	and similarly for $a_3$.
\end{proof}

%\begin{remark}\label{rem:FPcomp_revisited}The condition \eqref{eq:FPcomp_TRUE} assumed in \cref{def:crossed} can be rewritten, using the definition of the additive structure in $\Nn\rcross{}{}\Hh\lcross{}{}\Nn$, as\begin{equation}\label{eq:FPcomp_quiv_rewritten}\begin{split}		&\points*{(\ell\triangleleft h_2^{-1})\otb h_1\otb b_1,\; (\ell\triangleleft h_2^{-1})\otb h_2\otb 1,\; (\ell\triangleleft h_2^{-1})\otb h_3\otb b_3}\\	&= h_2\cdot \points*{ \ell\otb h_2^{-1}h_1\otb b_1,\; \ell\otb 1\otb 1,\; \ell\otb h_2^{-1}h_3\otb b_3 }.\end{split} 	\end{equation} This is easily obtained from \[1\otb \points*{h_1,h_2, h_3}\otb \text{LHS}\eqref{eq:FPcomp_heap} = 1\otb \points*{h_1,h_2, h_3}\otb \text{RHS}\eqref{eq:FPcomp_heap}.\]\end{remark}
\begin{lemma}\label{lem:crossed_welldefined}
	The expression 
	\begin{align*}
		&a_1 \otb \points*{h_1, h_2, h_3} \otb \gamma_{\points*{h_1, h_2, h_3}}^{-1}\Big(\Big.\\
		& \quad \Big.\points*{ \varphi_{h_3}\varphi_{h_2}^{-1}\gamma_{h_1}(b_1),\, \varphi_{h_3}\varphi_{h_2}^{-1}\gamma_{h_2}((\underline{a_1^{-1}a_2}\triangleleft h_2)b_2),\, \gamma_{h_3}((\underline{a_1^{-1}a_3}\triangleleft h_3)b_3)   }\Big)
	\end{align*}
	is well-defined, i.e.\@ it does not depend on the chosen representatives $a_i\ot h_i\ot b_i$ of the equivalence classes $a_i\otb h_i\otb b_i$.
\end{lemma}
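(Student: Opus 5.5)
The relation $\sim$ defining $\Nn\otb\Hh\otb\Nn$ is generated by two elementary moves. So I will check that the expression is invariant under each move, applied to each of the three arguments separately. The two moves are: moving an element $\underline{c}\in\underline{\Nn}$ from the right factor $b_i$ across $h_i$ into $a_i$ (as $h_i\triangleright\underline{c}$), and moving an element $\underline{b}$ from the left factor $a_i$ across $h_i$ into $b_i$ (as $\underline{b}\triangleleft h_i$). The two generators are inverse to one another once one uses $\underline{a}\triangleleft h=h^{-1}\triangleright\underline{a}$. Hence it suffices to check the move
\[ a_i\underline{u}\ot h_i\ot b_i \;\sim\; a_i\ot h_i\ot (\underline{u}\triangleleft h_i) b_i \]
for $\underline{u}\in\underline{\Nn}$ (a loop at the relevant vertex of $\Hh^0$, since $a_i$ ends in $\Hh^0$).

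By \cref{lem:independentofa_1}, the operation can be written with any of $a_1,a_2,a_3$ in front. So to treat the argument with index $i$, I will use the formula in which some $a_j$ with $j\neq i$ is in front. Then $a_i$ appears only through $\underline{a_j^{-1}a_i}$, and $b_i$ only through $(\underline{a_j^{-1}a_i}\triangleleft h_i)b_i$. Under the move, $a_i\mapsto a_i\underline{u}$ and $b_i\mapsto(\underline{u}\triangleleft h_i)b_i$. The combination becomes
\[(\underline{a_j^{-1}a_i\,u}\triangleleft h_i)(\underline{u}\triangleleft h_i)b_i\]
after using that $\triangleleft h_i$ acts by groupoid homomorphisms (the bimodule algebra hypothesis, item (1) of \cref{def:crossed}), and so the combination is unchanged. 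One correction: the term above should read $(\underline{a_j^{-1}a_i u}\triangleleft h_i)$ with $u$ replaced by $u$ composed against $(u\triangleleft h_i)^{-1}$. The precise statement is that $\triangleleft h_i$ turns $a_j^{-1}a_i\underline{u}$ into $(\underline{a_j^{-1}a_i}\triangleleft h_i)(\underline{u}\triangleleft h_i)$. Matching this against $(\underline{a_j^{-1}a_i}\triangleleft h_i)\cdot(\underline{u}\triangleleft h_i)b_i$ shows invariance directly in the middle factor. The front factor $a_j$ is untouched, and so is everything else. This handles every index $i$, because for each $i$ we may choose $j\neq i$. It also handles the case where the move is applied to the front element itself.

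The front element also needs care: when the front $a_j$ is replaced, the underline expressions $\underline{a_j^{-1}a_i}$ change by $\underline{u}^{-1}$ on the left. For this reason I prefer the "choose a different front" trick above rather than a direct check. The only subtlety is that \cref{lem:independentofa_1} was proved for fixed representatives. Its equalities hold as equalities of classes in $\Nn\otb\Hh\otb\Nn$, so they can be applied to any choice of representatives, which is what the argument needs.

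One more point needs checking: the output must itself be read as a class. Changing the representative of the output is not an issue, since we only claim the class is well-defined.

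The main obstacle I expect is bookkeeping of domains. I must ensure that $\underline{a_j^{-1}a_i}$ is indeed a loop in $\underline{\Nn}$, which holds because $a_i,a_j$ both start at the same vertex and end in the unique $\Hh$-vertex of that component. I must also ensure that the actions are applied to arrows with the correct endpoints, so that the homomorphism property of $\triangleleft h_i$ on $\underline{\Nn}$ (item (1)) is legitimately applicable.
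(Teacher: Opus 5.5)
Your argument is correct, but it organizes the verification differently from the paper. The paper checks the elementary move on the argument that sits in front, comparing $a_1\underline{\ell}\otb h_1\otb b_1$ with $a_1\otb h_1\otb(\underline{\ell}\triangleleft h_1)b_1$. This forces $\underline{\ell}$ across the middle factor of the output, producing $(\underline{\ell}\triangleleft\points*{h_1,h_2,h_3})\TT_{h_1,h_2,h_3}(\dots)$, and that factor is then absorbed using \eqref{eq:FPcomp_TRUE}. The paper handles arguments $2$ and $3$ by invoking \cref{lem:independentofa_1} to put $a_2$ or $a_3$ in front and repeating the same computation.

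You do the opposite. You always keep the modified argument off the front, where it enters only through $(\underline{a_j^{-1}a_i}\triangleleft h_i)b_i$, so invariance is immediate from multiplicativity of $\triangleleft h_i$ on $\underline{\Nn}$. You use \cref{lem:independentofa_1} only to relocate the front.

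What your route buys is that the dependence on \eqref{eq:FPcomp_TRUE} is entirely encapsulated in \cref{lem:independentofa_1}, and no new computation is needed. Your observation also shows that the paper's appeal to the lemma for arguments $2$ and $3$ is superfluous: in the $a_1$-fronted formula those arguments are already trivially invariant. Conversely, the paper's front computation together with that trivial observation would prove the statement without \cref{lem:independentofa_1} at all. The essential content is the same in both routes: transport a loop across the middle factor of the output and absorb it with \eqref{eq:FPcomp_TRUE}.

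One expository point needs fixing. The sentence ``under the move, $a_i\mapsto a_i\underline{u}$ and $b_i\mapsto(\underline{u}\triangleleft h_i)b_i$'' and the first displayed combination describe changing both components at once, which is not a move. The subsequent ``correction'' sentence is garbled. What you should say is simply this: the representatives $a_i\underline{u}\ot h_i\ot b_i$ and $a_i\ot h_i\ot(\underline{u}\triangleleft h_i)b_i$ contribute $(\underline{a_j^{-1}a_iu}\triangleleft h_i)b_i$ and $(\underline{a_j^{-1}a_i}\triangleleft h_i)(\underline{u}\triangleleft h_i)b_i$ respectively. These coincide because $\triangleleft h_i$ is a groupoid homomorphism on $\underline{\Nn}$. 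That is exactly your ``precise statement'', so the proof stands.
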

\begin{proof} 
	We first check that, for every loop $\underline{\ell}$ on $\source(h_i)=\lambda$, the ternary operation applied to \[a_1\underline{\ell}\otb h_1\otb b_1,\; a_2\otb h_2\otb b_2,\; a_3\otb h_3\otb b_3,\] yields the same result as if applied to \[a_1\otb h_1\otb (\underline{\ell}\triangleleft h_1)b_1,\; a_2\otb h_2\otb b_2,\; a_3\otb h_3\otb b_3.\]
	One has
	\begin{align*}
		&\points*{a_1\underline{\ell}\otb h_1\otb b_1, a_2\otb h_2\otb b_2, a_3\otb h_3\otb b_3 }\\
		&=a_1\underline{\ell}\otb \points*{h_1, h_2, h_3}\otb \TT_{h_1, h_2, h_3} \Big(b_1, (\underline{\ell^{-1}a_1^{-1}a_2^{-1}}\triangleleft h_2)b_2,  (\underline{\ell^{-1}a_1^{-1}a_3^{-1}}\triangleleft h_3)b_3 \Big)\\
		& = a_1\otb \points*{h_1, h_2, h_3}\otb \\&\hspace{2em} (\underline{\ell}\triangleleft \points*{h_1, h_2, h_3}) \TT_{h_1, h_2, h_3}\Big(b_1, (\underline{\ell^{-1}a_1^{-1}a_2^{-1}}\triangleleft h_2)b_2,  (\underline{\ell^{-1}a_1^{-1}a_3^{-1}}\triangleleft h_3)b_3 \Big)\\
		&= a_1\otb \points*{h_1, h_2, h_3}\otb\\&\hspace{2em} \TT_{h_1, h_2, h_3} \Big( (\underline{\ell}\triangleleft h_1)b_1, (\underline{\ell}\triangleleft h_2)(\underline{\ell^{-1}a_1^{-1}a_2}\triangleleft h_2)b_2, (\underline{\ell}\triangleleft h_3) (\underline{\ell^{-1}a_1^{-1}a_3}\triangleleft h_3)b_3 \Big)\\
		&= a_1\otb \points*{h_1, h_2, h_3}\otb \TT_{h_1, h_2, h_3} \Big( (\underline{\ell}\triangleleft h_1) b_1, (\underline{a_1^{-1}a_2}\triangleleft h_2)b_2, (\underline{a_1^{-1}a_3}\triangleleft h_3)b_3\Big) \\
		&=\points*{a_1\otb h_1\otb (\underline{\ell}\triangleleft h_1)b_1, a_2\otb h_2\otb b_2, a_3\otb h_3\otb b_3}.
	\end{align*}
	We can use \cref{lem:independentofa_1} to rewrite the ternary operation in terms of $a_2$ or $a_3$: the same argument as above, then, shows that the ternary operation is well-defined under multiplying a loop to $a_2$ or $a_3$.
	
	We should now check that the ternary operation is well-defined under multiplying $b_i$ by a loop on the left. But every such loop $\underline{m}$ would be of the form $\underline{\ell}\triangleleft h_i$ for $\underline{\ell} = \underline{m}\triangleleft h_i^{-1}$, thus this verification can be made fall under the previously treated cases.
\end{proof}

\begin{proposition}\label{prop:crossed_is_qskb}
	\Cref{def:crossed} defines indeed a quiver skew brace.
\end{proposition}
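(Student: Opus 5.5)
The plan is to use the heap formulation \eqref{eq:heapdistr} of a quiver skew brace. We already know that $\Nn\rcross{}{}\Hh\lcross{}{}\Nn$ is a groupoid, and by \cref{lem:crossed_welldefined} the ternary operation is well defined on $\Nn\otb\Hh\otb\Nn$. So three things remain: (i) the ternary operation restricts to each outgoing star; (ii) it is a heap there (associativity and the two Mal'tsev identities); (iii) the left truss distributivity holds.

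For (i), take the three arguments $a_i\otb h_i\otb b_i$ with a common source $\source(a_1)=\source(a_2)=\source(a_3)$. Then the $a_i$ lie in one connected component of $\Nn$, so $\target(a_i)=\mu$ is the unique vertex of $\Hh$ in that component. Hence $\underline{a_1^{-1}a_j}$ is a loop at $\mu$, and the $h_i$ all lie in $\St_\Hh(\mu)$, so $\points*{h_1,h_2,h_3}$ is defined. The output has source $\source(a_1)$, as required.

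For (ii), use \cref{lem:independentofa_1} together with the relation $a_j\otb h_j\otb b_j = a_1\otb h_j\otb(\underline{a_1^{-1}a_j}\triangleleft h_j)b_j$. This lets us normalise all arguments to share the same first leg $a_1$. Once this is done, the first leg plays no role: the operation on the pairs $(h,b)$ is exactly the formula of \cref{lem:facchinipompili_heap}, with $\TT$ built from $\gamma,\varphi$ over the single star-group $\St_\Hh(\mu)$. The verification of associativity and the Mal'tsev identities for that formula is the same calculation as in Facchini and Pompili's digroup theorem \cite[Theorem 4.1]{FacchiniPompiliSemidirect}. It only uses that the $\varphi_h$ are additive and form an action of $(\St_\Hh(\mu),+)$, and that the $\gamma_h$ are additive bijections. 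These hypotheses are available starwise, since $\gamma$ is semistrong and $\gamma_{1}=\id$. I would redo this calculation with care for sources and targets rather than cite it. The only subtlety is that $\gamma_{h}$ maps $\St_\Nn(\target(h))$ to $\St_\Nn(\mu)$, so every term is compared inside $\St_\Nn(\mu)$.

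Step (iii) is the main obstacle. We must show that left multiplication by $x=c\otb k\otb d$ distributes over the heap bracket. Pick representatives with $\target(d)=\source(a_i)$ and compute $x\cdot(a_i\otb h_i\otb b_i)=c\otb kh_i\otb(\underline{da_i}\triangleleft h_i)b_i$. Normalise $a_1$ as in (ii) to absorb $\underline{da_1}$ into the third legs. Then compare the two sides:
\[ \points*{c\otb kh_1\otb b'_1,\ c\otb kh_2\otb b'_2,\ c\otb kh_3\otb b'_3} \]
against
\[ c\otb k\points*{h_1,h_2,h_3}\otb(\underline{da_1}\triangleleft\points*{h_1,h_2,h_3})\TT_{h_1,h_2,h_3}(\dots). \]
The middle legs agree by truss distributivity in $\Hh$. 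For the third legs, I would first extend \cref{lem:TTiskinvariant} to the quiver setting: its proof only uses the single-star identity $\TT_{h_1,h_2,h_3}(b_1,b_2,b_3)=\points*{h_1,h_2,h_3}^{-1}\points*{h_1b_1,h_2b_2,h_3b_3}$, which should follow from \eqref{eq:FPcomp_TRUE} and the module algebra axioms. This gives $\TT_{kh_1,kh_2,kh_3}=\TT_{h_1,h_2,h_3}$. After that, the factor $\underline{da_1}\triangleleft(\cdot)$ is pulled in and out of $\TT$ using \eqref{eq:FPcomp_TRUE} exactly as in the proof of \cref{lem:independentofa_1}.

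The delicate part is establishing the quiver version of \cref{lem:TTiskinvariant} without circularity. There, in the group case, distributivity in $H\ltimes N$ was assumed, whereas here it is what we want to prove. I would instead derive the $k$-invariance directly from \eqref{eq:FPcomp_TRUE}, using that $\gamma$ is a multiplicative action and the compatibility of $\varphi$ with $\triangleleft$. If this direct derivation fails, I would reduce to the one-vertex case: fix a maximal coarse subgroupoid as in \cref{rem:fixavertex} and transport the identity from a fixed star.
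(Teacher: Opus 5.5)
Your steps (i) and (ii) are fine, and your plan for (iii) follows the paper's: pull the factor $\underline{da_1}\triangleleft\points*{h_1,h_2,h_3}$ into $\TT$ via \eqref{eq:FPcomp_TRUE}, then use $\TT_{kh_1,kh_2,kh_3}=\TT_{h_1,h_2,h_3}$. The paper uses the latter silently, when it rewrites $a\otb\points*{hh_1,hh_2,hh_3}\otb\TT_{h_1,h_2,h_3}(\dots)$ as a bracket whose definition involves $\TT_{hh_1,hh_2,hh_3}$. You correctly identify this identity as the crux. Neither of your strategies can deliver it, however, because it does not follow from the hypotheses of \cref{def:crossed}. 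Unwinding $\TT$ using $\gamma_{kh}=\gamma_k\gamma_h$, the additivity of $\gamma_k$, and $k\points*{h_1,h_2,h_3}=\points*{kh_1,kh_2,kh_3}$, the identity is equivalent to
\[ \gamma_k\circ\varphi_x=\varphi_{k\rightharpoonup x}\circ\gamma_k\qquad\text{for all } k\in\Hh(\nu,\mu),\ x\in\St_\Hh(\mu). \]
This compatibility between $\varphi$ and $\gamma$ is assumed nowhere. \eqref{eq:FPcomp_TRUE} is vacuous when $\ell$ is a unit and only governs how loops of $\Nn$ pass through $\TT$, and there is no hypothesis relating $\varphi$ to $\triangleleft$ to draw on. The paper's route through \cref{lem:TTiskinvariant} is also circular: the `clear' implication \eqref{eq:FPcomp_heap_simplified}$\Rightarrow$\eqref{eq:FPcomp_heap} in \cref{lem:FPcomp_heap_simplified} already requires this invariance.

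Your fallback of reducing to one vertex cannot work either, since the one-vertex case already fails. Let $N=\Z/3\Z$ be a trivial brace. Let $H$ be the brace of \cref{ex:quasimorphism}: elements $0,1,2,3$, product given by addition mod $4$, additive group $\Z/2\Z\times\Z/2\Z$ via the bijection given there. Let $\triangleright$ and $\gamma$ be trivial, and set $\varphi_h=\id$ for $h\in\{0,1\}$ and $\varphi_h=-\id$ for $h\in\{2,3\}$; this is a homomorphism on $(H,+)$ because $\{0,1\}$ is an additive subgroup of index $2$. Since $N$ is abelian and $\triangleleft$ is trivial, both sides of \eqref{eq:FPcomp_TRUE} equal $\psi(b_1)-\psi(b_2)+\ell+b_3$ with $\psi=\varphi_{h_3}\varphi_{h_2}^{-1}$, so all hypotheses hold. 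Writing elements as pairs $(h,b)$, however,
\[ (1,0)\cdot\points*{(0,1),(0,0),(1,0)}=(1,0)\cdot(1,1)=(2,1)\neq(2,2)=\points*{(1,1),(1,0),(2,0)}. \]
The right-hand bracket is that of the three products $(1,0)(0,1)$, $(1,0)(0,0)$, $(1,0)(1,0)$, so distributivity fails. The failure is precisely $\varphi_1=\id\neq-\id=\varphi_3=\varphi_{1\rightharpoonup 1}$. Hence the proposition needs the displayed compatibility as an extra hypothesis; in one vertex it is the instance $a_1=1$ of \eqref{eq:FPcomp}. It does hold in the setting of \cref{thm:is_semidirect}, because $k\rightharpoonup(-x+a+x)=-(k\rightharpoonup x)+(k\rightharpoonup a)+(k\rightharpoonup x)$. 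Once it is assumed, your step (iii) goes through exactly as you outlined.
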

\begin{proof}
	We begin by checking that the ternary operation---which we now know being well-defined---is a heap structure. The Mal'tsev axiom
	\begin{align*}
		&\points*{a\otb h\otb b, a\otb h\otb b, a'\otb h'\otb b'}\\
		&= a\otb \points*{h,h,h'}\otb \TT_{h,h,h'}(b,( \underline{aa^{-1}}\triangleleft h)b, (\underline{a^{-1}a'}\triangleleft h')b')\\
		&= a\otb h'\otb \gamma^{-1}_{\points*{h,h,h'}}\Big( \points*{\gamma_h(b),\gamma_h(b),\gamma_{h'}((\underline{a^{-1}a'}\triangleleft h')b')}\Big)\\
		&= a\otb h'\otb \gamma^{-1}_{h'} \gamma_{h'}((\underline{a^{-1}a'}\triangleleft h')b')\\
		&= a\otb h' \otb ((\underline{a^{-1}a'}\triangleleft h')b')\\
		&= a\underline{a^{-1}a'}\otb h'\otb b'\\
		&= a'\otb h'\otb b'
	\end{align*} is readily verified, and same for the other Mal'tsev axiom. As for the associativity, one has (writing the ternary operation with respect to $a_3$ as in \cref{lem:independentofa_1})
	\begin{align*}
		&\points*{a_1\otb h_1\otb b_1, a_2\otb h_2\otb b_2, \points*{a_3\otb h_3\otb b_3, a_4\otb h_4\otb b_4, a_5\otb h_5\otb b_5} }\\
		&= a_3\otb \points*{h_1, h_2, \points*{h_3, h_4, h_5}}\otb \TT_{h_1, h_2, \points*{h_3, h_4, h_5}}\Big( (a_3^{-1}a_1\triangleleft)b_1, (a_3^{-1}a_2\triangleleft h_2)b_2, \Big.\\ & \hspace{2em}\Big.\TT_{h_3, h_4, h_5}\Big(  b_3, (a_3^{-1}a_4\triangleleft h_4)b_4, (a_3^{-1}a_5\triangleleft h_5)b_5\Big) \Big)\\
		&= a_3\otb \points*{h_1, h_2, \points*{h_3, h_4, h_5}}\otb \gamma^{-1}_{\points*{ h_1, h_2, \points*{h_3, h_4, h_5}}}\Big(\Big.\\
		& \hspace{2em} \Big\langle \varphi_{\points*{h_3, h_4, h_5}}\varphi^{-1}_{h_2} \gamma_{h_1} ((a_3^{-1}a_1\triangleleft h_1)b_1),\;  \varphi_{\points*{h_3, h_4, h_5}}\varphi^{-1}_{h_2} \gamma_{h_2} ((a_3^{-1}a_2\triangleleft h_2)b_2), \Big.\\
		& \hspace{2em} \gamma_{\points*{h_3,h_4,h_5}}\gamma^{-1}_{\points*{h_3,h_4,h_5}}\Big( \big\langle \varphi_{h_5}\varphi^{-1}_{h_4}\gamma_{h_3}(b_3), \big.\Big.\Big.\\ &\hspace{2em}\Big.\Big.\big. \varphi_{h_5}\varphi^{-1}_{h_4}\gamma_{h_4}((a_3^{-1}a_4\triangleleft h_4)b_4),\; \varphi_{h_5}\varphi^{-1}_{h_4}\gamma_{h_5}((a_3^{-1}a_5\triangleleft h_5)b_5)  \big\rangle\Big) \Big\rangle\Big)\\
		%%%%%%%%%%%
		&= a_3\otb \points*{\points*{h_1, h_2, h_3}, h_4, h_5}\otb \gamma^{-1}_{\points*{\points*{h_1, h_2, h_3}, h_4, h_5}}\Big(\Big.\\
		& \hspace{2em} \Big\langle \big\langle  \varphi_{\points*{h_3, h_4, h_5}}\varphi^{-1}_{h_2} \gamma_{h_1} ((a_3^{-1}a_1\triangleleft h_1)b_1),\;  \varphi_{\points*{h_3, h_4, h_5}}\varphi^{-1}_{h_2} \gamma_{h_2} ((a_3^{-1}a_2\triangleleft h_2)b_2), \big.\Big.\\
		& \hspace{2em}\big. \varphi_{h_5}\varphi^{-1}_{h_4}\gamma_{h_3}(b_3) \big\rangle\Big.\Big. ,\\ &\hspace{2em}\Big.\Big. \varphi_{h_5}\varphi^{-1}_{h_4}\gamma_{h_4}((a_3^{-1}a_4\triangleleft h_4)b_4),\; \varphi_{h_5}\varphi^{-1}_{h_4}\gamma_{h_5}((a_3^{-1}a_5\triangleleft h_5)b_5)  \Big\rangle\Big)\\
		%%%%%%%%%%%
		&= a_3\otb \points*{\points*{h_1, h_2, h_3}, h_4, h_5}\otb \gamma^{-1}_{\points*{\points*{h_1, h_2, h_3}, h_4, h_5}}\Big(\Big.\\
		& \hspace{2em} \Big\langle \big\langle  \varphi_{h_5}\varphi^{-1}_{h_4}\varphi_{h_3}\varphi^{-1}_{h_2} \gamma_{h_1} ((a_3^{-1}a_1\triangleleft h_1)b_1),\;  \varphi_{h_5}\varphi^{-1}_{h_4}\varphi_{h_3}\varphi^{-1}_{h_2} \gamma_{h_2} ((a_3^{-1}a_2\triangleleft h_2)b_2), \big.\Big.\\
		& \hspace{2em}\big. \varphi_{h_5}\varphi^{-1}_{h_4}\gamma_{h_3}(b_3) \big\rangle\Big.\Big. ,\\ &\hspace{2em}\Big.\Big. \varphi_{h_5}\varphi^{-1}_{h_4}\gamma_{h_4}((a_3^{-1}a_4\triangleleft h_4)b_4),\; \varphi_{h_5}\varphi^{-1}_{h_4}\gamma_{h_5}((a_3^{-1}a_5\triangleleft h_5)b_5)  \Big\rangle\Big)\\
		%%%%%%%%%%%%%%%%%
		%%%%%%%%%%%%%%%%%
		%%%%%%%%%%%%%%%%%
		&= a_3\otb \points*{\points*{h_1, h_2, h_3}, h_4, h_5}\otb \gamma^{-1}_{\points*{\points*{h_1, h_2, h_3}, h_4, h_5}}\Big(\Big.\\
		& \hspace{2em} \Big\langle  \varphi_{h_5}\varphi^{-1}_{h_4}\gamma_{\points*{h_1, h_2, h_3}}\gamma^{-1}_{\points*{h_1, h_2, h_3}} \big(\big.\Big.\\
		&\hspace{2em}\big.\big\langle
		\varphi_{h_3}\varphi^{-1}_{h_2} \gamma_{h_1} ((a_3^{-1}a_1\triangleleft h_1)b_1), \;   \varphi_{h_3}\varphi^{-1}_{h_2} \gamma_{h_2} ((a_3^{-1}a_2\triangleleft h_2)b_2), \; \gamma_{h_3}(b_3) \big\rangle \big), \\
	 &\hspace{2em}\Big.\Big. \varphi_{h_5}\varphi^{-1}_{h_4}\gamma_{h_4}((a_3^{-1}a_4\triangleleft h_4)b_4),\; \varphi_{h_5}\varphi^{-1}_{h_4}\gamma_{h_5}((a_3^{-1}a_5\triangleleft h_5)b_5)  \Big\rangle\Big)\\
	 &= \points*{\points*{a_1\otb h_1\otb b_1, a_2\otb h_2\otb b_2, a_3\otb h_3\otb b_3}, a_4\otb h_4\otb b_4, a_5\otb h_5\otb b_5}.
	\end{align*}
	Finally, we prove that the groupoid multiplication distributes with respect to this heap structure (we assume the composition to be well-defined):
	\begin{align*}
		&(a\otb h\otb b)\cdot \points*{ a_1\otb h_1\otb b_1,\; a_2\otb h_2\otb b_2,\;  a_3\otb h_3\otb b_3  }\\
		&= (a\otb h\otb b)\cdot \Big( a_1\otb \points*{h_1, h_2, h_3}\otb \TT_{h_1, h_2, h_3}(b_1, b_2, b_3) \Big)\\
		&= a\otb h\points*{h_1, h_2, h_3}\otb \big( ba_1\triangleleft  \points*{h_1, h_2, h_3}\big) \TT_{h_1, h_2, h_3}(b_1, b_2, b_3)\\
		\overset{\eqref{eq:FPcomp_TRUE}}&{=} a\otb \points*{hh_1, hh_2, hh_3}\otb \TT_{h_1, h_2, h_3}\big( (ba_1\triangleleft h_1)b_1, (ba_1\triangleleft h_2)b_2, (ba_1\triangleleft h_3)b_3 \big)\\
		&= \points*{ a\otb hh_1\otb (ba_1\triangleleft h_1)b_1,\;  a\otb hh_2\otb (ba_1\triangleleft h_2)b_2, a\otb hh_3\otb (ba_1\triangleleft h_3)b_3}\\
		&=  \points*{ (a\otb h\otb b)(a_1\otb h_1\otb b_1),\; (a\otb h\otb b)(a_2\otb h_2\otb b_2),\;  (a\otb h\otb b)(a_3\otb h_3\otb b_3)  }.
	\end{align*}
\end{proof}

We finally claim that this is the categorical semidirect product in $\QSKB$.
\begin{theorem}\label{thm:is_semidirect}
	For every split short exact sequence of quiver skew braces
	\[\begin{tikzcd}
		{\One_{\Gg^0}} & \Nn & \Gg & \Hh & 1.
		\arrow[from=1-1, to=1-2]
		\arrow[from=1-2, to=1-3]
		\arrow[from=1-3, to=1-4, "f", shift left=1]
		\arrow[from=1-4, to=1-3, "s", shift left=1]
		\arrow[from=1-4, to=1-5]
	\end{tikzcd}\]
	one has $\Gg \cong \Nn\rcross{}{}\Hh\lcross{}{}\Nn$ as quiver skew braces.
	
	The actions $\triangleright,\triangleleft$ are given by the conjugation of loops in $\Gg$. The right actions $\varphi$ are the additive right-conjugations in the additive star-groups: $\varphi_h(a)= -h+a+h$. Finally, the map $\gamma$ is the restriction of the action $\rightharpoonup$ in $\Gg$, namely $\gamma_h(a) = -h + ha$ for all $h\in \Hh^1$, $a\in \St_{\Nn}(\target(h))$.
\end{theorem}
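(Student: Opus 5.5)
The plan is to reduce to the groupoid case and then check that the additive structures match. By the result of \cite{ferri2025splitandFIT}, the underlying split short exact sequence of groupoids already gives an isomorphism of groupoids
\[\Phi\colon \Nn\rcross{}{}\Hh\lcross{}{}\Nn\to \Gg,\qquad a\otb h\otb b\mapsto a\, s(h)\, b,\]
where $\Nn$ is identified with the kernel of $f$, the vertex set $\Hh^0$ is identified with its image under $s^0$, and $\triangleright,\triangleleft$ are conjugation of loops by $s(h)$. So it suffices to show two things. First, that the data $\varphi,\gamma$ named in the statement satisfy the hypotheses of \cref{def:crossed}. Second, that $\Phi$ transports the ternary operation of \cref{def:crossed} to the star-heaps of $\Gg$. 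Since $\Phi$ is then a groupoid isomorphism that preserves every heap $\points*{\blank,\blank,\blank}_\lambda$, and $1_\lambda$ is the neutral element on both sides, it is an isomorphism of quiver skew braces.

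For the first part, I would argue as follows, identifying $h$ with $s(h)$ throughout.
\begin{itemize}
\item The kernel $\Nn$ is an ideal of $\Gg$. By \eqref{eq:weak_ideal_comm}, $\varphi_h(a)=-h+a+h$ preserves $\St_\Nn(\lambda)$, and it is clearly a right action by additive automorphisms.
\item By \eqref{eq:weak_ideal_invar}, $\gamma_h(a)=h\rightharpoonup a=-h+ha$ lands in $\St_\Nn(\source(h))$. It is a semistrong action by additive homomorphisms because $\rightharpoonup$ is an action by \eqref{bg1}, and because $h\rightharpoonup(a+b)=h\rightharpoonup a+h\rightharpoonup b$ follows from \eqref{qtbc}.
\item The operator $\TT$ has the intrinsic description already used in the proof of \cref{lem:TTiskinvariant}:
\[\TT_{h_1,h_2,h_3}(b_1,b_2,b_3)=\points*{h_1,h_2,h_3}^{-1}\points*{h_1b_1,h_2b_2,h_3b_3}.\]
I would verify this directly in $\Gg$ by expanding $\gamma$ and $\varphi$ and using that $h\mapsto s(h)$ is additive. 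With this description, \eqref{eq:FPcomp_TRUE} follows from the left truss distributivity \eqref{eq:heapdistr}. One writes $h_i(\ell\triangleleft h_i)=\ell h_i$ for a loop $\ell$ and pulls $\ell$ out of the heap bracket.
\end{itemize}

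For the second part, take three elements $a_i\otb h_i\otb b_i$ in the same star, and represent them all starting at $a_1$ by absorbing $\underline{a_1^{-1}a_i}$ into the $b$-component, as in \cref{lem:independentofa_1}. Their images are $a_1 h_i b_i'$, and \eqref{eq:heapdistr} gives
\[\points*{a_1h_1b_1',a_1h_2b_2',a_1h_3b_3'}=a_1\points*{h_1b_1',h_2b_2',h_3b_3'}=a_1\,\points*{h_1,h_2,h_3}\,\TT_{h_1,h_2,h_3}(b_1',b_2',b_3'),\]
which is exactly $\Phi$ of the ternary operation in \cref{def:crossed}.

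The main obstacle is the intrinsic formula for $\TT$ in the quiver setting. One has to check carefully that the expanded product $\gamma^{-1}\points*{\varphi\varphi^{-1}\gamma(\cdot),\dots}$ collapses to $\points*{h_1,h_2,h_3}^{-1}\points*{h_1b_1,h_2b_2,h_3b_3}$, with every step well-defined on stars. I would first try to mimic \cref{lem:facchinipompili_heap}: compute the sum $h_1b_1-h_2b_2+h_3b_3$ in $\St_\Gg(\lambda)$ via $hb=h+\gamma_h(b)$ and $x+h=h+\varphi_h(x)$, then factor out $\points*{h_1,h_2,h_3}$ on the left using $g\cdot y=g+\gamma_g(y)$.
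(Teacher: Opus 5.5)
Your proposal is correct and follows essentially the paper's own proof. Like the paper, you take the groupoid isomorphism from the earlier work, use that $\Nn$ is an ideal to obtain $\varphi$ and $\gamma$, pull $a_1$ out of the heap bracket by \eqref{eq:heapdistr}, and establish $\points*{h_1b_1,h_2b_2,h_3b_3}=\points*{h_1,h_2,h_3}\,\TT_{h_1,h_2,h_3}(b_1,b_2,b_3)$ by expanding $\gamma_h(a)=-h+ha$ and $\varphi_h(a)=-h+a+h$. You also derive \eqref{eq:FPcomp_TRUE} and the additivity and action properties of $\gamma$ explicitly, which the paper leaves implicit; note that the action property of $\rightharpoonup$ comes from its definition or from \eqref{eq:minus}, not from \eqref{bg1}.
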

\begin{proof}
	For simplicity, let us identify $\Hh$ with its isomorphic image via $s$, which is a sub-quiver skew brace of $\Gg$. First of all, from \cite[Theorem 5.4]{ferri2025splitandFIT} we know that $\Gg\cong \Nn\rcross{}{}\Hh\lcross{}{}\Nn$ as groupoids, precisely with the actions $\triangleright,\triangleleft$ given above. Moreover, since the kernel $\Nn$ of $f$ is an ideal, we know that $(\St_\Nn(\lambda), +)$ is normal in $(\St_\Gg(\lambda), +)$, thus in particular the additive right-conjugation by $h$ restricts to a group homomorphism $\varphi_h\colon \St_\Nn(\lambda)\to \St_\Nn(\lambda)$ for all $\lambda\in\Hh^0$. This is the desired additive right action.
	
	As in \cite[\S 5.2]{ferri2025splitandFIT}, every element of $\Gg$ can be written as a product $ahb$, $a,b\in\Nn^1$, $h\in \Hh^1$. The triple $a\ot h\ot b$ is not unique, but the class $a\otb h\otb b$ is.
	
	We now tackle the hardest part, namely we need to describe the additive structure of $\Gg$ in terms of the additive structures of $\Nn$ and $\Hh$, and prove that this yields exactly the additive structure of the categorical semidirect product.
	
	We begin by writing $a_1 h_1 b_1 - a_2 h_2 b_2 + a_3h_3b_3$ as
	\begin{align}\nonumber
		a_1 h_1 b_1 - a_2 h_2 b_2+ a_3h_3b_3&= a_1 h_1 b_1 - a_1\underline{a_1^{-1}a_2} h_2 b_2+ a_1\underline{a_1^{-1}a_3}h_3b_3 \\ \nonumber
		&= a_1(  h_1 b_1 - \underline{a_1^{-1}a_2} h_2 b_2+ \underline{a_1^{-1}a_3}h_3b_3 )\\ \label{eq:bring_a_1_out}
		&= a_1 \big(  h_1 b_1 - h_2 (\underline{a_1^{-1}a_2}\triangleleft h_2) b_2+ h_3(\underline{a_1^{-1}a_3}\triangleleft h_3)b_3 \big),
	\end{align}
	where the last step follows from the fact that every connected component of $\Nn$ contains exactly one vertex of $\Hh$, hence $\target(a_1) = \target(a_2) = \target(a_3)\in \Hh^0$, thus $\underline{a_1^{-1}a_2}$ and $\underline{a_1^{-1}a_3}$ are loops, on which $h_2$ and $h_3$ can act. Of course, instead of bringing $a_1$ out of the parentheses, we could have chosen to do so with $a_2$ or $a_3$ (\textit{a posteriori}, this will justify \cref{lem:independentofa_1}).
	
	Observe that all three terms of $h_1 b_1 - h_2 (\underline{a_1^{-1}a_2}\triangleleft h_2) b_2+ h_3(\underline{a_1^{-1}a_3}\triangleleft h_3)b_3$ are the product of an arrow in $\Hh$ with an arrow in $\Nn$. 
	
	The next step, then, is being able to write an element of the form 
	\[ \points*{h_1a_1, h_2a_2, h_3a_3}=h_1a_1 - h_2a_2 + h_3 a_3 \]
	as a product $ha$, for $h, h_i \in \Hh^1$, $a, a_i \in \Nn^1$. One trivially has
	\[ \points*{h_1a_1, h_2a_2, h_3a_3} = \points*{h_1, h_2, h_3} \points*{h_1, h_2, h_3}^{-1} \points*{h_1a_1, h_2a_2, h_3a_3},\]
	where $\points*{h_1, h_2, h_3}$ lies in $\Hh$. It is easy to observe that $\points*{h_1, h_2, h_3}^{-1} \points*{h_1a_1, h_2a_2, h_3a_3}$ lies in $\Nn = \ker(f)$, since $a_1, a_2, a_3$ lie in $\ker(f)$, and hence
	\begin{align*}&f^1 (\points*{h_1, h_2, h_3}^{-1} \points*{h_1a_1, h_2a_2, h_3a_3})\\&= \points*{f^1(h_1), f^1(h_2), f^1(h_3)}^{-1}\points*{f^1(h_1a_1), f^1(h_2a_2), f^1(h_3a_3)}\\
		&= \points*{f^1(h_1), f^1(h_2), f^1(h_3)}^{-1}\points*{f^1(h_1)f^1(a_1), f^1(h_2)f^1(a_2), f^1(h_3)f^1(a_3)}\\
	&= \points*{f^1(h_1), f^1(h_2), f^1(h_3)}^{-1}\points*{f^1(h_1), f^1(h_2), f^1(h_3)}\\
	&= 1.\end{align*}
	Using $\gamma_h(a) = -h+ha$ and $\varphi_h(a) = -h+a+h$, we compute
	\begin{align*}
		&\points*{h_1, h_2, h_3} \gamma^{-1}_{\points*{h_1, h_2, h_3}} \Big( \varphi_{h_3}(\varphi{h_2})^{-1}\gamma_{h_1}(a_1) - \varphi_{h_3}(\varphi_{h_2})^{-1}\gamma_{h_2}(a_2) + \gamma_{h_3}(a_3) \Big)\\
		&= \points*{h_1, h_2, h_3}\gamma^{-1}_{\points*{h_1, h_2, h_3}} \Big( -h_3 + h_2 - h_1 + h_1a_1-h_2 +h_3 -h_3 \Big.\\ & \hspace{12em}\Big.+h_2 -h_2a_2 +h_2 - h_2 +h_3 - h_3 + h_3a_3 \Big)\\
		&= \points*{h_1, h_2, h_3}\gamma^{-1}_{\points*{h_1, h_2, h_3}} \Big( -\points*{h_1, h_2, h_3} + \points*{h_1a_1, h_2a_2, h_3a_3}\Big)\\
		&= \points*{h_1, h_2, h_3} \Big( -\points*{h_1, h_2, h_3}^{-1} + \points*{h_1, h_2, h_3}^{-1} \big(-\points*{h_1, h_2, h_3} \big.\Big.\\ &\hspace{20em}\Big.\big. +\points*{h_1a_1, h_2a_2, h_3a_3}  \big) \Big)\\
		\overset{\eqref{eq:minus}}&{=}\points*{h_1, h_2, h_3} \Big( -\points*{h_1, h_2, h_3}^{-1}  + \points*{h_1, h_2, h_3}^{-1} \Big.\\ & \hspace{7em}\Big.- \points*{h_1, h_2, h_3}^{-1}\points*{h_1, h_2, h_3} +\points*{h_1, h_2, h_3}^{-1}\points*{h_1a_1, h_2a_2, h_3a_3}  \Big)\\
		&= \points*{h_1, h_2, h_3} \points*{h_1, h_2, h_3}^{-1}\points*{h_1a_1, h_2a_2, h_3a_3} \\
		&= \points*{h_1a_1, h_2a_2, h_3a_3}. 
	\end{align*}
	Therefore, the heap operation $\points*{h_1a_1, h_2a_2, h_3a_3}$ is expressed exactly as in \cref{def:crossed}. We finally need to express $\points*{a_1h_1b_1, a_2h_2b_2, a_3h_3b_3}$ as in \cref{def:crossed}. From the above, we have
	\begin{align*}
		&\points*{a_1h_1b_1, a_2h_2b_2, a_3h_3b_3}\\
		\overset{\eqref{eq:bring_a_1_out}}&{=} a_1 \points*{  h_1 b_1,\; h_2 (\underline{a_1^{-1}a_2}\triangleleft h_2) b_2,\; h_3(\underline{a_1^{-1}a_3}\triangleleft h_3)b_3 }\\
		&= a_1  \points*{h_1, h_2, h_3} \gamma_{\points*{h_1, h_2, h_3}}^{-1}\Big(\Big.\\
		& \quad \Big.\points*{ \varphi_{h_3}(\varphi_{h_2})^{-1}\gamma_{h_1}(b_1),\, \varphi_{h_3}(\varphi_{h_2})^{-1}\gamma_{h_2}((\underline{a_1^{-1}a_2}\triangleleft h_2)b_2),\, \gamma_{h_3}((\underline{a_1^{-1}a_3}\triangleleft h_3)b_3)   }\Big)
	\end{align*}
	which is the same expression appearing in \cref{def:crossed}, where $a_1$ lies in $\Nn$, $\points*{h_1, h_2, h_3}$ lies in $\Hh$, and 
	\begin{align*}&\gamma_{\points*{h_1, h_2, h_3}}^{-1}\Big( \Big.\Big\langle \varphi_{h_3}(\varphi_{h_2})^{-1}\gamma_{h_1}(b_1),\Big.\Big.\\ &\Big.\Big.\hspace{8em} \varphi_{h_3}(\varphi_{h_2})^{-1}\gamma_{h_2}((\underline{a_1^{-1}a_2}\triangleleft h_2)b_2),\; \gamma_{h_3}((\underline{a_1^{-1}a_3}\triangleleft h_3)b_3)   \Big\rangle\Big)\end{align*}
	lies in $\Nn$, as desired.
\end{proof}

\begin{remark} Suppose given a split short exact sequence as in \cref{thm:is_semidirect}. Observe that $f^1$ and $s^1$ restrict to group homomorphisms 
	\[ \begin{tikzcd}
		\St_\Gg(a)\ar[r,"{f^1}"] & \St_\Hh(f^1(a))\ar[r, "{s^1}"]&
		\St_\Gg(s^0f^0 (a)),
	\end{tikzcd}  \] 
	but $a\neq s^0 f^0 (a)$ in general---and hence, they do not necessarily induce split epimorphisms of groups. More precisely,  $s^0 f^0 (a)$ is the unique vertex of $\Hh^0$ that lies in the same connected component of $\Nn$ as $a$.\end{remark}

\begin{remark}\label{rem:inevitable}
	The entire reason why we use a heap-theoretic viewpoint in this paper, lies in \eqref{eq:bring_a_1_out}. In general, we would not know how to rewrite a simple sum $a h_1b_1 + a h_2b_2$ as a product $ahb$. We can only do so if $a$ is a loop on $\source(a) \in \Hh^0$. Indeed, the product $\cdot$ does \textit{not} distribute with respect to sums, but with respect to the heap operation. Thus, a group-theoretic formulation of the additive structure for $\Nn\rcross{}{}\Hh\lcross{}{}\Nn$ seems prohibitive. On the contrary, since we know how to rewrite $ah_1b_1 - ah_2 b_2 + ah_3 b_3$ in the form $ahb$, a heap-theoretic formulation of the additive structure for $\Nn\rcross{}{}\Hh\lcross{}{}\Nn$ comes as natural.
	
	The bottom line is: we would not be using heaps in this paper, if it were not, to the best of our knowledge, indispensable. 
	
	We do not exclude that a purely group-theoretic formulation may exist, and we would be greatly interested in knowing about it.   
\end{remark}

In the special case when $\lambda\in\Hh^0$, we can give a clean group-theoretic description of $+_\lambda$, without having to resort to heaps.

\begin{remark}\label{rem:sum_decomposition}
	Given $h,k\in\Hh^1$ and $a,b\in\Nn^1$ such that $ha+kb$ exists, one has 
	\[ ha+kb = (h+k)n \]
	where 
	\[n = \gamma_{h+k}^{-1} (\varphi_k\gamma_h(a) + \gamma_k(b)). \]
	The computations are immediate, and they are actually a subcase of the computations in the proof of \cref{thm:is_semidirect}.
	
	Observe that one can also write
	\begin{align*} n &= (h+k)^{-1}(ha+kb)\\
	\overset{(\dagger)}&{=} \big(h(h^{-1}\rightharpoonup k)\big)^{-1} (ha+kb)\\
	\overset{(\ddagger)}&{=} (h^{-1}\leftharpoonup k)k^{-1} (ha+kb) \\
	\overset{\eqref{qtbc}}&{=}(h^{-1}\leftharpoonup k)k^{-1} h a - (h^{-1}\leftharpoonup k)k^{-1} + (h^{-1}\leftharpoonup k) b, \end{align*}
	where the step marked with $(\dagger)$ follows from $h+k = h(h^{-1}\rightharpoonup k)$, see \cref{prop:braided_to_qtsb}, and the step marked with $(\ddagger)$ follows from $(h^{-1}\rightharpoonup k) = (h^{-1}\leftharpoonup k)k^{-1}h$, which in turn is an easy consequence of \eqref{eq:braidedcomm}. This alternative form for $n$ has an intuitive visual interpretation: defining \[x= a^{-1}h^{-1}\rightharpoonup k, \quad y = (a\leftharpoonup x)^{-1} \big( h\leftharpoonup (a\rightharpoonup x)\big)^{-1}\rightharpoonup b,\] one has $ha+kb = haxy$, and $(h+k)n$ is visualised as the diagonal of the grid in \cref{fig:tiles}.
\end{remark}
\begin{figure}[t]
	\begin{tikzpicture}[x=1.5cm, y=1.5cm,node font=\tiny]
		\draw[-Stealth] (0,0) to node[sloped, above]{$h$} (1,2);
		\draw[-Stealth] (1,2) to node[sloped, above]{$a$} (1.5,3);
		\draw[-Stealth] (0,0) to node[sloped, below]{$k$} (2,0);
		\draw[-Stealth] (2,0) to node[sloped, below]{$b$} (3,0);
		\draw[-Stealth,dotted] (0,0) to node[sloped, above]{$h+k$} (3,2);
		\draw[-Stealth] (2,0) to node[sloped, below]{$h\leftharpoonup(a\rightharpoonup x)$} (3,2);
		\draw[-Stealth] (1,2) to node[sloped, above]{$a\rightharpoonup x$} (3,2);
		\draw[-Stealth] (1.5,3) to node[sloped, above]{$x$} (3.5,3);
		\draw[-Stealth] (3.5,3) to node[sloped, above]{$y$} (4.5,3);
		\draw[-Stealth] (3,2) to node[sloped, above]{$a\leftharpoonup x$} (3.5,3);
		\draw[-Stealth] (3,2) to node[sloped, below]{$(a\leftharpoonup x)\rightharpoonup y$} (4,2);
		\draw[-Stealth,dotted] (3,2) to node[above, sloped]{$n$} (4.5,3);
		\draw[-Stealth] (4,2) to node[sloped, below]{$a\leftharpoonup xy$} (4.5,3);
		\draw[-Stealth] (3,0) to node[sloped, below]{$ (h\leftharpoonup (a\rightharpoonup x))\leftharpoonup ((a\leftharpoonup x)\rightharpoonup y) $} (4,2);
	\end{tikzpicture}
	\caption{Reference picture for \cref{rem:sum_decomposition}. Here each square tile represents an application of the braiding $r$, but the figure is not symmetric (unless $r$ is involutive): thus the tiles should be understood as oriented, with application of $r$ going from the top left to the bottom right.}\label{fig:tiles}
\end{figure}
As a consequence of the above considerations, when $\Nn = \underline{\Nn}= \Nn^\circlearrowright$, we can describe the `bilateral' semidirect product $\Nn\rcross{}{}\Hh\lcross{}{}\Nn$ as a `unilateral' semidirect product.
\begin{definition}
	Consider the setting of \cref{def:crossed} where $\Nn = \underline{\Nn}= \Nn^\circlearrowright$. We define $\Hh\lcross{}{}\Nn$ as the quiver $\Hh\ot \Nn$ with the groupoid structure
	\[ (h\ot a)\cdot ( k\ot b) =  hk\ot a  (b\triangleleft h) \]
	(see \cite[Definition 5.9]{ferri2025splitandFIT}) and the additive structure
	\[   (h\ot a) + (k\ot b) = (h+k)\ot \gamma_{h+k}^{-1}(\varphi_k\gamma_h(a)+\gamma_k(b)).\]
\end{definition}
\begin{proposition}\label{prop:onesidedsemidirect}
	With the above structures, $\Hh\lcross{}{}\Nn$ is a quiver skew brace, and it is isomorphic to $\Nn\rcross{}{}\Hh\lcross{}{}\Nn$.
\end{proposition}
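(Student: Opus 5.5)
The plan is to avoid checking the quiver skew brace axioms on $\Hh\lcross{}{}\Nn$ directly. Instead, I will build an explicit bijection of quivers $\Phi\colon \Hh\ot\Nn\to \Nn\rcross{}{}\Hh\lcross{}{}\Nn$ that carries both the multiplication and the additive structure across. The quiver skew brace structure then transports from \cref{prop:crossed_is_qskb}.

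The candidate is $\Phi(h\ot a)= 1_{\source(h)}\otb h\otb a$. Since $\Nn=\Nn^\circlearrowright$ consists of loops, every connected component of $\Nn$ is a single vertex. So $\Hh^0=\Nn^0$, $\underline{\Nn}=\Nn$, and the relation $\sim$ lets us move any left factor $a\underline{b}$ through $h$ to the right: $a\otb h\otb d = 1\otb h\otb (a\triangleleft h)d$.

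For surjectivity, every class therefore has a representative of the form $1\otb h\otb b$. For injectivity, the generating relations of $\sim$ preserve the invariant $h\ot (a\triangleleft h)\,\underline{b}$, more precisely the element $h(a\triangleleft h)b$ computed in the groupoid. Equivalently, one can quote \cite[Definition 5.9]{ferri2025splitandFIT} and the corresponding result there, which already identifies $\Hh\lcross{}{}\Nn$ with $\Nn\rcross{}{}\Hh\lcross{}{}\Nn$ as groupoids via $\Phi$. With that, $\Phi$ is an isomorphism of groupoids and the multiplicative part is settled.

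For the additive part, I will compare $\Phi((h\ot a)+(k\ot b))$ with the group operation $+_\lambda$ on $\St(\lambda)$ of the crossed product, where $\lambda=\source(h)=\source(k)$. The sum is recovered from the heap as $x+y=\points*{x,1_\lambda,y}$, and the unit $1_\lambda$ corresponds to $1\otb 1_\lambda\otb 1$. Applying \cref{def:crossed} with $a_1=a_2=a_3=1$ gives
\[ 1\otb \points*{h,1,k}\otb \gamma^{-1}_{\points*{h,1,k}}\points*{\varphi_k\gamma_h(a),\ \varphi_k\varphi_1^{-1}\gamma_1(1),\ \gamma_k(b)}. \]
Since $\varphi_1$ and $\gamma_1$ are identities and additive maps send $1$ to $1$, the middle entry is the additive neutral element. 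Hence the bracket reduces to $\varphi_k\gamma_h(a)+\gamma_k(b)$ and $\points*{h,1,k}=h+k$. This is exactly $\Phi\big((h+k)\ot\gamma_{h+k}^{-1}(\varphi_k\gamma_h(a)+\gamma_k(b))\big)$, which matches \cref{rem:sum_decomposition}.

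Because $\Phi$ is a bijection preserving multiplication and each star-group operation (stars correspond, since $\Phi$ is identity on vertices), $\Hh\lcross{}{}\Nn$ is a quiver skew brace by \cref{prop:crossed_is_qskb}, and $\Phi$ is an isomorphism.

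The main obstacle is the injectivity of $\Phi$, that is, that $1\otb h\otb a=1\otb h\otb a'$ forces $a=a'$. My first attempt will be to define an inverse $\Psi(a\otb h\otb d)=h\ot (a\triangleleft h)d$ and check that it is constant on each generating relation of $\sim$, using that $\triangleleft$ acts by groupoid homomorphisms and the action axioms. If needed, the alternative is to cite the groupoid-level isomorphism from \cite{ferri2025splitandFIT}.
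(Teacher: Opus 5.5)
Your proposal is correct and is essentially the paper's argument. The paper also identifies each class $a\otb h\otb b$ with its unique representative $1\otb h'\otb b'$, citing \cite[Lemma 5.10]{ferri2025splitandFIT} where you write down the explicit inverse $\Psi(a\otb h\otb d)=h\ot(a\triangleleft h)d$. It then obtains the structure on $\Hh\lcross{}{}\Nn$ by restricting that of $\Nn\rcross{}{}\Hh\lcross{}{}\Nn$ via \cref{lem:crossed_welldefined,prop:crossed_is_qskb}, and your heap computation with $a_1=a_2=a_3=1$, $h_2=1_\lambda$ is the additive check it leaves implicit.

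One small caution: the product that $\Phi$ actually transports is $(1\otb h\otb a)(1\otb k\otb b)=1\otb hk\otb(a\triangleleft k)b$, that is, $(h\ot a)(k\ot b)=hk\ot(a\triangleleft k)b$. This differs from the displayed $hk\ot a(b\triangleleft h)$, which is not well-typed as written, so relying on the cited groupoid-level identification is the right move. Likewise, the injectivity invariant should be $\Psi$ as a map to pairs, not a product `computed in the groupoid', since no ambient groupoid is available yet.
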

\begin{proof}
	The good definition follows immediately from \cref{lem:crossed_welldefined,prop:crossed_is_qskb} applied on triples $a\ot h\ot b$ with $a = 1_{\source(h)}$. If $\Nn$ is a bundle of loops, it is clear that every element $a\otb h\otb b$ in $\Nn\otb \Hh\otb \Nn$ can be rewritten as $1\otb h'\otb b'$ for unique $h'\in \Hh^1$, $b'\in \Nn^1$ (this is the same argument as in \cite[Lemma 5.10]{ferri2025splitandFIT}): this yields the desired isomorphism $\Nn\rcross{}{}\Hh\lcross{}{}\Nn\cong \Hh\lcross{}{}\Nn$.
\end{proof}
\begin{corollary}\label{cor:splitlemma_strong}
	Given a split strong epimorphism of quiver skew braces $\Gg\to \Hh$, with kernel $\Nn = \Nn^\circlearrowright$, one has $\Gg \cong \Hh\lcross{}{}\Nn$ as quiver skew braces.
\end{corollary}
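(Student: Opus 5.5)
The corollary should follow by composing \cref{thm:is_semidirect} with \cref{prop:onesidedsemidirect}. The work is to check that a split strong epimorphism fits the hypotheses of both.

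Let $f\colon \Gg\to\Hh$ be the split strong epimorphism with section $s$, and set $\Lambda=\Gg^0=\Hh^0$. Since $f^0=\id_\Lambda$ and $f^0s^0=\id$, also $s^0=\id_\Lambda$, so $s$ is a strong morphism. By the proposition characterising kernels, $\Nn=\ker f$ is an ideal bundle, hence a bundle of loops with $\Nn^0=\Lambda$. The hypotheses of \cref{def:crossed} then hold trivially, because $\Hh^0=\Lambda=\Nn^0$ and every connected component of $\Nn$ is a single vertex, so it contains exactly one vertex of $\Hh$. Moreover $\underline{\Nn}=\Nn^\circlearrowright(\Hh^0,\Hh^0)=\Nn$.

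Next I would apply \cref{thm:is_semidirect} to the split short exact sequence $\One_\Lambda\to\Nn\to\Gg\rightleftarrows\Hh\to 1$. This gives an isomorphism $\Gg\cong\Nn\rcross{}{}\Hh\lcross{}{}\Nn$ of quiver skew braces. The data are $\triangleright,\triangleleft$ given by conjugation, $\varphi_h(a)=-h+a+h$, and $\gamma_h(a)=-h+ha$. The compatibility \eqref{eq:FPcomp_TRUE} is not an extra assumption here: it holds automatically because these data come from the ambient quiver skew brace $\Gg$. Indeed, the distributivity computation at the end of the proof of \cref{prop:crossed_is_qskb}, read backwards inside $\Gg$, is exactly \eqref{eq:FPcomp_TRUE}. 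I would make this remark explicit, since it is the only point where anything could go wrong.

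Finally, since $\Nn=\underline{\Nn}=\Nn^\circlearrowright$, \cref{prop:onesidedsemidirect} gives $\Nn\rcross{}{}\Hh\lcross{}{}\Nn\cong\Hh\lcross{}{}\Nn$. Composing the two isomorphisms yields $\Gg\cong\Hh\lcross{}{}\Nn$. Concretely, $g\mapsto s f(g)\ot (sf(g))^{-1}g$, using that $(sf(g))^{-1}g$ is a loop in $\ker f$ because $f$ is strong. The additive structure then matches \cref{rem:sum_decomposition}, which serves as a direct sanity check.

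The main obstacle is making sure that the hypothesis \eqref{eq:FPcomp_TRUE} in \cref{def:crossed} is genuinely satisfied by the induced data, so that $\Hh\lcross{}{}\Nn$ is well defined. I would handle it as described: compute both sides of \eqref{eq:FPcomp_TRUE} as elements of $\Gg$ using truss distributivity \eqref{eq:heapdistr}. Everything else is bookkeeping.
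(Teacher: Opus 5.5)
Your proposal is correct and matches the paper's proof, which is exactly the conjunction of \cref{thm:is_semidirect} and \cref{prop:onesidedsemidirect}. Your extra checks are valid details the paper leaves implicit:
\begin{itemize}
\item $s$ is automatically strong;
\item $\Nn$ is a bundle of loops, so the hypotheses of \cref{def:crossed} hold with $\underline{\Nn}=\Nn$;
\item \eqref{eq:FPcomp_TRUE} follows from truss distributivity in $\Gg$ via $\TT_{h_1,h_2,h_3}(b_1,b_2,b_3)=\points*{h_1,h_2,h_3}^{-1}\points*{h_1b_1,h_2b_2,h_3b_3}$.
\end{itemize}
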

\begin{proof}
	It is the conjunction of \cref{thm:is_semidirect,prop:onesidedsemidirect}.
\end{proof}

\section{Prunability}

\subsection{The main difference between $\maybebm{\Gpd}$ and $\maybebm{\QSKB}$}\label{sec:harder}  As it was observed e.g.\@ in \cite{GroupsToGroupoidsBrown}, every connected groupoid $\Gg$ fits in a short exact sequence
\[\begin{tikzcd}
	{\One_{\Gg^0}} & {\Gg^\circlearrowright} & \Gg & {\Gg/\Gg^\circlearrowright} & 1,
	\arrow[from=1-1, to=1-2]
	\arrow[from=1-2, to=1-3]
	\arrow[from=1-3, to=1-4]
	\arrow[from=1-4, to=1-5]
\end{tikzcd}\]
and choosing a maximal Schurian subgroupoid $\widehat{\Lambda}$ of $\Gg$ is equivalent to choosing a quiver-theoretic section $s\colon \Gg/\Gg^\circlearrowright \to \Gg$. Such a quiver-theoretic section can always be chosen so that it is also a morphism of groupoids. The choice, however, is generally not unique. By the Lifted Split Lemma in $\Gpd$ (see \cite{ferri2025splitandFIT}), this induces the decomposition $\Gg\cong \Gg^\circlearrowright \rcross{}{} (\Gg/\Gg^\circlearrowright)\lcross{}{}\Gg^\circlearrowright\cong \Gg^\circlearrowright \rcross{}{} (\Gg/\Gg^\circlearrowright)$.\footnote{At a first glance it seems like both terms of the decomposition $\Gg\cong \Gg^\circlearrowright \rcross{}{} (\Gg/\Gg^\circlearrowright)$ are independent of any choice. However, as remarked e.g.\@ in \cite{ferri2025splitandFIT}, this decomposition requires the choice of a maximal Schurian subgroupoid, and such an arbitrary choice cannot be avoided. This may look like a contradiction, but it is readily explained: it is the action $\triangleright$ of $\Gg/\Gg^\circlearrowright$ on $\Gg^\circlearrowright$, and hence the groupoid structure of the crossed product, that depends on how $\Gg/\Gg^\circlearrowright$ is immersed into $\Gg$. And the immersion of $\Gg/\Gg^\circlearrowright$ is precisely the choice of a maximal Schurian subgroupoid.} The same result in a slightly different flavour, applied to groupoid algebras, appears in \cite{DAdderioHautekietSaraccoVercruysse,DokuchaevExelPiccione,SteinbergSemigroupRepTheory,velasco2021thesis} and many other places.

We now switch to the category $\QSKB$. We have two `extremal' classes of quiver skew braces:
\begin{itemize}
	\item[(I)] skew braces, or more generally bundles of skew braces; and
	\item[(II)] coarse quiver skew braces (which are equivalent to heaps).
\end{itemize}%Every quiver skew brace lies somewhere between two extrema:\begin{itemize}	\item[(I)] bundles of skew braces; and 	\item[(II)] coarse quiver skew braces $\widehat{\Lambda}$.\end{itemize}A quiver skew brace structure on the coarse groupoid $\widehat{\Lambda}$ is the same as a group (or, better, a \textit{heap}) structure on the set of vertices $\Lambda$ (see \cite{FerriShibukawa} or \cite[Proposition 2.32]{ferri2024dynamical}). Thus an object of type (II) is `essentially' a group. On the other hand, an object of type (I) is a bundle of objects that, despite being less understood than groups, are still widely studied. For this reason, w
We would like to fit any connected quiver skew brace $\Gg$ into a short exact sequence as above, where $\Gg^\circlearrowright$ is an object of type (I), and $\Gg/\Gg^\circlearrowright$ is of type (II). This classification program fails immediately. The problem is that, while for a groupoid $\Gg$ the bundle of loops $\Gg^\circlearrowright$ is always a normal subgroupoid, for a quiver skew brace $\Gg$ it is generally false that $\Gg^\circlearrowright$ be an ideal.
\begin{example}\label{ex:unprunable}
	Consider the quiver skew brace $\Gg= \Kk_{2,3}$ from \cite[Example 4.29]{ferri2024dynamical}
	\[\begin{tikzcd}
		S_2 && S_3
		\arrow["0", from=1-1, to=1-1, loop, in=150, out=210, distance=5mm]
		\arrow["3", from=1-1, to=1-1, loop, in=140, out=220, distance=15mm]
		\arrow["2", bend left=10, from=1-1, to=1-3]
		\arrow["1", bend left=35, from=1-1, to=1-3]
		\arrow["2", bend left=10, from=1-3, to=1-1]
		\arrow["3", bend left=35, from=1-3, to=1-1]
		\arrow["0", from=1-3, to=1-3, loop, in=330, out=30, distance=5mm]
		\arrow["1", from=1-3, to=1-3, loop, in=320, out=40, distance=15mm]
	\end{tikzcd}\]
	where the unique arrow with source $S_i$ and label $a$ is denoted by $[S_i\Vert a]$, and the additive group structure on $\Gg(S_2, \Gg^0)$ is induced by the usual addition on $\Z/4\Z$:
	\[ [S_2\Vert a] +_{S_2} [S_2 a\Vert b] = [S_2\Vert a+b], \]
	and the groupoid structure is described in \cite[Table 3]{ferri2024dynamical} through left-quasigroup operations $\bullet_{S_2}, \bullet_{S_3}$. The left action $x\rightharpoonup y = -_{\source(x)}x+_{\source(x)}xy$, for $x = [S_2\Vert 2]$, sends the loop $[S_2\Vert 3]$ into the arrow
	\[[S_2\Vert 2]\rightharpoonup [S_2\Vert 3] = [S_2\Vert -2] + [S_2\Vert 2\bullet_{S_2} 3] = [S_2\Vert 1],   \] which is not a loop. Thus $\Gg^\circlearrowright$ is not invariant under the action $\rightharpoonup$, and hence it is not an ideal.
	
	Incidentally, this implies that $\Kk_{2,3}$ has no nontrivial ideals (we call such a quiver skew brace \textit{simple}). Thus $\Kk_{2,3}$ is the smallest simple quiver skew brace that is neither of type (I) nor of type (II).
\end{example} 
We set some terminology to talk about decompositions of quiver skew braces.
\begin{definition}
	A quiver skew brace $\Gg$ is \textit{prunable} if it has an ideal bundle $\Ii \neq \One_\Gg$. It is \textit{unprunable} otherwise. It is \textit{completely prunable} if $\Gg^\circlearrowright$ is an ideal.
\end{definition}

Clearly, simple quiver skew braces are unprunable. The converse does not hold. 
\begin{example}\label{ex:unprunable_nonsimple}
	Let $\Kk_{2,3}$ be as in \cref{ex:unprunable}, and $\Lambda = \{ 0,1\} = \Z/2\Z$ with its usual heap structure $\points*{a,b,c} = a-b+c$. We prove that $\Kk_{2,3} \times \widehat{\Lambda}$ is an unprunable quiver skew brace which is not simple. It is not simple because $\Ii= \Kk_{2,3}\times \One_{\widehat{\Lambda}}$ is a proper ideal by \cref{prop:ideal_of_prod}. Again by \cref{prop:ideal_of_prod}, this ideal $\Ii$ is also isomorphic to $\Kk_{2,3}\times \One_{\widehat{\Lambda}}$ as a quiver skew brace, and hence $\Ii^\circlearrowright = \Kk_{2,3}^\circlearrowright \sqcup \Kk_{2,3}^\circlearrowright$ cannot be an ideal, because $\Kk_{2,3}^\circlearrowright$ was not an ideal in $\Kk_{2,3}$ in the first place. Therefore, $\Kk_{2,3} \times \One_{\widehat{\Lambda}}$ is unprunable. Since  $(\Kk_{2,3} \times \widehat{\Lambda})^\circlearrowright = (\Kk_{2,3} \times \One_{\widehat{\Lambda}})^\circlearrowright$, one has \textit{a fortiori} that $\Kk_{2,3} \times \widehat{\Lambda}$ is unprunable.
\end{example}

Classifying unprunable or simple (finite) quiver skew braces, at the moment, seems far beyond our reach. The following problems seem more easily approachable, but they are better left to future investigation.
\begin{problem}
	Determine all possible underlying groupoids of the unprunable quiver skew braces.
\end{problem}
\begin{problem}
	Determine all possible underlying groupoids of the simple quiver skew braces.
\end{problem}

\subsection{Completely prunable quiver skew braces} Let $\Gg$ be connected and completely prunable. This means that $\Gg/\Gg^\circlearrowright$ is also a quiver skew brace,  and hence $\Gg^0$ has a heap structure $\points*{\blank,\blank,\blank}$.
\begin{lemma}\label{lem:prunablesplit}
	If $\Gg$ is completely prunable and connected, then the short exact sequence of quiver skew braces
	\[\begin{tikzcd}
		{\One_{\Gg^0}} & {\Gg^\circlearrowright} & \Gg & {\Gg/\Gg^\circlearrowright} & 1,
		\arrow[from=1-1, to=1-2]
		\arrow[from=1-2, to=1-3]
		\arrow[from=1-3, to=1-4]
		\arrow[from=1-4, to=1-5]
	\end{tikzcd}\]
	splits.
\end{lemma}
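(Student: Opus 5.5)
The plan is to build an explicit section $s\colon \Gg/\Gg^\circlearrowright\to\Gg$ and check that it is a morphism of quiver skew braces. Its image should be a wide coarse subgroupoid $\Hh\subseteq\Gg$ that is also a sub-quiver skew brace. Since $\Gg$ is connected, $\Gg/\Gg^\circlearrowright$ is the coarse groupoid $\widehat{\Lambda}$ on $\Lambda=\Gg^0$, with the heap structure induced by \cref{prop:quotient_is_QSKB}. So the section is determined by choosing one arrow $s([\lambda,\mu])\in\Gg(\lambda,\mu)$ for every pair $\lambda,\mu$, subject to two requirements. The chosen arrows must compose correctly, and every star $\St_\Hh(\lambda)$ must be closed under the heap $\points*{\blank,\blank,\blank}_\lambda$. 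Strictly, $s$ is then a quasimorphism of quiver skew braces. It is a genuine morphism in the sense of \cref{def:morphisms}, because $\widehat{\Lambda}$ is coarse: two arrows of the quotient are consecutive exactly when their lifts in $\Hh$ are.

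\textbf{Step 1: reduce to one star.} Fix a vertex $\zeta$, in the spirit of \cref{rem:fixavertex}. Suppose $T\subseteq\St_\Gg(\zeta)$ contains exactly one arrow $t_\lambda\colon\zeta\to\lambda$ for each $\lambda\in\Lambda$, with $t_\zeta=1_\zeta$. Setting $s([\lambda,\mu])=t_\lambda^{-1}t_\mu$ then automatically gives a coarse subgroupoid, and $f\circ s=\id$. By the left truss distributivity \eqref{eq:heapdistr},
\[\points*{t_\lambda^{-1}t_\mu,\,t_\lambda^{-1}t_\lambda,\,t_\lambda^{-1}t_\nu}_\lambda = t_\lambda^{-1}\points*{t_\mu,t_\lambda,t_\nu}_\zeta .\]
By the same identity, closure of every star $\St_\Hh(\lambda)$ under the heap reduces to one condition: $T$ is a subheap of $(\St_\Gg(\zeta),\points*{\blank,\blank,\blank}_\zeta)$ containing $1_\zeta$. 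Equivalently, $T$ is an additive subgroup of $(\St_\Gg(\zeta),+_\zeta)$.

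\textbf{Step 2: identify the cosets.} As in the proof of \cref{prop:quotient_is_QSKB}, the cosets of the normal subgroup $\Gg_\zeta=\St_{\Gg^\circlearrowright}(\zeta)$ in $(\St_\Gg(\zeta),+)$ are exactly the sets $\Gg(\zeta,\lambda)$. The lemma is therefore equivalent to the statement that the group extension
\[\Gg_\zeta\hookrightarrow(\St_\Gg(\zeta),+)\twoheadrightarrow(\Lambda,+_\zeta)\]
admits a group-theoretic complement $T$.

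\textbf{Step 3: construct the complement (main obstacle).} An arbitrary group extension need not split, so the multiplicative structure has to be used here. My first attempt would exploit that $\Gg^\circlearrowright$ is an ideal, so it is invariant under both $\rightharpoonup$ (by definition) and $\leftharpoonup$ (\cref{lem:right_invariant}). This means the braiding descends to the coarse quotient, where by the remark on coarse groupoids it reads $[a,b]\ot[b,c]\mapsto[a,\points*{a,b,c}]\ot[\points*{a,b,c},c]$. I would then take a maximal coarse subgroupoid $\Hh_0$, as in \cref{rem:fixavertex}, and try to correct its arrows $[\zeta,\lambda]$ by loops, $t_\lambda=[\zeta,\lambda]\ell_\lambda$, so that the defect
\[ -t_{\points*{\mu,\zeta,\nu}}+t_\mu+t_\nu\in\Gg_\zeta\]
vanishes. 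This defect is a $2$-cocycle on $(\Lambda,+_\zeta)$ with values in $\Gg_\zeta$. The hope is to show that it is a coboundary by writing it via \eqref{eq:minus} and $\gamma_h(a)=-h+ha$, as in \cref{rem:sum_decomposition}: in that form the sum $t_\mu+t_\nu=t_\mu(t_\mu^{-1}\rightharpoonup t_\nu)$ is controlled by the action $\rightharpoonup$, which preserves loops. The aim would be to choose the $\ell_\lambda$ recursively, correcting $[\zeta,\lambda]$ by the loop part of $[\zeta,\lambda]^{-1}\rightharpoonup(\cdot)$. I expect this cocycle-trivialisation step to be where the real difficulty lies. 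If it fails in general, I would first check small examples such as the quotients of semidirect products from \cref{prop:classicalsemidirect} to see what extra input is needed.
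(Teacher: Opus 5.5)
\textbf{Verdict.} Your Steps 1 and 2 are correct, and they carry the real content. Together they show that the sequence splits in $\QSKB$ if and only if the loop group $\Gg_\zeta$ has a complement in $(\St_\Gg(\zeta),+_\zeta)$. Step 3, however, is only a hope, so the proposal does not prove the lemma. No argument can complete it, because the statement is false in this generality.

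\textbf{Counterexample.} Take vertex set $\Z/2\Z$ and arrows $(x,n)\colon x\to x+\bar n$ for $x\in\Z/2\Z$ and $n\in\Z/4\Z$, where $\bar n$ is the reduction mod $2$. Compose by $(x,n)(x+\bar n,m)=(x,n+m)$, and take star-groups $(x,n)+_x(x,m)=(x,n+m)$. For $a=(x,n)$, $b=(x+\bar n,m)$, $c=(x+\bar n,k)$, both sides of \eqref{qtbc} equal $(x,n+m+k)$, because $\Z/4\Z$ is abelian. So this is a connected quiver skew brace: the action groupoid of the trivial brace $\Z/4\Z$ acting on $\Z/2\Z$. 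The loops $\Gg_x=\{(x,0),(x,2)\}$ form a normal additive subgroup of each star. Moreover $(x,n)\rightharpoonup(x+\bar n,k)=-(x,n)+(x,n+k)=(x,k)$ is a loop for $k\in\{0,2\}$. Hence $\Gg^\circlearrowright$ is an ideal and $\Gg$ is completely prunable. Yet $(\St_\Gg(0),+_0)\cong\Z/4\Z$, and $\Gg_0$ is its subgroup of order $2$, which has no complement. Indeed, both arrows $t\in\Gg(0,1)=\{(0,1),(0,3)\}$ satisfy $t+_0t=(0,2)\neq 1_0$. Equivalently, a section that is a morphism of quiver skew braces would restrict to a homomorphism $\Z/2\Z\to\Z/4\Z$ splitting the reduction mod $2$, and no such homomorphism exists.

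\textbf{Why Step 3 cannot work.} In this example your cocycle is the non-trivial class of $\Z/4\Z$ as an extension of $\Z/2\Z$ by $\Z/2\Z$. Correcting $t_1$ by a loop does not change $t_1+_0t_1$. So no choice of correcting loops, however you extract them from $\rightharpoonup$, can help: the obstruction is purely additive.

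\textbf{Where the paper's proof goes wrong.} The paper avoids this obstruction rather than resolving it. After choosing the arrows $x_{\zeta,a}$, it \emph{defines} $x_{\zeta,a}+_\zeta x_{\zeta,b}:=x_{\zeta,\points*{a,\zeta,b}}$. That is, it equips the coarse subgroupoid $\Uu$ with a new additive structure, instead of checking that $\Uu$ is closed under the given $+_\zeta$ of $\Gg$. This makes $\Uu$ abstractly isomorphic to $\Gg/\Gg^\circlearrowright$, but it does not make $s$ a morphism into $\Gg$. That missing property is exactly the complement condition your Step 2 isolates, and it fails above.

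\textbf{What you have actually proved.} Your Steps 1--2 establish the corrected statement: the sequence splits if and only if $\Gg_\zeta$ has a complement in $(\St_\Gg(\zeta),+_\zeta)$. Without such a hypothesis, both the lemma and the subsequent corollary $\Gg\cong\Gg/\Gg^\circlearrowright\lcross{}{}\Gg^\circlearrowright$ fail.
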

\begin{proof}
	Let $\zeta$ be a chosen vertex in $\Gg^0$. For each $a\in \Gg^0$, we choose an arrow $x_{\zeta, a}\colon \zeta\to a$. This choice can easily be extended to a section of groupoids $s\colon \Gg/\Gg^\circlearrowright\to \Gg$, as it is well known, by setting $s^1([\zeta, a])= x_{\zeta, a}$ and $s^1([a,b]) = x_{a,b} = [\zeta, a]^{-1}[\zeta, b]$. Let $\Uu$ be the coarse subgroupoid of $\Gg$ thus obtained.
	
	We define $x_{\zeta, a}+_\zeta x_{\zeta, b} = x_{\zeta, \points*{a,\zeta, b}}$. We now prove that there is a unique way of extending $+_\zeta$ to $\{ +_a\mid a\in \Gg^0\}$ so that $\Uu$ becomes a quiver skew brace. The request
	\begin{align*} x_{\zeta, a} (x_{a,b}+_a x_{a,c}) &= x_{\zeta, a} x_{a,b} -_\zeta x_{\zeta, a} +_\zeta x_{\zeta, a}x_{a,c} \\
		&= x_{\zeta, b} -_\zeta x_{\zeta, a} +_\zeta x_{\zeta, c}\\
		&= x_{\zeta, \points*{ b, \zeta , \points*{ \points*{\zeta, a, \zeta}, \zeta, c } }}\\
		&= x_{\zeta, \points*{b,a,c}} \end{align*}
	implies $x_{a,b}+_a x_{a,c} = x_{\zeta, a}^{-1}x_{\zeta, \points*{b,a,c}} = x_{a,\points*{b,a,c}}$. This is the only way to extend $+_\zeta$ to a quiver skew brace structure. 
	
	We now need to check that this extension is indeed a quiver skew brace structure. First, we observe that $+_a$ is a group operation on $\{ x_{a,b}\mid b\in \Gg^0 \}$: this follows immediately from the fact that the operation $\cdot_a$ on $\Gg^0$, given by $b\cdot_a c = \points*{b,a,c}$, is a group operation. Finally, we observe that the quiver skew brace compatibility holds everywhere, and not just when the starting vertex is $\zeta$: this verification is immediate, and analogous to the computations done above for $\zeta$.
\end{proof}
From now on, we fix a section $s$ and we simply identify $x_{a,b}$ with $[a,b]$.
\begin{remark}
	Fix $\zeta \in \Gg^0$, so that $\Gg^0$ becomes a group with $a\cdot b = \points*{a,\zeta, b}$; and let $G= \Gg_\zeta $. Every $x \in \St_\Gg(\zeta)$ can be written uniquely as $g [\zeta,a]$ for $g\in G$ and $a\in \Gg^0$: indeed for $x\colon \zeta \to a$ one has that $x [\zeta, a]^{-1} = g\in G$ is a loop on $\zeta$.
	
	Since $\Gg$ is connected and we have chosen a coarse subgroupoid $\Uu$, every isotropy group $\Gg_a$ is canonically isomorphic to $G$ via
	\[ \Gg_a\to \Gg_\zeta, \quad x\mapsto [\zeta, a] x [\zeta, a]^{-1}.  \]
	Thus, with a similar trick as above, every $x\in \Gg(a,b)$ is entirely determined by the datum of $[a,b]\in \Uu$ and $g\in G$, where $g =  [\zeta, a] x[a,b]^{-1} [\zeta, a]^{-1}=  [\zeta, a] x [b,\zeta] $.  This yields a bijection between $\Gg^1$ and $G\times \Gg^0 \times \Gg^0$.
\end{remark}
\begin{corollary}
	If $\Gg$ is a completely prunable quiver skew brace, then $\Gg$ is isomorphic to $ \Gg/\Gg^\circlearrowright \lcross{}{}\Gg^\circlearrowright$ as quiver skew braces.
\end{corollary}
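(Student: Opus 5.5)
The plan is to apply \cref{cor:splitlemma_strong}, the strong split lemma, once three facts are in place: the quotient map is a strong epimorphism, it is split, and its kernel is a bundle of loops.

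First, since $\Gg$ is completely prunable, $\Gg^\circlearrowright$ is an ideal. It is a bundle of loops, hence an ideal bundle. By \cref{prop:quotient_is_QSKB}, the projection $\pi\colon\Gg\to\Gg/\Gg^\circlearrowright$ is therefore a \emph{strong} morphism of quiver skew braces over $\Gg^0$. Its kernel is $\Nn=\Gg^\circlearrowright$, which satisfies $\Nn=\underline{\Nn}=\Nn^\circlearrowright$.

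Second, I handle the connected case first. Here \cref{lem:prunablesplit} supplies a section $s$ whose image is the coarse sub-quiver skew brace $\Uu$. I must check that $s$ is a morphism of quiver skew braces, and in particular strong. On vertices $s^0=\id$ holds by construction, since $x_{a,b}\colon a\to b$. On arrows $s$ is multiplicative, because $x_{a,b}x_{b,c}=x_{a,c}$. It is additive, because $x_{a,b}+_a x_{a,c}=x_{a,\points*{b,a,c}}$ while $[a,b]+_a[a,c]=[a,\points*{b,a,c}]$ in the coarse quiver skew brace $\Gg/\Gg^\circlearrowright$. The one point to confirm is that the heap structure $\points*{\blank,\blank,\blank}$ on $\Gg^0$ used in \cref{lem:prunablesplit} really is the heap induced by the quotient quiver skew brace, as in the remark on coarse groupoids. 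This holds because $\pi(x_{\zeta,a}+_\zeta x_{\zeta,b})=\pi(x_{\zeta,a})+\pi(x_{\zeta,b})$. Finally $\pi s=\id$ is immediate, since $\pi(x_{a,b})=[a,b]$.

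Third, with a split strong epimorphism whose kernel is a bundle of loops, \cref{cor:splitlemma_strong} yields $\Gg\cong \Gg/\Gg^\circlearrowright\lcross{}{}\Gg^\circlearrowright$. The actions are the ones described in \cref{thm:is_semidirect}: conjugation for $\triangleleft$, additive conjugation for $\varphi$, and $\rightharpoonup$ for $\gamma$.

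The main obstacle is that the statement does not assume connectedness. For a disconnected $\Gg$, I will split it into its connected components $\Gg=\bigsqcup_i\Gg_i$. Each star lies in a single component, so each $\Gg_i$ is a sub-quiver skew brace. Each $\Gg_i^\circlearrowright$ is an ideal of $\Gg_i$, because invariance and normality are conditions star by star. The quotient and the one-sided semidirect product are both computed componentwise. I then choose a section on each component via \cref{lem:prunablesplit} and take the disjoint union of these sections. This union is still a strong morphism, so the argument above applies globally. If that bookkeeping becomes delicate, I would instead note that \cref{lem:prunablesplit} never uses connectedness beyond choosing a base vertex, and rerun its proof with one base vertex per component.
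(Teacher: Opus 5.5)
Your overall route is the paper's route: its proof is just \cref{cor:splitlemma_strong} combined with \cref{lem:prunablesplit}. Your first and third steps are fine. The gap is in the second step, where you claim $s$ is additive because $x_{a,b}+_a x_{a,c}=x_{a,\points*{b,a,c}}$.

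In the proof of \cref{lem:prunablesplit} that identity is a \emph{definition}. It equips the coarse groupoid $\Uu$ with an abstract quiver skew brace structure. But $\Gg$ already carries an addition $+_\zeta$ on $\St_\Gg(\zeta)$, and nothing shows that the two agree on $\Uu$. For $s$ to be a morphism into $\Gg$, the chosen arrows $x_{\zeta,a}$ must form an additive subgroup of $\St_\Gg(\zeta)$. Your check via $\pi$ only shows that the $\Gg$-sum $x_{\zeta,a}+_\zeta x_{\zeta,b}$ lies in $\Gg(\zeta,\points*{a,\zeta,b})$. That is, it equals $\ell\,x_{\zeta,\points*{a,\zeta,b}}$ for some loop $\ell\in\Gg_\zeta$, but nothing forces $\ell=1_\zeta$. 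What you actually need is that the additive extension $\Gg_\zeta\hookrightarrow\St_\Gg(\zeta)\twoheadrightarrow\St_{\Gg/\Gg^\circlearrowright}(\zeta)$ splits, and complete prunability does not force this. The same flaw is present in the paper's \cref{lem:prunablesplit}.

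This step cannot be repaired, because the statement itself fails. Take $\Gg^0=\{0,1\}$ and $\Gg^1=\Z/2\Z\times\Z/4\Z$, with $(x,g)\colon x\to x+\bar g$, where $\bar g$ is $g$ mod $2$. Set $(x,g)(x+\bar g,h)=(x,g+h)$ and $(x,g)+_x(x,h)=(x,g+h)$.
\begin{itemize}
\item Then \eqref{qtbc} reduces to $g+h+k=(g+h)-g+(g+k)$ in $\Z/4\Z$.
\item The loops at $x$ are $(x,0)$ and $(x,2)$, and $(x,g)\rightharpoonup(x+\bar g,2j)=(x,2j)$. So $\Gg^\circlearrowright$ is an ideal, and $\Gg$ is connected and completely prunable.
\item Every section of $\pi$ is strong, since $\pi^0=\id$. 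It would therefore restrict to an additive homomorphism $\Z/2\Z\cong\St_{\Gg/\Gg^\circlearrowright}(0)\to\St_\Gg(0)\cong\Z/4\Z$ sending $[0,1]$ to $(0,1)$ or $(0,3)$. Both have order $4$, so this is impossible.
\item Worse, consider any $\Gg/\Gg^\circlearrowright\lcross{}{}\Gg^\circlearrowright$. There $\gamma_h$ and $\varphi_k$ fix $1$, and units act trivially. Hence $[0,1]\ot(1,0)$ and $[0,0]\ot(0,2)$ are two distinct elements of additive order $2$. So the order-$4$ star-groups are not cyclic, and cannot be isomorphic to $\St_\Gg(0)\cong\Z/4\Z$.
\end{itemize}

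Your argument, including the harmless disconnected bookkeeping, does go through under an extra hypothesis. Suppose that for one vertex $\zeta$ in each component, $\Gg_\zeta$ has an additive complement $T$ in $\St_\Gg(\zeta)$. Choose $x_{\zeta,a}\in T$ and $x_{a,b}=x_{\zeta,a}^{-1}x_{\zeta,b}$. Left truss distributivity \eqref{eq:heapdistr} then gives $x_{a,b}+_a x_{a,c}=x_{\zeta,a}^{-1}\points*{x_{\zeta,b},x_{\zeta,a},x_{\zeta,c}}_\zeta\in\Uu$. So $\Uu$ is a genuine sub-quiver skew brace of $\Gg$, and $s$ is additive.
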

\begin{proof}
	It follows from \cref{cor:splitlemma_strong} and the above considerations.
\end{proof}
As it happens for $\Gpd$, the isomorphism $\Gg\cong \Gg/\Gg^\circlearrowright \lcross{}{}\Gg^\circlearrowright$ is not canonical, and it depends on the choice of a section $\Gg/\Gg^\circlearrowright\to \Gg$, that is, of a maximal Schurian sub-quiver skew brace of $\Gg$.
\section*{Acknowledgements} 
The author is grateful to Ilaria Colazzo for her many remarks and questions, and for her insight on the content of this paper. Moreover, this work benefited from conversations with Alessandro Ardizzoni, Geoffrey Janssens, Isabel Martin-Lyons, Paolo Saracco, Hal Simpson, and Leandro Vendramin. 

The author was funded by the Università di Torino though a PNRR DM 118 scholarship; by the Vrije Universiteit Brussel through the bench fee OZR3762; and by Leandro Vendramin through his FWO Senior Research Project G004124N.

\begin{CJK}{UTF8}{gbsn}
\bibliographystyle{acm}
\bibliography{../../refs}
\end{CJK}
\end{document}